\newcommand{\C}{\mathbb{C}}
\newcommand{\N}{\mathbb{N}}
\newtheorem{thm}{Theorem}[section]
\newtheorem{corollary}[thm]{Corollary}
\newtheorem{definition}[thm]{Definition}
\newtheorem{example}[thm]{Example}
\newtheorem{lemma}[thm]{Lemma}
\newtheorem{proposition}[thm]{Proposition}
\newtheorem{remark}[thm]{Remark}
\newtheorem*{proposition*}{Proposition}
\newtheorem{notation}[thm]{Notation}
\newtheorem*{lemma*}{Lemma}
\newtheorem*{definition*}{Definition}
\newtheorem{problem}[thm]{Problem}
\newtheorem{introthm}{Theorem}
\newtheorem{introprop}[introthm]{Proposition}
\title[Hypercube C*-Algebras]{Hypercube C*-Algebras and an Application to Magic Isometries}
\subjclass[2020]{46L35, 47C15}
\date{\today}
\author{Bj\"orn Sch\"afer}
\email{bschaefer@math.uni-sb.de}
\address{Department of Mathematics, Saarland University, Postbox 151150, D-66041 Saarbr\"ucken, Germany}
\begin{document}


\begin{abstract}
    We study C*-algebras generated by two partitions of unity with orthogonality relations governed by hypercubes $Q_n$ for $n \in \N \setminus \{0\}$. These ``hypercube C*-algebras'' are special cases of bipartite graph C*-algebras which have been investigated by the author in a previous work. 
    We prove that the hypercube C*-algebras $C^\ast(Q_n)$ are subhomogeneous and obtain an explicit description as algebra of continuous functions from a standard simplex into a finite-dimensional matrix algebra with suitable boundary conditions. 
    Thus, we generalize Pedersen's description of the universal unital C*-algebra $C^\ast(p,q)$ of two projections. 
    We use our results to prove that any $2 \times 4$ ``magic isometry'' matrix can be filled up to a $4 \times 4$ ``magic unitary'' matrix. This answers a question from Banica, Skalski and So\l tan.
\end{abstract}

\maketitle

\setcounter{tocdepth}{1}    
\bookmarksetup{depth=2}     
\tableofcontents

\section{Introduction}
\label{sec::introduction}

Consider the universal unital C*-algebra $C^*(p,q)$ generated by two projections $p$ and $q$. It has been first noted by Pedersen \cite{pedersen_measure_1968} that this C*-algebra can be described very explicitly as algebra of continuous functions from the interval $[0,1]$ into the $2 \times 2$-matrices with suitable boundary conditions. More precisely, one has
\begin{align}
  \label{intro::eq:C*(p,q)_as_continuous_functions}
  C^\ast(p,q) \cong \{ f \in C([0,1], M_2) \mid f(0), f(1) \text{ are diagonal} \},
\end{align}
and the isomorphism is given by 
\begin{align*}
  p \mapsto \begin{pmatrix}
    1 & 0 \\
    0 & 0
  \end{pmatrix}_{t \in [0,1]}, \quad
  q \mapsto \begin{pmatrix}
    t & \sqrt{t(1-t)} \\
    \sqrt{t(1-t)} & 1-t
  \end{pmatrix}_{t \in [0,1]}.
\end{align*}
One can prove this theorem by different methods: For instance it follows from Halmos' description of two projections in generic position \cite{halmos_two_1969}, or it can be proved using the Mackey machine \cite{raeburn_c-algebra_1989}. More generally, Roch and Silbermann studied Banach algebras generated by two idempotents in \cite{roch_algebras_1988} and recovered the same result by very different methods. In fact, they proved that any unital Banach algebra $A$ generated by two idempotents satisfies the \emph{standard identity} of order $4$ which means that for any four elements $x_1, x_2, x_3, x_4 \in A$ the identity
\begin{align*}
  \sum_{\sigma \in S_4} \mathrm{sgn}(\sigma) x_{\sigma(1)} x_{\sigma(2)} x_{\sigma(3)} x_{\sigma(4)} = 0
\end{align*}
is true where $S_4$ is the symmetric group on four elements and $\mathrm{sgn}(\sigma)$ is the sign of the permutation $\sigma$. By general results, see e.g. \cite{krupnik_banach_1987}, it follows that for every maximal ideal $I$ of $A$ the quotient $A/I$ is isomorphic to some matrix algebra $M_\ell$ with $\ell \leq 2$. This enables a detailed description of $A$, and one recovers Pedersen's result in the case where $A$ is the universal C*-algebra generated by two projections.

In the preceding work \cite{schafer_classification_2025} the author investigated a class of C*-algebras which generalizes the algebra $C^\ast(p,q)$. Let us recall the definition.

\begin{definition}
  Given a bipartite graph $G = (U, V, E)$ with disjoint vertex sets $U$ and $V$ and edge set $E \subset \{\{u,v\} \mid u \in U, v \in V\}$ the \emph{bipartite graph C*-algebra} $C^\ast(G)$ is defined as the universal unital C*-algebra generated by projections $p_x$ for $x \in U \cup V$ subject to the relations
  \begin{align}
      \sum_{u \in U} p_u &= 1 = \sum_{v \in V} p_v,        \tag{GP1} \\
      p_u p_v &= 0 \; \text{ if } \{u, v\} \not \in E.     \tag{GP2}
  \end{align}
\end{definition}
These C*-algebras occur naturally when one studies the hypergraph C*-algebras of Trieb, Weber and Zenner \cite{trieb_hypergraph_2024}. An open question regarding those C*-algebras is for which hypergraphs $\mathrm{H}\Gamma$ the hypergraph C*-algebra $C^\ast(\mathrm{H}\Gamma)$ is nuclear. The author proved in \cite{schafer_nuclearity_2024} that it suffices to know which bipartite graph C*-algebras are nuclear in order to tell which hypergraph C*-algebras are nuclear, see also the discussion in \cite{schafer_classification_2025}.

It is an easy observation that a bipartite graph C*-algebra $C^\ast(G)$ is not nuclear if $K_{2,3} \subset G$, where $K_{2,3}$ is the complete bipartite graph with two vertices on one side and three vertices on the other side, see \cite[Corollary 2.9]{schafer_classification_2025}. However, the inverse implication is not known, and the following remains an open problem.

\begin{problem}
  \label{intro::problem:nuclearity_implied_by_K23_not_subgraph}
  Is $C^\ast(G)$ nuclear whenever $K_{2,3} \not \subset G$ holds?
\end{problem}

In this paper, we investigate the bipartite graph C*-algebras $C^\ast(Q_n)$ associated to the hypercubes $Q_n$ of dimension $n \in \N \setminus \{0\}$ where the hypercubes are naturally considered as bipartite graphs. Since $K_{2,3} \not \subset Q_n$ this allows to investigate Problem \ref{intro::problem:nuclearity_implied_by_K23_not_subgraph} in a particular, more manageable situation. The first three hypercubes $Q_1, Q_2, Q_3$ are sketched in Figure \ref{int::fig:hypercubes}. To study $C^\ast(Q_n)$ we use a method inspired by Roch and Silbermann \cite{roch_algebras_1988} whose approach was outlined above. In fact, we exploit the particular combinatorial structure of the hypercube to show that every corner $p_x C^\ast(Q_n) p_x$ for a vertex $x$ of $Q_n$ is commutative. As a result, the following theorem is obtained. Recall that a C*-algebra is called $k$-subhomogeneous if the dimension of the underlying Hilbert space of every irreducible representation is at most $k$.

\begin{figure}[htbp]
    \centering
    \begin{tikzpicture}
        \begin{scope}[xshift=0cm]
            \coordinate (A) at (0,1);
            \coordinate (B) at (2,1);
            
            \draw[thick] (A) -- (B);
            
            \filldraw[fill=white, draw=black] (A) circle (2pt);
            \filldraw[black] (B) circle (2pt);
            
            \node at (1,-0.4) {$Q_1$};
        \end{scope}
        
        \begin{scope}[xshift=4cm]
            \coordinate (A) at (0,0);
            \coordinate (B) at (1.5,0);
            \coordinate (C) at (1.5,1.5);
            \coordinate (D) at (0,1.5);
            
            \draw[thick] (A) -- (B) -- (C) -- (D) -- cycle;
            
            \filldraw[fill=white, draw=black] (A) circle (2pt);
            \filldraw[black] (B) circle (2pt);
            \filldraw[fill=white, draw=black] (C) circle (2pt);
            \filldraw[black] (D) circle (2pt);
            
            \node at (0.75,-0.4) {$Q_2$};
        \end{scope}
        
        \begin{scope}[xshift=8cm]
            \coordinate (A) at (0,0);
            \coordinate (B) at (1.5,0);
            \coordinate (C) at (1.5,1.5);
            \coordinate (D) at (0,1.5);
            \coordinate (E) at (0.5,0.5);
            \coordinate (F) at (2,0.5);
            \coordinate (G) at (2,2);
            \coordinate (H) at (0.5,2);
            
            \draw[thick] (A) -- (B) -- (C) -- (D) -- cycle;
            \draw[thick] (E) -- (F) -- (G) -- (H) -- cycle;
            \draw[thick] (A) -- (E);
            \draw[thick] (B) -- (F);
            \draw[thick] (C) -- (G);
            \draw[thick] (D) -- (H);
            
            \filldraw[fill=white, draw=black] (A) circle (2pt);
            \filldraw[black] (B) circle (2pt);
            \filldraw[fill=white, draw=black] (C) circle (2pt);
            \filldraw[black] (D) circle (2pt);
            \filldraw[black] (E) circle (2pt);
            \filldraw[fill=white, draw=black] (F) circle (2pt);
            \filldraw[black] (G) circle (2pt);
            \filldraw[fill=white, draw=black] (H) circle (2pt);
            
            \node at (1,-0.4) {$Q_3$};
        \end{scope}
    \end{tikzpicture}
    \caption{The hypercubes $Q_1$, $Q_2$ and $Q_3$}
    \label{int::fig:hypercubes}
\end{figure}
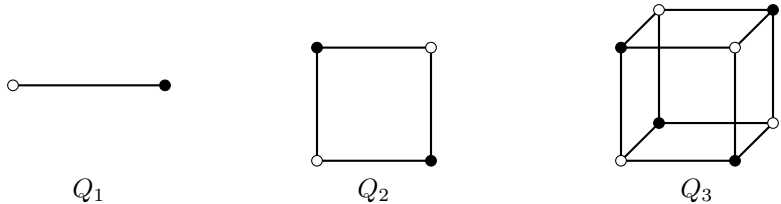

\begin{introthm}[{Theorem \ref{hyp::thm:subhomogeneous:hypercube_algebra_is_subhomogeneous}}]
    The $C^\ast$-algebra $C^\ast(Q_n)$ is $2^{n-1}$-subhomogeneous, and one has for all vertices $x$ of $Q_n$ and for every irreducible representation $\rho$ of $C^\ast(Q_n)$ on a Hilbert space $\mathcal{H}$,
    \begin{align*}
        \mathrm{dim}(\rho(p_x) \mathcal{H}) \leq 1.
    \end{align*}
    We call such a representation \emph{rank-one}.
\end{introthm}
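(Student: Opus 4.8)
My plan is to deduce both assertions from a single structural fact: that each \emph{corner} $p_x C^\ast(Q_n) p_x$ is a commutative C*-algebra. Granting this, the rank-one bound follows by a standard irreducibility argument. If $\rho$ is irreducible on $\mathcal{H}$, then $\rho(C^\ast(Q_n))$ is weakly dense in $B(\mathcal{H})$ by the bicommutant theorem, and compressing by $\rho(p_x)$ (a weakly continuous operation) shows that $\rho(p_x)\rho(C^\ast(Q_n))\rho(p_x)$ is weakly dense in $B(\rho(p_x)\mathcal{H})$. But this compression equals $\rho(p_x C^\ast(Q_n) p_x)$, the image of a commutative algebra, hence is commutative, and so is its weak closure $B(\rho(p_x)\mathcal{H})$; this forces $\dim\rho(p_x)\mathcal{H}\le 1$. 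The subhomogeneity is then a counting statement: identifying the vertices of $Q_n$ with $\{0,1\}^n$ and letting $U$ be the $2^{n-1}$ vertices of even Hamming weight, relation (GP1) forces the projections $\{p_u\}_{u\in U}$ to be pairwise orthogonal (a partition of unity by projections is automatically orthogonal), so $\mathcal{H}=\bigoplus_{u\in U}\rho(p_u)\mathcal{H}$ and therefore $\dim\mathcal{H}\le|U|=2^{n-1}$.

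It remains to prove that every corner is commutative; since the defining relations and the bipartition of $Q_n$ are symmetric under swapping the two colour classes, it suffices to treat $x\in U$. First I would record that $p_x C^\ast(Q_n) p_x$ is the closed linear span of the \emph{closed-walk elements} $p_x p_{y_1}\cdots p_{y_k} p_x$: by (GP2) and orthogonality within each class, a nonzero such word corresponds exactly to a walk in $Q_n$ from $x$ back to $x$, alternating between the two classes. The engine of the argument is two families of relations, one from each partition of unity. If $v,v'$ are two neighbours of $x$ (so $v,v'\in V$), then in $Q_n$ they have \emph{exactly two} common neighbours, namely $x$ and the opposite corner $u'$ of the unique square through $x,v,v'$; since $p_v p_w p_{v'}=0$ for every other $w\in U$ while $\sum_{w\in U}p_w=1$ and $p_v p_{v'}=0$, we obtain
\begin{align*}
  p_v p_x p_{v'} = -\,p_v p_{u'} p_{v'}.
\end{align*}
Symmetrically, $x$ and $u'$ have exactly the two common neighbours $v,v'$, and $\sum_{w\in V}p_w=1$ together with $p_x p_{u'}=0$ yields $p_x p_v p_{u'}=-\,p_x p_{v'} p_{u'}$. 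It is precisely this ``at most two common neighbours'' property, equivalent to $K_{2,3}\not\subset Q_n$, that is being exploited here.

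Using these two relations in succession I can show that the \emph{basic corner generators} $b_v:=p_x p_v p_x$, for $v$ a neighbour of $x$, pairwise commute: a direct two-step rewrite gives
\begin{align*}
  b_v b_{v'} = -\,p_x p_v p_{u'} p_{v'} p_x = p_x p_{v'} p_{u'} p_{v'} p_x = b_{v'} b_v,
\end{align*}
and the same computation shows that the length-four walk around a square reduces to $\pm b_v b_{v'}$. Thus the $\ast$-subalgebra generated by the $b_v$ is commutative, and the entire content of the lemma is reduced to showing that it exhausts the corner, i.e.\ that every closed-walk element can be rewritten as a linear combination of products of the $b_v$.

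This last reduction is the main obstacle. A general closed walk may travel far from $x$, and the square relations above only ``reroute'' or ``contract'' a detour whose intermediate $U$-vertex sits at distance two from $x$. To handle a vertex $u$ at larger distance I would combine the rerouting relations with a \emph{partition-completion} step: writing $p_u p_v p_u = p_u-\sum_{v''}p_u p_{v''} p_u$, where $v''$ runs over the remaining neighbours of $u$, trades a far excursion for shorter ones at the cost of additional terms. Making this terminate requires a carefully chosen induction—on the length of the walk together with, say, the maximal distance from $x$ attained along it—and it is here that the full combinatorial regularity of the hypercube (its square structure and the uniform control of common neighbours) is indispensable. I expect organising this double induction so that each rewriting step strictly decreases the chosen complexity measure to be the delicate part of the proof.
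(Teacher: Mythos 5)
Your reduction framework is exactly the paper's: both assertions do follow once each corner $p_x C^\ast(Q_n) p_x$ is shown to be commutative (the rank-one bound by compressing an irreducible representation, subhomogeneity by the partition of unity over the $2^{n-1}$ vertices of one colour class), and your square relation $p_v p_x p_{v'} = -\,p_v p_{u'} p_{v'}$, derived from the two-common-neighbours property, is precisely the paper's key computational lemma. But the core claim --- commutativity of the corner --- is not actually proved in your proposal. You establish only that the subalgebra generated by the basic elements $b_v = p_x p_v p_x$ is commutative, and you yourself flag the remaining step (that every closed-walk element lies in this subalgebra) as ``the main obstacle,'' offering a partition-completion identity and an unorganised double induction whose termination you explicitly do not verify. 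As it stands this is a genuine gap, not a routine omission: the partition-completion step $p_u p_v p_u = p_u - \sum_{v''} p_u p_{v''} p_u$ increases the number of terms, and nothing in your sketch controls that growth.

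The paper closes this gap without any such rewriting-to-generators argument. It encodes a walk as $[x;\,k_1\dots k_m]$, a start vertex plus the sequence of coordinate flips, and proves (from the same square relation you have, applied at an \emph{arbitrary} interior vertex of a walk, not only next to $x$) that transposing two adjacent distinct indices $k_i \neq k_{i+1}$ multiplies the walk element by $-1$. Then, for two loops $\mu = [x;\,k_1\dots k_m]$ and $\nu = [x;\,\ell_1\dots\ell_{m'}]$, it moves each index of $\nu$ leftward past the indices of $\mu$; since $\mu$ is a loop, every value occurs an even number of times among $k_1,\dots,k_m$, so the number of sign-changing swaps is even and $p_\mu p_\nu = p_\nu p_\mu$ exactly. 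This global parity count replaces your termination-sensitive induction entirely. Incidentally, your own pieces could be assembled into a complete proof along similar lines: iterating the square relation at interior vertices lets you \emph{sort} the flip-sequence of any loop, showing that every loop element equals $\pm$ a product of the $b_v$'s, and then your commutativity of the $b_v$'s finishes the argument. Either way, the missing ingredient is the observation that the square relation reroutes a walk at any position, so that loop elements can be manipulated by transpositions of their flip-sequences rather than contracted step by step toward $x$.
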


In particular, it follows that $C^\ast(Q_n)$ is nuclear as a $2^{n-1}$-subhomogeneous C*-algebra. This gives an affirmative answer to Problem \ref{intro::problem:nuclearity_implied_by_K23_not_subgraph} for the subclass $\{Q_n \mid n \in \N \setminus \{0\}\} \subset \{G \mid K_{2,3} \not \subset G\}$ of bipartite graphs with $K_{2,3} \not \subset Q_n$.

What's more, we obtain an explicit description of $C^\ast(Q_n)$ as algebra of continuous functions from the $(n-1)$-dimensional standard simplex 
$$
  \Delta_{n-1} = \left\{[t_0, \dots, t_{n-1}] \mid t_0, \dots, t_{n-1} \geq 0, \, \sum_i t_i = 1\right\}
$$ 
into the $2^{n-1} \times 2^{n-1}$-matrices $M_{2^{n-1}}$ with suitable boundary conditions. For the definition of $\mathbf{t}$-block diagonal form in the following theorem see the beginning of Section \ref{sec::explicit_description_of_hypercube_algebras}.

\begin{introthm}[Theorem \ref{hyp::thm:hypercube_cstar_algebras_as_continuous_functions}]
    The C*-algebra $C^\ast(Q_n)$ is isomorphic to
    \begin{align*}
        A_n := \left\{ f \in C(\Delta_{n-1}, M_{2^{n-1}}) \mid f(\mathbf{t}) \text{ is in } \mathbf{t}\text{-block diagonal form for all } \mathbf{t} \in \Delta_{n-1} \right\}.
    \end{align*}
\end{introthm}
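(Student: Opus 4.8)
The plan is to construct an explicit $*$-homomorphism $\Phi \colon C^\ast(Q_n) \to A_n$ out of a family of representations indexed by the simplex, and then to prove separately that $\Phi$ is surjective and injective; injectivity is where Theorem~A (subhomogeneity together with the rank-one property) does the decisive work. Write $U, V$ for the two colour classes of $Q_n$ (the even- and odd-weight vertices), each of size $2^{n-1}$, and label the $n$ coordinate directions $0, \dots, n-1$, so that every $u \in U$ has exactly one neighbour in each direction. For $\mathbf{t} = [t_0, \dots, t_{n-1}] \in \Delta_{n-1}$ I build a representation $\pi_{\mathbf{t}}$ on $\C^{2^{n-1}}$ whose generic member is the irreducible $2^{n-1}$-dimensional representation sitting over $\mathbf{t}$, set $\Phi(a)(\mathbf{t}) := \pi_{\mathbf{t}}(a)$, and identify the image with $A_n$.

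For the construction, fix an orthonormal basis $\{e_u\}_{u \in U}$ of $\C^{2^{n-1}}$ and declare $\pi_{\mathbf{t}}(p_u)$ to be the rank-one projection onto $e_u$, so that (GP1) holds on the $U$-side automatically. For $v \in V$ put
\[
  f_v \;=\; \sum_{u \,\sim\, v} \sqrt{t_{i(u,v)}}\,\varepsilon(u,v)\,e_u ,
\]
where $i(u,v)$ is the direction of the edge $\{u,v\}$ and $\varepsilon \colon E \to \{\pm 1\}$ is a fixed edge-signing; define $\pi_{\mathbf{t}}(p_v)$ to be the projection onto $f_v$. Since the $n$ neighbours of $v$ realise all $n$ directions, $\|f_v\|^2 = \sum_i t_i = 1$, and two distinct vertices $v, v' \in V$ share either $0$ or $2$ common neighbours; in the latter case those two neighbours $u_1, u_2$ close up a square face of $Q_n$ whose two edge-directions are $i$ and $j$, and
\[
  \langle f_v, f_{v'} \rangle \;=\; \sqrt{t_i t_j}\,\bigl(\varepsilon(u_1,v)\varepsilon(u_1,v') + \varepsilon(u_2,v)\varepsilon(u_2,v')\bigr)
\]
vanishes precisely when the product of the four signs around that square equals $-1$. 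Choosing $\varepsilon$ to be the Jordan--Wigner signing (for which every square face has sign product $-1$) therefore makes $\{f_v\}$ orthonormal, so (GP1) holds on the $V$-side and, since $e_u \perp f_v$ whenever $u \not\sim v$, (GP2) holds as well. By universality each $\pi_{\mathbf{t}}$ is a representation, and the explicit formulas show $\mathbf{t} \mapsto \pi_{\mathbf{t}}(a)$ is continuous; this flux computation also pins down the phases rigidly and is the reason no parameters beyond $\mathbf{t}$ appear.

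Next I would read off the boundary behaviour and deduce surjectivity. When $t_i = 0$ every $f_v$ loses its direction-$i$ component, so the orthonormal system $\{f_v\}$ decouples exactly along the splitting of $Q_n$ into the two facets obtained by deleting the direction-$i$ edges; iterating over $S = \{i : t_i = 0\}$ shows $\pi_{\mathbf{t}}(C^\ast(Q_n))$ is block diagonal with the $2^{|S|}$ blocks demanded by the $\mathbf{t}$-block diagonal form, so $\Phi$ maps into $A_n$. Because $\Phi(c_i)(\mathbf{t}) = t_i \cdot 1$ for the elements $c_i := \sum_{\{u,v\} \in E_i} p_u p_v p_u$ (with $E_i$ the direction-$i$ edges, and $\sum_i c_i = 1$), the image contains $C(\Delta_{n-1}) \cdot 1$ and hence separates the points of the base; since $\pi_{\mathbf{t}}$ is irreducible on each block and the blocks over a given $\mathbf{t}$ are pairwise inequivalent (they have disjoint supports among the generators), $\Phi$ is fibrewise onto the fibre of $A_n$. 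A Stone--Weierstrass argument for the resulting $C(\Delta_{n-1})$-algebra then yields surjectivity.

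Injectivity is the crux, and the main obstacle. It suffices to show $\bigoplus_{\mathbf{t}} \pi_{\mathbf{t}}$ is faithful, i.e.\ that every irreducible representation $\rho$ of $C^\ast(Q_n)$ is unitarily equivalent to a block of some $\pi_{\mathbf{t}}$. By Theorem~A, $\rho$ has dimension $d \le 2^{n-1}$ and every $\rho(p_x)$ has rank at most one, so the non-vanishing $\rho(p_u)$ and $\rho(p_v)$ determine two orthonormal systems whose overlap matrix is supported on the adjacency of $Q_n$. Fixing a vertex $u_0 \in U$ and using that the corner $p_{u_0} C^\ast(Q_n) p_{u_0}$ is commutative, the positive commuting elements $p_{u_0} p_{v_i} p_{u_0}$ act as scalars $t_i \ge 0$ with $\sum_i t_i = 1$, producing a point $\mathbf{t} \in \Delta_{n-1}$; the hard part is then to propagate this single-vertex datum across the connected hypercube to show that \emph{all} squared overlaps are direction-constant equal to $t_i$, and that the residual phases can be gauged to the standard signing $\varepsilon$ via the same square-face identity as above. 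Granting this rigidity identifies $\rho$ with a block of $\pi_{\mathbf{t}}$, so $\Phi$ is injective and hence an isomorphism. I expect the upgrade from ``commutative corner at one vertex'' to ``globally direction-constant overlaps with gauge-trivial flux'' to be the genuinely delicate step, since it is exactly here that the full combinatorics of $Q_n$ — rather than the local data at a single vertex — must be brought to bear.
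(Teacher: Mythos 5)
Your construction coincides with the paper's: your $\pi_\mathbf{t}$ with the Jordan--Wigner signing is exactly the representation $\rho_\mathbf{t}$ induced by the edge weighting $c_\mathbf{t}(ij) = (-1)^{\mathrm{par}_k(i)}\sqrt{t_k}$ of Definition \ref{weig::def:representation_rho_t}, your $\Phi$ is the paper's $\varphi$, and your surjectivity argument (fibrewise fullness from path-products acting as matrix units, the functions $\mathbf{t}\mapsto t_i$ in the image, Stone--Weierstrass) matches Section \ref{sec::explicit_description_of_hypercube_algebras}. The problem is injectivity. You reduce it, correctly, to showing that every irreducible representation is a block of some $\pi_\mathbf{t}$, you extract the point $\mathbf{t}\in\Delta_{n-1}$ from the commutative corner at a single vertex, and then you write ``granting this rigidity'' for the two claims that actually carry the theorem: that the squared overlaps are direction-constant across the whole cube, and that the residual phases can be gauged to the standard signing. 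That step is not a verification one can grant; it is the technical core of the paper, occupying Sections \ref{sec::hypercube_rank_one_representations} and \ref{sec::irreducible_representations}. Concretely, the paper proves it via (i) the square relation $c(e)\overline{c(h)} + c(f)\overline{c(g)} = 0$ for admissible weightings (Lemma \ref{reps::lemma:admissible_weightings_and_squares}), forcing opposite edges of any square to carry equal moduli; (ii) the combinatorial fact that any two edges of the same direction are linked by a chain of squares (Lemma \ref{hyp::lemma:properties_of_hypercubes2}), whence moduli are direction-constant (Lemma \ref{hyp::lemma:nonvanishing_admissible_weightings_are_almost_ct}); and (iii) an induction over the directions $0,\dots,n-1$ that gauges the phases one direction at a time, using the square relation again to check that normalizing direction $K$ does not disturb the directions $\ell < K$ already handled (Lemma \ref{hyp::lemma:nonvanishing_admissible_weightings_are_unitarily_equivalent_to_ct}). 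None of this appears in your proposal beyond the assertion that it should work.

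There is a second, smaller gap: your injectivity sketch implicitly assumes the representation has full support, i.e.\ that all $\rho(p_x)$ are nonzero. When the support is a proper sub-hypercube (equivalently, some coordinates of $\mathbf{t}$ vanish), the propagation argument cannot be run inside $Q_n$ directly. The paper handles this case by induction on $n$: such a $\rho$ factors through the quotient map $\pi^{(\pm\ell)}: C^\ast(Q_n) \to C^\ast(Q_{n-1})$ onto a facet, and a separate computation (Lemma \ref{reps::lemma:decomposition_of_rho_t_for_boundary_points}) shows that for a boundary point $\mathbf{t}$ the representation $\rho_\mathbf{t}$ decomposes precisely as the direct sum of the two facet representations, so the inductively obtained block really is a block of some $\rho_\mathbf{t}$ on the big cube. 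Without this reduction, your identification of $\rho$ with a block of some $\pi_\mathbf{t}$ is unproven even if one grants the rigidity statement for full-support representations.
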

For $n=2$ it is $C^\ast(Q_n) = C^\ast(Q_2) \cong C^\ast(p,q)$, $\Delta_{n-1} = [0,1]$ and $M_{2^{n-1}} = M_2$, and one recovers Pedersen's description of $C^\ast(p,q)$ as algebra of continuous functions from $[0,1]$ into $M_2$ from \eqref{intro::eq:C*(p,q)_as_continuous_functions}.

As an application, we use our findings to analyze $2 \times 4$ magic isometries. Let us first recall their definition from e.g. \cite[Definition 5.1]{banica_noncommutative_2012}.

\begin{definition} 
    \label{intro::def:magic_isometry_unitary}
    Let $m \leq n$ and let $A$ be a unital $C^\ast$-algebra. An $m \times n$-matrix $P = (P_{ij})_{i \leq m, j \leq n} \in M_{m,n}(A)$ with entries from $A$ is called a \emph{magic isometry} if the entries $P_{ij}$ are projections which form a partition of unity along rows and are pairwise orthogonal along columns, i.e.
    \begin{align*}
        (P_{ij})_{j=1, \dots, n} &\text{ is a partition of unity for each fixed } i \leq m, \\
        (P_{ij})_{i=1, \dots, m} &\text{ is a family of pairwise orthogonal projections for each fixed } j \leq n.
    \end{align*} 
    Here, with a partition of unity we mean a family of projections in a unital $C^\ast$-algebra that sum up to the unit. 
    
    If $m = n$ and the projections $P_{ij}$ form a partition of unity along both rows and columns, then we call $P$ a \emph{magic unitary}.
\end{definition}

The following problem was first raised by Banica, Skalski and So\l tan in \cite[Section 5]{banica_noncommutative_2012}, see also Weber's introductory article \cite[Problem 3.4]{weber_quantum_2023}.

\begin{problem}
    \label{intro::problem:magic_isometry_fill_up}
    Can every $m \times n$-magic isometry be filled up to a magic unitary matrix, i.e. given a magic isometry 
    $$
    P = (P_{ij})_{i \leq m, j \leq n} \in M_{m,n}(A),
    $$ 
    where $A = C^\ast(P_{ij})$ and $m \leq n$, can one find another $C^\ast$-algebra $B$ and a magic unitary 
    $$
    U = (U_{ij})_{i,j \leq n} \in M_n(B)
    $$ 
    such that $A$ embeds into $B$ via $P_{ij} \mapsto U_{ij}$ for $i \leq m, j \leq n$?
\end{problem}

It turns out that the universal $C^\ast$-algebra generated by the entries of a $2 \times 4$-magic isometry is the bipartite graph algebra $C^\ast(Q_3)$ associated to the three-dimensional cube. On the other hand, a useful matrix model for the universal $C^\ast$-algebra generated by the entries of  $4 \times 4$-magic unitary, $C(S_4^+)$, has been described by Banica and Collins in \cite{banica_integration_2008}. As already suggested by Banica, Skalski and So\l tan we can use their matrix model and combine it with our description of $C^\ast(Q_3)$ to solve the above problem in the affirmative for $m=2$ and $n=4$.

\begin{introprop}[Proposition \ref{hyp::prop:quantum_sudoku_fill_up}]
    Every $2 \times 4$-magic isometry can be filled up to a $4 \times 4$-magic unitary matrix in the sense of Problem \ref{intro::problem:magic_isometry_fill_up}.
\end{introprop}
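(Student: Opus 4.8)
First I would make the introductory observation precise and use it to reduce to a universal setting: the universal unital C*-algebra generated by the entries of a $2\times 4$ magic isometry is $C^\ast(Q_3)$. Indeed, the two row relations are exactly the partition-of-unity relations (GP1), while the column orthogonalities $P_{1j}P_{2j}=0$ are exactly the non-edge relations (GP2) for $K_{4,4}$ with a perfect matching removed; that graph is isomorphic to the cube $Q_3$, the deleted matching corresponding to the antipodal pairs of vertices. Consequently any $2\times 4$ magic isometry $P$ in $A=C^\ast(P_{ij})$ is the image of the universal magic isometry $(p_{ij})$ under a canonical surjection $\pi\colon C^\ast(Q_3)\twoheadrightarrow A$, and it suffices to (i) fill up the universal magic isometry inside $C(S_4^+)$ and (ii) transport this fill-up along $\pi$.

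For step (i) I would invoke the Banica--Collins matrix model of $C(S_4^+)$ from \cite{banica_integration_2008}, in which the fundamental magic unitary $U=(u_{ij})_{i,j\le 4}$ is realized by an explicit family of rank-one projections over a compact parameter space. Its first two rows $(u_{ij})_{i\le 2}$ form a magic isometry, so by the universal property they induce a unital $*$-homomorphism $\iota\colon C^\ast(Q_3)\to C(S_4^+)$ with $p_{ij}\mapsto u_{ij}$. The decisive compatibility is that the rank-one nature of the model matches Theorem~\ref{hyp::thm:subhomogeneous:hypercube_algebra_is_subhomogeneous}: every irreducible representation of $C^\ast(Q_3)$ is rank-one, and through the description $C^\ast(Q_3)\cong A_3$ of Theorem~\ref{hyp::thm:hypercube_cstar_algebras_as_continuous_functions} these irreducibles are governed by $\Delta_2$. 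To prove $\iota$ injective I would verify that each such irreducible is recovered as the restriction to the first two rows of some representation occurring in the model, i.e. that the induced map from the spectrum of $C(S_4^+)$ down to the spectrum of $C^\ast(Q_3)$ is surjective.

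For step (ii) I would set $B:=C(S_4^+)/\mathcal I$, where $\mathcal I$ is the ideal generated by $\iota(\ker\pi)$; the magic unitary descends to a magic unitary in $B$, yielding an induced map $\bar\iota\colon A\to B$ with $P_{ij}\mapsto U_{ij}$. Everything then hinges on $\bar\iota$ being injective, i.e. on $\iota^{-1}(\mathcal I)=\ker\pi$, and this is the main obstacle: exhibiting \emph{some} $B$ carrying a compatible magic unitary is formal, whereas ruling out that the quotient of $C(S_4^+)$ forces new relations among the first two rows requires genuine control of the ideal lattices. Here I would use that $C^\ast(Q_3)$ is subhomogeneous, hence type~I, with spectrum transparently described by Theorem~\ref{hyp::thm:hypercube_cstar_algebras_as_continuous_functions}: ideals correspond to closed subsets of $\Delta_2$ and quotients to restrictions, so $\iota^{-1}(\mathcal I)=\ker\pi$ would follow from the spectral surjectivity of step~(i). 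The delicate book-keeping — matching the dimension of a magic-unitary representation, which may exceed four, against the at-most-$4$-dimensional magic-isometry irreducibles, and tracking continuity together with the boundary conditions across $\Delta_2$ — is precisely what the two explicit models are designed to make manageable.
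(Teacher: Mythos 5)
Your strategy is structurally the same as the paper's: identify the universal C*-algebra of a $2\times 4$-magic isometry with $C^\ast(Q_3)$, obtain $\iota\colon C^\ast(Q_3)\to C(S_4^+)$, $p_{ij}\mapsto u_{ij}$, from the universal property, pass to the quotient $B=C(S_4^+)/\mathcal I$ with $\mathcal I$ generated by $\iota(\ker\pi)$, and deduce injectivity of the induced map $A\to B$ from the statement that every irreducible representation of $C^\ast(Q_3)$ factors through $\iota$; this is exactly condition ($\ast$) of Lemma \ref{app::lemma:reformulation_of_quantum_sudoku_fill_up_problem}, and your step (ii) reproduces that reduction correctly. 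The genuine gap is that the decisive input, the ``spectral surjectivity'' of your step (i), is never proved: you only announce that you ``would verify'' that each irreducible representation of $C^\ast(Q_3)$ is recovered from the Banica--Collins model. That verification is the mathematical core of the whole proof, and it is not formal: a priori nothing guarantees that evaluating the model at points of $\mathrm{SU}(2)$ and restricting to the first two rows produces \emph{every} representation $\rho_\mathbf{t}$, $\mathbf{t}\in\Delta_2$. The paper settles precisely this in Lemma \ref{hyp::lemma:representation_rho_t_factors_thorugh_sigma_x}: given $\mathbf{t}=[a^2,b^2,c^2]$ it exhibits an explicit matrix $x\in\mathrm{SU}(2)$ with parameters $s=a/(\sqrt{2}\sqrt{c+1})$, $t=b/(\sqrt{2}\sqrt{c+1})$, $u=\sqrt{(c+1)/2}$, and checks by a Pauli-matrix computation, a diagonal gauge transformation, and comparison with Example \ref{hyp::example:representation_rho_t_for_the_cube_Q3} that $\sigma_x\circ\varphi\cong\rho_\mathbf{t}$. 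Without this (or an equivalent) computation your argument reduces the problem to an unproven claim of the same difficulty.

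Two further points. First, your justification of $\iota^{-1}(\mathcal I)=\ker\pi$ via ``ideals correspond to closed subsets of $\Delta_2$'' is incorrect as stated: the primitive ideal space of $C^\ast(Q_3)$ is not $\Delta_2$, since at boundary points $\mathbf{t}$ the representation $\rho_\mathbf{t}$ splits into several inequivalent irreducibles (Lemma \ref{reps::lemma:decomposition_of_rho_t_for_boundary_points}), so several spectrum points lie over one simplex point. The detour through the ideal lattice is also unnecessary: granted condition ($\ast$), one argues directly as the paper does --- any irreducible representation $\rho'$ of $A$ lifts to $\rho=\rho'\circ\pi=\sigma\circ\varphi$; since $\rho$ kills $\ker\pi$, the representation $\sigma$ kills $\mathcal I$ and descends to $B$, so $\rho'$ factors through the induced map, and since irreducible representations separate the points of $A$, that map is injective. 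Second, your worry about magic-unitary representations of dimension exceeding four does not arise along the paper's route, because the factorization is exact at the level of the four-dimensional representations: $\rho_\mathbf{t}$ and $\sigma_x$ both act on $\C^4$.
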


\subsection{Outline}

In \textbf{Section \ref{sec::preliminaries}} we collect some basic definitions about bipartite graph C*-algebras and hypercubes. In \textbf{Section \ref{sec::subhomogeneous}} we prove that every hypercube C*-algebra $C^\ast(Q_n)$ is subhomogeneous by analyzing loops in the hypercube. In \textbf{Section \ref{sec::hypercube_rank_one_representations}} we describe rank-one representations of $C^\ast(Q_n)$ using complex edge weightings of $Q_n$. We rely on these edge weightings to classify all irreducible representations of hypercube C*-algebras in \textbf{Section \ref{sec::irreducible_representations}}. Using this it is not difficult to describe $C^\ast(Q_n)$ as algebra of continuous functions in \textbf{Section \ref{sec::explicit_description_of_hypercube_algebras}}. Finally, in \textbf{Section \ref{sec::application_to_magic_isometries}} we apply our results to prove that any $2 \times 4$-magic isometry matrix can be filled up to a $4 \times 4$-magic unitary matrix.

\subsection{Acknowledgements}

The author is grateful to his supervisor Moritz Weber for his support and helpful advices. Furthermore, the author would like to thank Steffen Roch for making him aware of and discussing his results on Banach algebras generated by two idempotents and pointing out other literature. 

This work is part of the author's PhD thesis and a contribution to the SFB-TRR 195. During the time of writing, the author co-supervised the master's thesis of Fabian Selzer, who worked on a similar topic as in Section 4 of the present article, and who presented an alternative proof of Theorem \ref{hyp::thm:hypercube_cstar_algebras_as_continuous_functions}. 

\section{Preliminaries}
\label{sec::preliminaries}

In this section we collect some basic definitions about bipartite graph C*-algebras and hypercubes. Definitions and first observations regarding bipartite graph C*-algebras are copied verbatim from \cite{schafer_classification_2025}.

\subsection{Bipartite graph C*-algebras}

Bipartite graph C*-algebras have been introduced by the author in a previous paper. We recall their definition shortly and refer to \cite{schafer_classification_2025} for more details. 

\begin{definition}
    A bipartite graph $G$ consists of two vertex sets $U$ and $V$ together with an edge set $E \subset \{\{u,v\}\mid u \in U, v \in V\}$. We write $G = (U, V, E)$.
\end{definition}

For two vertices in a bipartite graph $G = (U, V, E)$ let us write $u \sim v$ if $\{u,v\} \in E$ and let $\mathcal{N}(u) := \{v \in V\mid v \sim u\}$ ($\mathcal{N}(v) := \{u \in U\mid v \sim u\}$) be the neighbors of $u$ ($v$). For vertices $x \in U \cup V$ or edges $e \in E$ we also write $x \in G$ or $e \in G$, respectively, and say that $x$ or $e$ are contained in $G$. 

A path $\mu$ in a bipartite graph $G$ is a finite sequence of vertices $x_1 \dots x_n$ such that $x_i \sim x_{i+1}$ holds for all $i < n$. For every $i < n$, we say that $x_i$ is contained in $\mu$. We set $s(\mu) = x_1$ and $r(\mu) = x_n$. For two paths $\mu=x_1 \dots x_m$ and $\nu = y_1 \dots y_n$ with $r(\mu) = s(\nu)$ we write $\mu \nu$ for the combined path $x_1 \dots x_m y_1 \dots y_n$.

Subgraphs and induced subgraphs of a bipartite graph are defined in the natural way, for more details see \cite[Definition 2.3]{schafer_classification_2025}. Examples of bipartite graphs include the complete bipartite graphs $K_{m,n}$ for $m,n \in \N$.

Let us next recall the definition of bipartite graph C*-algebras.

\begin{definition}[{\cite[Definition 2.4]{schafer_classification_2025}}]
    \label{bipartite_graph_algebras:definition}
    Given a bipartite graph $G = (U, V, E)$ let $C^\ast(G)$ be the universal $C^\ast$-algebra generated by a family of projections $(p_x)_{x \in U \cup V}$ subject to the following relations:
    \begin{align}
        \sum_{u \in U} p_u &= 1 = \sum_{v \in V} p_v,        \tag{GP1} \label{bga::eq:partition_relation} \\
        p_u p_v &= 0 \; \text{ if } \{u, v\} \not \in E.  \tag{GP2} \label{bga::eq:orthogonality_relation}
    \end{align}
\end{definition}

The following dense subset of $C^\ast(G)$ is immediate.

\begin{proposition}[{\cite[Proposition 2.6]{schafer_classification_2025}}]
    \label{prop::bipartite_graph_algebras:definition:dense_subset}
    Let $G$ be a bipartite graph and for every path $\mu = x_1 \dots x_n$ in $G$, write $p_\mu := p_{x_1} \cdots p_{x_n}$ for the associated element in $C^\ast(G)$. Then the elements $p_\mu$ span a dense subset of $C^\ast(G)$. 
\end{proposition}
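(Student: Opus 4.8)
The plan is to exploit that the generators $p_x$ are self-adjoint, so that the algebraic structure carries everything. Since the $p_x$ generate $C^\ast(G)$ and each is a projection, the unital $*$-subalgebra they generate is dense; and because $p_x^\ast = p_x$, this subalgebra is precisely the linear span of the unit together with all finite products (``words'')
\[
  p_{x_1} p_{x_2} \cdots p_{x_n}, \qquad x_1, \dots, x_n \in U \cup V .
\]
It therefore suffices to show that every such word is either $0$ or equal to $p_\mu$ for some path $\mu$, and that the unit lies in $\mathrm{span}\{p_\mu\}$. The latter is immediate from \eqref{bga::eq:partition_relation}: each single vertex $u$ is a (length-one) path, so $1 = \sum_{u \in U} p_u$ is already a linear combination of path products. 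The bulk of the work is thus the claim about words.

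To handle a word, I would first use idempotency $p_x^2 = p_x$ to delete consecutive repeated letters, reducing $p_{x_1} \cdots p_{x_n}$ to a product $p_{y_1} \cdots p_{y_m}$ in which no two consecutive vertices coincide. Now I would inspect each consecutive pair $(y_j, y_{j+1})$, which falls into exactly one of three types: (a) $y_j$ and $y_{j+1}$ lie in the same part (hence are distinct); (b) they lie in different parts but are not adjacent; (c) they lie in different parts and are adjacent. In case (b) one has $p_{y_j} p_{y_{j+1}} = 0$ directly by \eqref{bga::eq:orthogonality_relation}, and in case (a) one also gets $p_{y_j} p_{y_{j+1}} = 0$ (see the next paragraph). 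Since such a vanishing factor sits inside the word, the whole product is then $0$. Consequently, if the word is nonzero, every consecutive pair must be of type (c); but a sequence in which consecutive vertices alternate between the parts and are always adjacent is exactly a path, so $p_{y_1} \cdots p_{y_m} = p_\mu$ with $\mu = y_1 \dots y_m$, as desired.

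The only point that is not purely formal, and hence the main (though modest) obstacle, is the orthogonality asserted in case (a): distinct vertices of the same part are not required to be orthogonal by the relations, so this must be \emph{derived} from \eqref{bga::eq:partition_relation}. The key sub-lemma is that for projections $p, q$ with $p + q \leq 1$ one has $pq = 0$: indeed $q \leq 1 - p$ forces $q(1-p) = q$, i.e. $qp = 0$, whence $pq = (qp)^\ast = 0$. Applying this to two distinct vertices $x, x'$ of the same part, positivity of the remaining generators gives $p_x + p_{x'} \leq \sum_{u} p_u = 1$ by \eqref{bga::eq:partition_relation}, so $p_x p_{x'} = 0$. With case (a) thus settled, the classification of the previous paragraph goes through, every word is $0$ or a path product, and the span of $\{p_\mu\}$ contains the dense $*$-subalgebra generated by the $p_x$; therefore it is dense in $C^\ast(G)$.
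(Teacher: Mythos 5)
Your proof is correct. The paper itself does not reprint a proof of this proposition --- it is quoted as ``immediate'' from \cite[Proposition 2.6]{schafer_classification_2025} --- but your argument is precisely the standard one that claim alludes to: reduce words in the generators via idempotency, kill pairs violating \eqref{bga::eq:orthogonality_relation}, and identify the surviving words with path elements. You also correctly isolate and prove the one step that is not purely formal, namely that distinct vertices in the same part yield orthogonal projections (via the sub-lemma that projections $p,q$ with $p+q\leq 1$ satisfy $pq=0$), which is exactly the point a careless reading of the relations would miss, since \eqref{bga::eq:orthogonality_relation} only governs pairs from opposite parts.
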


\subsection{Hypercubes}
\label{sec::hypercubes}

In this section we give a precise definition of the hypercubes $Q_n$ for $n \in \N \setminus \{0\}$ using binary numbers. We also collect some properties of hypercubes that will be useful later. 

The following notation will be crucial throughout this paper.

\begin{notation}
    \label{pre::notation:binary_numbers}
    For every number $i \in \N$ and $n \in \N$ such that $i < 2^n$ the $n$-digit binary representation of $i$ is the sequence 
    \begin{align*}
        i_{n-1} \dots i_1 i_0 \in \{0,1\}^n \quad \text{ such that } \quad i = \sum_{k=0}^{n-1} i_k 2^{k}.
    \end{align*}
    We call the number $i_k$ the $k$-th binary digit of $i$ and denote $[i]_k := i_k$. Further, we set
    \begin{align*}
        \mathrm{par}_k(i) = \sum_{\ell=0}^{k} i_\ell \mod 2.
    \end{align*}
    Note that $\mathrm{par}_0(i)$ is the parity of the number $i$.

    Further, for every $k < n$ let $i \# k \in \N$ be the number obtained by flipping the $k$-th binary digit of $i$, i.e. 
    \begin{align*}
        i \# k = \begin{cases}
            i + 2^{k} & \text{if } i_k = 0, \\
            i - 2^{k} & \text{if } i_k = 1.
        \end{cases}
    \end{align*}
\end{notation}

Note that we will be using square brackets $[$ $]$ both to denote digits of a binary number $[i]_\ell$ and to indicate components of a vector $[\psi]_i$. It is always clear from the context which meaning is intended.

\begin{definition}
    The $n$-dimensional hypercube $Q_n = (U_n, V_n, E_n)$ is the bipartite graph with vertex sets
    \begin{align*}
        U_n = \{i < 2^n \mid \mathrm{par}_{n-1}(i) = 0\}, \qquad
        V_n = \{j < 2^n \mid \mathrm{par}_{n-1}(j) = 1\}
    \end{align*}
    and edge set
    \begin{align*}
        E_{n} = \{ \{i,j\} \mid i \in U_n, \; j \in V_n, \; i \# k = j \text{ for some } k < n\}.
    \end{align*}
    We denote the edge $\{i,j\} \in E_{n}$ with $i \in U_n$ and $j \in V_n$ also by $ij \in E_{n}$.
\end{definition}

\begin{example}
    Let us take a second look at the three hypercubes $Q_1, Q_2, Q_3$ that were sketched in the introduction in Figure \ref{int::fig:hypercubes}. We can label their vertices by binary numbers as in Figure \ref{pre::fig:hypercubes_with_binary_labels}.
    \begin{figure}[htbp]
    \centering
    \begin{tikzpicture}
            \begin{scope}[xshift=0cm]
                \coordinate (A) at (0,1);
                \coordinate (B) at (2,1);

                \draw[thick] (A) -- (B);

                \filldraw[fill=white, draw=black] (A) circle (2pt);
                \filldraw[black] (B) circle (2pt);

                \node[left] at (A) {$0$};
                \node[right] at (B) {$1$};

                \node at (1,-0.4) {$Q_1$};
            \end{scope}

            \begin{scope}[xshift=4cm]
                \coordinate (A) at (0,0);
                \coordinate (B) at (1.5,0);
                \coordinate (C) at (1.5,1.5);
                \coordinate (D) at (0,1.5);

                \draw[thick] (A) -- (B) -- (C) -- (D) -- cycle;

                \filldraw[fill=white, draw=black] (A) circle (2pt);
                \filldraw[black] (B) circle (2pt);
                \filldraw[fill=white, draw=black] (C) circle (2pt);
                \filldraw[black] (D) circle (2pt);

                \node[below left] at (A) {$00$};
                \node[below right] at (B) {$10$};
                \node[above right] at (C) {$11$};
                \node[above left] at (D) {$01$};

                \node at (0.75,-0.4) {$Q_2$};
            \end{scope}

            \begin{scope}[xshift=8cm]
                \coordinate (A) at (0,0);
                \coordinate (B) at (1.5,0);
                \coordinate (C) at (1.5,1.5);
                \coordinate (D) at (0,1.5);
                \coordinate (E) at (0.5,0.5);
                \coordinate (F) at (2,0.5);
                \coordinate (G) at (2,2);
                \coordinate (H) at (0.5,2);

                \draw[thick] (A) -- (B) -- (C) -- (D) -- cycle;
                \draw[thick] (E) -- (F) -- (G) -- (H) -- cycle;
                \draw[thick] (A) -- (E);
                \draw[thick] (B) -- (F);
                \draw[thick] (C) -- (G);
                \draw[thick] (D) -- (H);

                \filldraw[fill=white, draw=black] (A) circle (2pt);
                \filldraw[black] (B) circle (2pt);
                \filldraw[fill=white, draw=black] (C) circle (2pt);
                \filldraw[black] (D) circle (2pt);
                \filldraw[black] (E) circle (2pt);
                \filldraw[fill=white, draw=black] (F) circle (2pt);
                \filldraw[black] (G) circle (2pt);
                \filldraw[fill=white, draw=black] (H) circle (2pt);

                \node[below left] at (A) {$000$};
                \node[below right] at (B) {$100$};
                \node[above left] at (C) {$110$};
                \node[above left] at (D) {$010$};
                \node[below right] at (E) {$001$};
                \node[below right] at (F) {$101$};
                \node[above right] at (G) {$111$};
                \node[above left] at (H) {$011$};

                \node at (1,-0.4) {$Q_3$};
            \end{scope}
        \end{tikzpicture}
        \caption{The hypercubes $Q_1$, $Q_2$ and $Q_3$ with vertices labeled by binary numbers}
        \label{pre::fig:hypercubes_with_binary_labels}
    \end{figure}
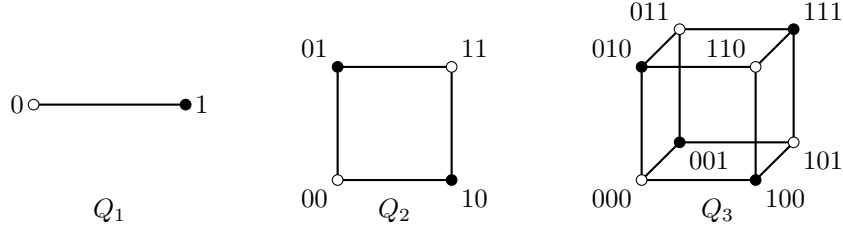
\end{example}

The following is an immediate observation.

\begin{proposition}
    \label{hyp::prop:common_neighbors_in_hypercubes}
    Two distinct vertices $x_1, x_2 \in U_n \cup V_n$ of the hypercube $Q_n$ have either zero or two common neighbors $y_1, y_2 \in U_n \cup V_n$. The latter is true if and only if $x_1$ and $x_2$ differ in exactly two binary digits. In this case the subgraph $Q_n(x_1, x_2, y_1, y_2) \subset Q_n$ induced by the vertices $x_1, x_2, y_1, y_2$ is isomorphic to the square $Q_2$.
    In particular, we have $K_{2,3} \not \subset Q_n$.
\end{proposition}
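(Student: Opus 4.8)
The plan is to translate the whole statement into the language of Hamming distance on binary strings. Writing $d(x_1,x_2) = \#\{k < n : [x_1]_k \neq [x_2]_k\}$ for the number of differing binary digits, the adjacency relation becomes $x \sim y \iff d(x,y) = 1$, which is merely a reformulation of the condition ``$x \# k = y$ for some $k$'', and the bipartition is governed by parity: $x_1$ and $x_2$ lie in the same part $U_n$ (resp.\ $V_n$) precisely when $d(x_1,x_2)$ is even, since $\mathrm{par}_{n-1}$ records the total weight modulo $2$. A common neighbor of $x_1$ and $x_2$ is then simply a vertex $y$ with $d(x_1,y) = d(x_2,y) = 1$. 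Everything will follow from combining this metric picture with the parity of $d$.

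First I would dispose of the case where $x_1$ and $x_2$ lie in opposite parts. In a bipartite graph every neighbor of a $U_n$-vertex lies in $V_n$ and conversely, so a common neighbor of $x_1 \in U_n$ and $x_2 \in V_n$ would have to lie in $U_n \cap V_n = \emptyset$; hence such vertices share no neighbor. Consistently, vertices in opposite parts have $d(x_1,x_2)$ odd, so in particular they do not differ in exactly two digits. Next, assume $x_1,x_2$ lie in the same part, so $d(x_1,x_2)$ is even and, since $x_1 \neq x_2$, at least $2$. If a common neighbor $y$ exists, the triangle inequality for the Hamming metric gives $d(x_1,x_2) \leq d(x_1,y) + d(y,x_2) = 2$, forcing $d(x_1,x_2) = 2$; this already shows common neighbors can occur only when $x_1,x_2$ differ in exactly two digits.

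For the converse and the exact count, suppose $x_1$ and $x_2$ differ precisely in the two positions $a \neq b$. The key bookkeeping step is the symmetric-difference identity: for any $y$ the flip-sets $S_1 = \{k : [x_1]_k \neq [y]_k\}$ and $S_2 = \{k : [x_2]_k \neq [y]_k\}$ satisfy $S_1 \triangle S_2 = \{a,b\}$, and imposing $|S_1| = |S_2| = 1$ forces $\{S_1, S_2\} = \{\{a\},\{b\}\}$. This pins down exactly two candidates, $y_1 = x_1 \# a = x_2 \# b$ and $y_2 = x_1 \# b = x_2 \# a$, which are distinct because $a \neq b$ and are each immediately verified to be genuine common neighbors. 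Thus there are exactly two, giving both the stated equivalence and the zero-or-two dichotomy. To identify the induced subgraph $Q_n(x_1,x_2,y_1,y_2)$ I would tabulate the six possible edges from the distances already computed: the four pairs $x_1y_1, x_1y_2, x_2y_1, x_2y_2$ have Hamming distance $1$ and are edges, while $d(x_1,x_2)=2$ and $d(y_1,y_2)=2$ (the latter since $y_1,y_2$ differ exactly in $a$ and $b$) show $\{x_1,x_2\}$ and $\{y_1,y_2\}$ are non-edges. The result is a four-cycle, i.e.\ the square $Q_2 \cong K_{2,2}$. Finally, $K_{2,3} \not\subset Q_n$ is immediate: a copy of $K_{2,3}$ would exhibit two vertices with three common neighbors, contradicting the ``at most two'' bound just proved.

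I expect the only genuinely delicate point to be the counting step that upgrades ``at most two'' to ``exactly two'' together with the distinctness $y_1 \neq y_2$; the identity $S_1 \triangle S_2 = \{a,b\}$ is the clean device that makes this bookkeeping rigorous, after which the edge/non-edge tabulation and the $K_{2,3}$-exclusion are purely formal consequences of the metric-plus-parity reformulation.
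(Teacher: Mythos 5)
Your proof is correct, and it is complete in every detail: the parity argument for the bipartition, the triangle-inequality bound $d(x_1,x_2)\le 2$, the symmetric-difference identity $S_1\triangle S_2=\{a,b\}$ pinning down exactly the two neighbors $x_1\#a=x_2\#b$ and $x_1\#b=x_2\#a$, the edge/non-edge tabulation giving the square, and the $K_{2,3}$ exclusion. The paper itself offers no proof at all (the proposition is labeled an ``immediate observation''), so there is nothing to diverge from; your Hamming-distance argument is precisely the standard reasoning the author leaves implicit, written out rigorously.
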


\begin{notation}
    \label{pre::notation:subhypercube_Q_n^(k)_and_E_n^(k)}
    Let $n \in \N \setminus \{0\}$ and $k < n$.
    \begin{enumerate}
        \item We define $Q_n^{(+k)} \subset Q_n$ as the subgraph induced by the vertices $x \in U_n \cup V_n$ with $[x]_k = 1$, and we let $Q_n^{(-k)}$ be the subgraph induced by the vertices $x \in U_n \cup V_n$ with $[x]_k = 0$.
        \item We let $E_n^{(k)} \subset E_n$ be the set of edges $ij \in E_n$ with $i = j \# k$.
    \end{enumerate}
\end{notation}

\begin{lemma}
    \label{hyp::lemma:properties_of_hypercubes2}
    The following properties hold for the hypercubes $Q_n$.
    \begin{enumerate}
        \item For every proper sub-hypercube $Q^\prime \subsetneq Q_n$ there is a $k < n$ such that $Q^\prime$ is contained in $Q_n^{(+k)}$ or $Q_n^{(-k)}$.
        \item Consider the equivalence relation on $E_n^{(k)}$ generated by 
        \begin{align*}
            i_1j_1 \sim_0 i_2j_2
            \quad :\Leftrightarrow \quad
            Q_n(i_1, i_2, j_1, j_2) \cong Q_2,
        \end{align*}
        where $Q_n(i_1, i_2, j_1, j_2) \subset Q_n$ is the subgraph induced by $i_1, i_2, j_1, j_2$.
        Then all edges $ij \in E_n^{(k)}$ are in the same equivalence class.
        \item Assume that a subgraph $G \subset Q_n$ is closed under squares in the sense that $G$ contains all edges of a square in $Q_n$ as soon as it contains two adjacent edges of the square. Then $G$ is a disjoint union of sub-hypercubes.
    \end{enumerate}
\end{lemma}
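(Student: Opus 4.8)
The plan is to treat the three parts largely independently, with part (3) carrying the bulk of the work. Throughout I would use the characterization of sub-hypercubes as coordinate faces, which is immediate from the definition of a sub-hypercube as an induced subgraph obtained by freezing a set of binary digits: a sub-hypercube $Q' \cong Q_m$ is given by a set $S \subseteq \{0,\dots,n-1\}$ of $m$ \emph{free} directions together with a fixed value $c_k \in \{0,1\}$ for every \emph{frozen} direction $k \notin S$, its vertex set being $\{x : [x]_k = c_k \text{ for all } k \notin S\}$.

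Granting this description, part (1) is immediate. If $Q' \subsetneq Q_n$ is proper then $S \neq \{0,\dots,n-1\}$, so at least one direction $k$ is frozen to some value $c_k$. Every vertex $x \in Q'$ then satisfies $[x]_k = c_k$, whence $Q' \subseteq Q_n^{(+k)}$ if $c_k = 1$ and $Q' \subseteq Q_n^{(-k)}$ if $c_k = 0$.

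For part (2), I would parametrize $E_n^{(k)}$ by \emph{profiles}. Each edge $ij \in E_n^{(k)}$ has the form $\{x, x\#k\}$ and is determined by the tuple of bits of $x$ at the positions $\neq k$, giving a bijection between $E_n^{(k)}$ and $\{0,1\}^{\{0,\dots,n-1\}\setminus\{k\}}$. I would then check that two distinct $k$-edges $e,e'$ satisfy $e \sim_0 e'$ if and only if their profiles differ in exactly one coordinate $\ell \neq k$: a square containing two edges of $E_n^{(k)}$ must be spanned by direction $k$ and one further direction $\ell$, and its two $k$-edges are exactly $\{x, x\#k\}$ and $\{x\#\ell, x\#\ell\#k\}$, whose profiles differ only at $\ell$ (here the parity bookkeeping of Proposition \ref{hyp::prop:common_neighbors_in_hypercubes} matches endpoints in $U_n$ and $V_n$). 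Thus $\sim_0$ is precisely single-bit-flip adjacency of profiles in the $(n-1)$-cube, and since that cube is connected its transitive closure identifies all profiles, so $E_n^{(k)}$ forms a single equivalence class.

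Part (3) is where the real content lies, and I expect it to be the main obstacle. I would argue one connected component $C$ of $G$ at a time. For each vertex $v \in C$ let $D_v \subseteq \{0,\dots,n-1\}$ be the set of directions $k$ with $\{v, v\#k\} \in G$. The crucial step is that $D_v$ is constant along $C$: if $w = v\#k$ with $k \in D_v$, then for any $\ell \in D_v \setminus \{k\}$ the two adjacent edges $\{v, v\#k\}$ and $\{v, v\#\ell\}$ of the $(k,\ell)$-square lie in $G$, so square-closure forces the whole square in, and in particular $\{w, w\#\ell\} \in G$; this gives $D_v \subseteq D_w$, and by symmetry $D_v = D_w$. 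Connectivity of $C$ then produces a common set $D := D_v$. Fixing a base vertex $v_0 \in C$, every $G$-edge at a vertex of $C$ points in a direction of $D$, so every vertex of $C$ has the form $v_0 \# S$ with $S \subseteq D$; conversely, flipping directions of $D$ one at a time from $v_0$ (each step legal since $D_v = D$) reaches every such $v_0 \# S$, and for $u = v_0\#S$ and $k \in D = D_u$ the edge $\{u, u\#k\}$ is present while no further edges occur. Hence the induced subgraph on these vertices is exactly a copy of $Q_{|D|}$, so $C$ is a sub-hypercube and $G$ is the disjoint union of its components (a component carrying no edge being a degenerate $0$-dimensional cube, if such are admitted). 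The subtle points to get right are the direction-constancy step and the verification that no edge outside the subcube can arise, and for both the parity bookkeeping of Proposition \ref{hyp::prop:common_neighbors_in_hypercubes} again ensures the squares behave as expected.
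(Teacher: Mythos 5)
Parts (2) and (3) of your proposal are correct. Part (2) is in substance the paper's own argument: your ``profiles'' are exactly the data of an edge in $E_n^{(k)}$, and $\sim_0$ being single-bit-flip adjacency of profiles is the paper's observation that one passes between $k$-edges by flipping one non-$k$ digit at a time. Part (3) is organized differently from the paper -- the paper fixes a global direction set $K$ and propagates the flip property along a path by induction, while you show the local direction set $D_v$ is constant along a component and then identify the component with the coordinate face $\{v_0 \# S : S \subseteq D\}$ -- but the engine is the same (square-closure transports an incident direction across an edge), and your version is, if anything, more careful about the distinction between vertex containment and edge containment in $G$.

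The gap is in part (1). You declare at the outset that a sub-hypercube is \emph{by definition} a coordinate face (a set of free directions together with frozen values for the remaining digits), after which (1) is a tautology. That is not the notion the lemma is about: in the paper a sub-hypercube $Q' \subset Q_n$ is any subgraph isomorphic to some $Q_m$ -- its proof of (1) starts from an arbitrary embedding $\varphi \colon Q_m \to Q_n$ -- and the entire content of (1) is precisely that such an embedded copy is axis-aligned, i.e.\ misses some direction $k$ and therefore lies in $Q_n^{(+k)}$ or $Q_n^{(-k)}$. Your proposal assumes exactly this nontrivial fact. It is true, but it requires an argument: for instance, normalize $\varphi(0)=0$ by an automorphism, note that the neighbors $2^\ell$ of $0$ in $Q_m$ must map to single-digit flips $2^{k_\ell}$ with distinct $k_\ell$, and then propagate over the whole image (this is the paper's route); alternatively, and closer in spirit to your own part (3), observe that the image of an embedded $Q_m$ is square-closed in $Q_n$ -- two adjacent edges of the image lie in a square of $Q_m$, whose image is a $4$-cycle of $Q_n$, and by Proposition \ref{hyp::prop:common_neighbors_in_hypercubes} every $4$-cycle of $Q_n$ is a coordinate square -- so part (3) shows the image is a coordinate face, which, being proper, omits at least one direction $k$. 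Without some such argument, part (1) is assumed rather than proved.
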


\begin{proof}
    Ad (1). Assume that $Q^\prime \cong Q_m$ for some $m < n$ and let $\varphi: \{0, \dots, 2^m-1\} \to \{0, \dots, 2^n-1\}$ be an embedding of $Q_m$ into $Q_n$. Without loss of generality $\varphi(0) = 0$. Then, for every $\ell < m$ there is some $k_\ell < n$ such that
    \begin{align*}
        \varphi(2^\ell) = 2^{k_\ell}
    \end{align*}
    holds. Indeed, $2^\ell$ is obtained from $0$ by flipping the $\ell$-th binary digit, and therefore $\varphi(2^\ell)$ must be obtained from $\varphi(0) = 0$ by flipping exactly one binary digit as well, for $\varphi$ respects the edge relation of the involved hypercubes. Clearly, the $k_\ell$ are pairwise distinct for different $\ell < m$. Now, one readily checks $Q^\prime \subset Q_n^{(-k)}$ for any $k \not \in \{k_\ell \mid \ell < m\}$.

    Ad (2). Phrasing the equivalence relation in other terms, one has $i_1 j_1 \sim_0 i_2 j_2$ if and only if there is some $k \neq \ell < n$ such that $j_2 = i_1 \# \ell$ and $i_2 = j_1 \# \ell$.
    Clearly, starting from some edge $i j \in E_n^{(k)}$ one can obtain any other edge $i^\prime j^\prime \in E_n^{(k)}$ by successively flipping binary digits of $i$ and $j$ other than the $k$-th digit (at the same time for both $i$ and $j$). Each such step corresponds to an application of $\sim_0$, and thus the statement follows.

    Ad (3). Without loss of generality assume that $G \subset Q_n$ is connected. Evidently, $G$ is a sub-hypercube of $Q_n$ if its vertex set $\{i < 2^n \mid i \in G\}$ is closed under flipping $k$-th binary digits where $k$ ranges over a set $K \subset \{0, \dots, n-1\}$ of indices. Let 
    $$
    K := \{k < n \mid \exists x \in G: x \# k \in G\},
    $$
    and let $i \in G$ be arbitrary. We need to show that $i \# k$ is in $G$ as well. By the definition of $K$ there exists some $j \in G$ with $j \# k \in G$, and since $G$ is connected there is a path $i_1 \dots i_m$ in $G$ which connects $i = i_1$ and $j = i_m$.
    
    The assumption from the statement tells us that $(x \# k ) \# \ell$ is in $G$ whenever $x \in G$ and both $x \# k$ and $x \# \ell$ are in $G$. We prove $i \# k \in G$ by induction over $m$. If $m = 1$ there is nothing to show. Otherwise, one has without loss of generality $i_m = i_{m-1} \# \ell$ for some $\ell \neq k$. It follows
    \begin{align*}
        i_{m-1} \# k
            &= (i_m \# \ell) \# k \in G,
    \end{align*}
    i.e. the vertex $i_{m-1}$ already has the property we required from $i_m$. Hence, the statement follows by induction.
\end{proof}

\section{Hypercube C*-algebras are subhomogeneous}
\label{sec::subhomogeneous}

In this section we prove that every hypercube C*-algebra $C^\ast(Q_n)$ is subhomogeneous, and more particularly, for every irreducible representation $\rho$ of $C^\ast(Q_n)$ and projection $p_x$ we have that $\rho(p_x)$ is either zero or a rank-one projection. We prove this by analyzing paths in the hypercube with the aim of showing
\begin{align*}
    p_\mu p_\nu = p_\nu p_\mu
\end{align*}
for any two loops $\mu$ and $\nu$ in $Q_n$.

To do that, let us describe a path $\mu$ in $Q_n$ in another way. Given $\mu = x_1 \dots x_{m+1}$ with vertices $x_1, \dots, x_{m+1} \in U_n \cup V_n$ one can choose indices $k_1, \dots, k_m < n$ such that
\begin{align*}
    x_{i+1} = x_i \# k_i
\end{align*}
holds for all $i \leq m$. In operational terms, the index $k_i$ records in which direction the path $\mu$ continues at the vertex $x_i$. It is possible to recover the whole path $\mu$ from the starting vertex $x_1$ and the sequence $k_1 \dots k_m$. Therefore, we may use the following notation.

\begin{notation}
    Given a vertex $x \in U_n \cup V_n$ and a sequence of indices $k_1, \dots, k_m < n$ we write
    \begin{align*}
        [x; \, k_1 \dots k_m]
    \end{align*}
    for the path $\mu = x_1 \dots x_{m+1}$ given by
    \begin{align*}
        x_1 &= x, &\text{ and } &&
        x_{i+1} &= x_i \# k_i \quad \text{for } i \leq m.
    \end{align*}
\end{notation}

\begin{proposition}
    \label{sub::prop:operation_on_paths}
    \begin{enumerate}
        \item For every path $\mu$ in $Q_n$ there is a unique vertex $x \in U_n \cup V_n$ and a unique sequence $k_1 \dots k_m$ of indices with $k_i < n$ such that
        \begin{align*}
            \mu = [x; \, k_1 \dots k_m].
        \end{align*}
        \item The path $[x; \, k_1 \dots k_m]$ is a loop if and only if for every $\ell < n$ the set $\{i \leq m \mid k_i = \ell\}$ has an even number of elements.
    \end{enumerate}
\end{proposition}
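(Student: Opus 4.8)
The plan is to treat the two parts separately, reducing everything to elementary bookkeeping about which binary digits get flipped along the path.

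For part (1), I would first establish existence. Setting $x := s(\mu) = x_1$, I use that consecutive vertices $x_i, x_{i+1}$ of a path are adjacent in $Q_n$, so by the definition of $E_n$ there is some $k_i < n$ with $x_{i+1} = x_i \# k_i$; collecting these indices gives $\mu = [x;\, k_1 \dots k_m]$. For uniqueness, the starting vertex must equal $s(\mu)$ and the length $m+1$ equals the number of vertices of $\mu$, so both $x$ and $m$ are forced. The only remaining point is that each index $k_i$ is uniquely determined by the pair $(x_i, x_{i+1})$: this follows because the $n$ candidate neighbors $x_i \# 0, \dots, x_i \#(n-1)$ of $x_i$ are pairwise distinct, since $x_i \# k$ and $x_i \# k'$ differ precisely in the $k$-th and $k'$-th digit whenever $k \neq k'$. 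Hence the map $k \mapsto x_i \# k$ is injective and $k_i$ is recovered from $x_{i+1}$, which yields uniqueness of the whole representation.

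For part (2), the central observation is that the digit-flip operations $\# k$ are commuting involutions: one has $(x \# k) \# k = x$ and $(x \# k) \# \ell = (x \# \ell) \# k$ for $k \neq \ell$, both immediate from Notation \ref{pre::notation:binary_numbers} since flipping the $k$-th digit toggles it and leaves all other digits unchanged. Consequently the endpoint $r(\mu) = x \# k_1 \# \cdots \# k_m$ depends only on how many times each index occurs. Tracking digits one at a time, the $\ell$-th binary digit of $r(\mu)$ equals $[x]_\ell + c_\ell \bmod 2$, where $c_\ell := |\{i \leq m \mid k_i = \ell\}|$, because only the steps with $k_i = \ell$ affect that digit and each such step toggles it. Therefore $r(\mu) = s(\mu) = x$ holds if and only if $[x]_\ell + c_\ell \equiv [x]_\ell \pmod 2$ for every $\ell < n$, i.e. if and only if every $c_\ell$ is even, which is exactly the stated loop condition.

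I do not expect a genuine obstacle here, as the content is purely combinatorial; the only care needed is to phrase the commuting-involution property cleanly. Equivalently, one may identify $\# k$ with the bitwise XOR by $2^k$, so that $r(\mu) = x \oplus 2^{k_1} \oplus \cdots \oplus 2^{k_m}$ and the loop condition becomes $\bigoplus_{i \leq m} 2^{k_i} = 0$, which visibly amounts to each index $\ell < n$ appearing an even number of times among the $k_i$.
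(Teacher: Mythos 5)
Your proposal is correct and follows essentially the same route as the paper: part (1) is read off from the definition of the edges of $Q_n$ (your injectivity of $k \mapsto x_i \# k$ just makes this explicit), and part (2) rests on exactly the paper's observation that the $\ell$-th binary digit of the endpoint is $[x]_\ell + |\{i \leq m \mid k_i = \ell\}| \bmod 2$. The paper compresses this into two sentences; your version merely spells out the bookkeeping.
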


\begin{proof}
    The first statement follows from the definition of edges in $Q_n$. The second statement is true since the $\ell$-th binary digit of the endpoint of $[x; \, k_1 \dots k_m]$ is equal to $[x]_\ell + |\{i \leq m \mid k_i = \ell\}| \mod 2$.
\end{proof}

\begin{lemma}
    Let $\mu = [x; \, k_1 \dots k_i k_{i+1} \dots k_m]$ be a path in $Q_n$ as above, and obtain 
    $$
    \mu = [x; \, k_1 \dots k_{i-1} k_{i+1} k_i k_{i+2} \dots k_m]
    $$ 
    by swapping the elements $k_i$ and $k_{i+1}$ in the sequence $k_1 \dots k_m$ for some $i < m$.
    \begin{enumerate}
        \item If $k_i \neq k_{i+1}$, then $p_\mu = - p_\nu$.
        \item If $k_i = k_{i+1}$, then $p_\mu = p_\nu$.
    \end{enumerate}
\end{lemma}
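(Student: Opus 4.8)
The plan is to reduce the statement to a purely local identity involving a single square in $Q_n$ and then to establish that identity from the defining relations \eqref{bga::eq:partition_relation} and \eqref{bga::eq:orthogonality_relation}.

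First I would compare the vertex sequences of $\mu$ and $\nu$. Writing $\mu = x_1 \dots x_{m+1}$, the two paths share the common prefix $k_1 \dots k_{i-1}$, so they agree on $x_1, \dots, x_i$; in particular they share the vertex $y := x_i$. Since flipping two binary digits commutes, one has $(y \# k_i) \# k_{i+1} = (y \# k_{i+1}) \# k_i$, so both paths also agree on the vertex $z := x_{i+2}$ and, by induction along the common suffix $k_{i+2} \dots k_m$, on all subsequent vertices $x_{i+2}, \dots, x_{m+1}$. Hence $\mu$ and $\nu$ can differ only at position $i+1$, where $\mu$ passes through $a := y \# k_i$ and $\nu$ through $b := y \# k_{i+1}$. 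If $k_i = k_{i+1}$ then $a = b$ and the two paths coincide, which immediately gives $p_\mu = p_\nu$ and settles case (2). From now on I assume $k_i \neq k_{i+1}$.

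Factoring out the common prefix and suffix, $p_\mu = (p_{x_1} \cdots p_{x_{i-1}})\, p_y p_a p_z\, (p_{x_{i+3}} \cdots p_{x_{m+1}})$ and likewise for $p_\nu$ with $p_y p_b p_z$ in the middle. Thus it suffices to prove the local identity $p_y p_a p_z = - p_y p_b p_z$. Here $y$ and $z$ are distinct (they differ in the two digits $k_i \neq k_{i+1}$) and lie in the same part of the bipartition, while $a$ and $b$ are common neighbors of $y$ and $z$; by Proposition \ref{hyp::prop:common_neighbors_in_hypercubes} these are the \emph{only} common neighbors of $y$ and $z$.

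The crux is then a short computation. Since $y, z$ belong to the same vertex class and are distinct, and projections summing to the unit are pairwise orthogonal, relation \eqref{bga::eq:partition_relation} forces $p_y p_z = 0$. Inserting the partition of unity $\sum_v p_v = 1$ of the opposite vertex class between $p_y$ and $p_z$ and using \eqref{bga::eq:orthogonality_relation} to discard every term whose middle projection is not a common neighbor of both $y$ and $z$, I obtain
\begin{align*}
  0 = p_y p_z = p_y \Big( \sum_v p_v \Big) p_z = p_y p_a p_z + p_y p_b p_z,
\end{align*}
which is exactly $p_y p_a p_z = -p_y p_b p_z$, completing case (1). The only genuine obstacle is the bookkeeping that isolates the two surviving summands, and this is precisely where Proposition \ref{hyp::prop:common_neighbors_in_hypercubes} enters; everything else is formal manipulation of the relations.
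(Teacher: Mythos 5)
Your proof is correct and follows essentially the same route as the paper's: reduce to the middle triple $p_y p_a p_z$ via the common prefix/suffix, invoke Proposition \ref{hyp::prop:common_neighbors_in_hypercubes} to identify $a,b$ as the only common neighbors, and derive the sign flip by inserting a partition of unity. If anything, your version of the final computation (inserting only the opposite class's partition into $0 = p_y p_z$) is slightly cleaner than the paper's, which inserts the sum over \emph{all} vertices minus $p_{y_{i+1}}$, but the underlying argument is identical.
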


\begin{proof}
    If $k_i = k_{i+1}$, then $\mu = \nu$ and the statement is trivial. So, let us assume $k_i \neq k_{i+1}$.
    Write $\mu = x_1 \dots x_{m+1}$ and $\nu = y_1 \dots y_{m^\prime + 1}$ for vertices $x_j, y_j \in U_n \cup V_n$. Clearly,
    \begin{align*}
        x_j = y_j \quad \text{ for all } j \leq i.
    \end{align*}
    Further, observe that 
    \begin{align*}
        x_{i+2} = (x_{i} \# k_i) \# k_{i+1} = (x_{i} \# k_{i+1}) \# k_i = (y_{i} \# k_{i+1}) \# k_i = y_{i+2},
    \end{align*}
    since it makes no difference in which order binary digits are flipped.
    Hence, we have as well
    \begin{align*}
        x_j = y_j \quad \text{ for all } j \geq i+2.
    \end{align*}
    Let us compare the elements $p_{x_{i}} p_{x_{i+1}} p_{x_{i+2}}$ and $p_{y_i} p_{y_{i+1}} p_{y_{i+2}}$. First, note that the terms differ only in the middle factor, for $x_i = y_i$ and $x_{i+2} = y_{i+2}$. One readily checks that the involved vertices form a subgraph of $Q_n$ isomorphic to the square as in Figure \ref{sub::fig:square_in_hypercube}.
    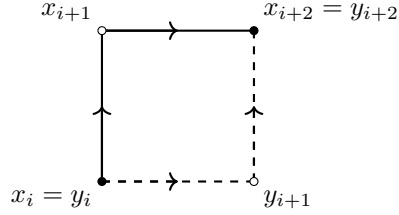
\begin{figure}[ht]
        \begin{tikzpicture}
            \coordinate (A) at (0, 0);
            \coordinate (B) at (2, 0);
            \coordinate (C) at (2, 2);
            \coordinate (D) at (0, 2);
            
            \draw [thick] (A) -- (D) -- (C) ;
            \draw[->, thick] (A) -- (0,1);
            \draw[->, thick] (D) -- (1,2);

            \draw[thick, dashed] (A) -- (B);
            \draw[thick, dashed] (B) -- (C);
            \draw[->, thick, dashed] (B) -- (2,1);
            \draw[->, thick, dashed] (A) -- (1,0);
            
            \filldraw[black] (A) circle (1.5pt) node[below left] {$x_i = y_i$}; 
            \filldraw[fill=white, draw=black] (B) circle (1.5pt) node[below right] {$y_{i+1}$}; 
            \filldraw[black] (C) circle (1.5pt) node[above right] {$x_{i+2} = y_{i+2}$}; 
            \filldraw[fill=white, draw=black] (D) circle (1.5pt) node[above left] {$x_{i+1}$};   
        \end{tikzpicture}
        \caption{The subgraph of $Q_n$ induced by $x_{i-1}, x_i, x_{i+1}, x_i^\prime$}
        \label{sub::fig:square_in_hypercube}
    \end{figure}

    In view of Proposition \ref{hyp::prop:common_neighbors_in_hypercubes} the vertices $x_{i+1}$ and $y_{i+1}$ are the only common neighbors of $x_i$ and $x_{i+1}$. Therefore, by \eqref{bga::eq:orthogonality_relation} we have
    \begin{align*}
        p_{x_i} p_{z} p_{x_{i+2}} = 0
    \end{align*}
    for every vertex $z \in (U_n \cup V_n) \setminus \{x_{i+1}, y_{i+1}\}$. With \eqref{bga::eq:partition_relation} it follows
    \begin{align*}
        p_{x_i} p_{x_{i+1}} p_{x_{i+2}} 
            &= p_{x_i} \left( \sum_{z \in U_n \cup V_n} p_z - p_{y_{i+1}} \right) p_{x_{i+2}} \\
            &= p_{x_i} p_{x_{i+2}} - p_{x_i} p_{y_{i+1}} p_{x_{i+2}}.
    \end{align*}
    The first term in the sum on the right-hand side vanishes by \eqref{bga::eq:partition_relation}. Hence, we obtain
    \begin{align*}
        p_{x_i} p_{x_{i+1}} p_{x_{i+2}} = - p_{x_i} p_{y_{i+1}} p_{x_{i+2}} = - p_{y_i} p_{y_{i+1}} p_{y_{i+2}},
    \end{align*}
    and the statement follows immediately.
\end{proof}

\begin{lemma}
    \label{sub::lemma:loop_elements_commute}
    Let $\mu = [x; \, k_1 \dots k_m]$ and $\nu = [x; \, \ell_1 \dots \ell_{m^\prime}]$ be two loops in $Q_n$ starting at $x \in U_n \cup V_n$. Then $p_\mu p_\nu = p_\nu p_\mu$.
\end{lemma}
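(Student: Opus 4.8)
The plan is to reduce the commutator identity to a purely combinatorial parity count, using the preceding swapping lemma together with the loop criterion of Proposition \ref{sub::prop:operation_on_paths}.

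First I would observe that, since $p_x$ is idempotent and both loops start and end at $x$, the products $p_\mu p_\nu$ and $p_\nu p_\mu$ are themselves path elements attached to concatenated loops. Writing $\mu = [x;\, k_1 \dots k_m]$ and $\nu = [x;\, \ell_1 \dots \ell_{m'}]$, the repeated middle factor $p_x p_x = p_x$ collapses, so that
\begin{align*}
  p_\mu p_\nu = p_{[x;\, k_1 \dots k_m \ell_1 \dots \ell_{m'}]}, \qquad
  p_\nu p_\mu = p_{[x;\, \ell_1 \dots \ell_{m'} k_1 \dots k_m]}.
\end{align*}
Thus it suffices to show that these two path elements agree.

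Next I would pass from the first index sequence to the second by moving the block $\ell_1 \dots \ell_{m'}$ in front of the block $k_1 \dots k_m$ through a succession of adjacent transpositions, arranged so that each pair $(k_i, \ell_j)$ is swapped exactly once, for a total of $m m'$ swaps. By the swapping lemma above, a swap of two \emph{distinct} indices contributes a factor $-1$, while a swap of two equal indices leaves the element unchanged. Hence
\begin{align*}
  p_{[x;\, k_1 \dots k_m \ell_1 \dots \ell_{m'}]} = (-1)^{N}\, p_{[x;\, \ell_1 \dots \ell_{m'} k_1 \dots k_m]},
\end{align*}
where $N = |\{(i,j) : k_i \neq \ell_j\}|$ counts the sign-changing swaps.

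The crux is then to verify that $N$ is even, which is where the loop hypothesis enters. Writing $a_c$ and $b_c$ for the number of occurrences of the index $c$ in the sequences $k_1 \dots k_m$ and $\ell_1 \dots \ell_{m'}$ respectively, one has $m m' = \sum_{c,d} a_c b_d$ and $|\{(i,j) : k_i = \ell_j\}| = \sum_c a_c b_c$, so that $N = \sum_{c \neq d} a_c b_d$. Since $\mu$ and $\nu$ are loops, Proposition \ref{sub::prop:operation_on_paths} forces every $a_c$ and every $b_c$ to be even; consequently each term $a_c b_d$ is even, and therefore $N$ is even. The accumulated sign is thus $+1$, the two path elements coincide, and $p_\mu p_\nu = p_\nu p_\mu$ follows. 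I expect this parity bookkeeping — rather than any analytic input — to be the only genuine obstacle, and it is resolved cleanly by the loop criterion.
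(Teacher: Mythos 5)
Your proposal is correct and follows essentially the same route as the paper: reduce to the concatenated path elements, apply the swapping lemma, and use the loop criterion of Proposition \ref{sub::prop:operation_on_paths} to show the total sign is $+1$. The only difference is bookkeeping — you count all $mm'$ sign-changing swaps at once via $N = \sum_{c \neq d} a_c b_d$, while the paper moves each $\ell_j$ across the $k$-block one at a time — and note that in both arguments only the evenness of the $k$-sequence multiplicities (i.e.\ $\mu$ being a loop) is actually needed.
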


\begin{proof}
    Evidently, we have
    \begin{align*}
        p_\mu p_\nu &= p_{[x; \, k_1 \dots k_m \ell_1 \dots \ell_{m^\prime}]}
    \end{align*}
    as well as
    \begin{align*}
        p_\nu p_\mu &= p_{[x; \, \ell_1 \dots \ell_{m^\prime} k_1 \dots k_m]}.
    \end{align*}
    From the previous lemma we obtain immediately
    \begin{align*}
        p_\mu p_\nu = \pm p_\nu p_\mu.
    \end{align*}
    It only remains to check that the sign on the right-hand side of this identity is positive. To see this, note that the path $[x; \, k_1 \dots k_m]$ is a loop. Therefore, every $\ell < n$ features an even number of times in the sequence $k_1 \dots k_m$. Now, to move the index $\ell_1$ from the right to the left in the combined sequence $k_1 \dots k_m \ell_1 \dots \ell_{m^\prime}$, one needs to swap it first with $k_m$, then with $k_{m-1}$, and so on. Each time $\ell_1$ is swapped with an index $k_i \neq \ell_1$ the sign of the corresponding algebra element $p_{[x; \, \cdot]}$ changes. However, since there is an even number of indices $k_i \neq \ell_1$ this sign change occurs an even number of times. Hence, in the end we have
    \begin{align*}
        p_{[x; \, k_1 \dots k_m \ell_1 \dots \ell_{m^\prime}]} = p_{[x; \, \ell_1 k_1 \dots k_m \ell_2 \dots \ell_{m^\prime}]}.
    \end{align*}
    Repeating this argument successively for the indices $\ell_2, \dots, \ell_{m^\prime}$ yields the claim.
\end{proof}

\begin{thm} 
    \label{hyp::thm:subhomogeneous:hypercube_algebra_is_subhomogeneous}
    The $C^\ast$-algebra $C^\ast(Q_n)$ is $2^{n-1}$-subhomogeneous, and one has for all vertices $x \in U_n \cup V_n$
    \begin{align}
        \label{sub::eq:rank_one_representations}
        \mathrm{dim}(\rho(p_x) \mathcal{H}) \leq 1
    \end{align}
    for every irreducible representation $\rho$ of $C^\ast(Q_n)$ on a Hilbert space $\mathcal{H}$. We call such a representation \emph{rank-one}. 
\end{thm}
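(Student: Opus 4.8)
The plan is to reduce the statement to the commutativity of each corner $p_x C^\ast(Q_n) p_x$ and then feed this into a standard irreducibility argument. First I would show that $p_x C^\ast(Q_n) p_x$ is commutative. By Proposition \ref{prop::bipartite_graph_algebras:definition:dense_subset} the elements $p_\mu$ span a dense subspace of $C^\ast(Q_n)$, so the images $p_x p_\mu p_x$ span a dense subspace of the corner. I would observe that such an image can only be nonzero if $\mu = y_1 \dots y_k$ begins and ends at $x$ or at a neighbor of $x$: indeed $p_x p_{y_1} = 0$ unless $y_1 = x$ (using \eqref{bga::eq:partition_relation} for distinct vertices on the same side) or $y_1 \sim x$ (using \eqref{bga::eq:orthogonality_relation} for vertices on different sides), and symmetrically for $p_{y_k} p_x$. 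After deleting a leading or trailing $x$ if present, prepending and appending $x$ realizes every nonzero $p_x p_\mu p_x$ as $p_\sigma$ for a loop $\sigma$ based at $x$. By Lemma \ref{sub::lemma:loop_elements_commute} any two such loop elements commute, hence $p_x C^\ast(Q_n) p_x$ is commutative as the closure of a commutative subalgebra.

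Next, fix an irreducible representation $\rho$ on $\mathcal{H}$ and a vertex $x$, and set $e := \rho(p_x)$. I would prove \eqref{sub::eq:rank_one_representations} by the following operator-algebraic argument. Irreducibility gives $\rho(C^\ast(Q_n))' = \C 1$, so by the bicommutant theorem $\rho(C^\ast(Q_n))$ is dense in $B(\mathcal{H})$ for the strong operator topology. The compression $T \mapsto eTe$ is a composition of left and right multiplication by the fixed operator $e$ and is therefore strongly continuous, so it maps the strongly dense set $\rho(C^\ast(Q_n))$ to a strongly dense subset of $e B(\mathcal{H}) e = B(e\mathcal{H})$. Thus $\rho(p_x C^\ast(Q_n) p_x) = e \rho(C^\ast(Q_n)) e$ is strongly dense in $B(e\mathcal{H})$. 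Since the corner is commutative, this set is a commutative $\ast$-algebra of operators, so its strong closure $B(e\mathcal{H})$ is commutative too; this forces $\dim(e\mathcal{H}) \leq 1$, which is exactly \eqref{sub::eq:rank_one_representations}.

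Finally I would obtain subhomogeneity from the partition of unity. Applying $\rho$ to $\sum_{u \in U_n} p_u = 1$ yields an orthogonal decomposition $\mathcal{H} = \bigoplus_{u \in U_n} \rho(p_u)\mathcal{H}$ into subspaces each of dimension at most $1$ by the previous step, whence
\[
  \dim \mathcal{H} = \sum_{u \in U_n} \dim\big(\rho(p_u)\mathcal{H}\big) \leq |U_n| = 2^{n-1},
\]
the last equality holding because exactly half of the integers below $2^n$ have even digit parity. Hence every irreducible representation has dimension at most $2^{n-1}$, i.e. $C^\ast(Q_n)$ is $2^{n-1}$-subhomogeneous.

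I expect the main obstacle to be the passage from a commutative corner to the rank-one bound in the middle step; the cleanest formulation uses that left and right multiplication by the fixed projection $e$ are strongly continuous, so that commutativity survives in the strong closure $B(e\mathcal{H})$. The combinatorial bookkeeping in the first step --- checking that the nonzero compressions of path elements are exactly loop elements based at $x$ --- is routine once the adjacency cases are spelled out, and the subhomogeneity count is then immediate.
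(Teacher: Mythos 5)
Your proof is correct and takes essentially the same route as the paper: both reduce the theorem to commutativity of the corner $p_x C^\ast(Q_n) p_x$ by identifying it with the closed span of loop elements $p_\mu$ and invoking Lemma \ref{sub::lemma:loop_elements_commute}, and then obtain the bound $2^{n-1}$ from the partition $1 = \sum_{u \in U_n} p_u$ with $|U_n| = 2^{n-1}$. The only difference is that where the paper cites the general fact \cite[IV.1.1.7]{blackadar_operator_2006} to pass from a commutative corner to $\mathrm{dim}(\rho(p_x)\mathcal{H}) \leq 1$, you unpack it with the standard argument (irreducibility gives strong density of $\rho(C^\ast(Q_n))$ in $B(\mathcal{H})$, compression by $\rho(p_x)$ preserves strong density, and the strong closure of a commutative $\ast$-algebra is commutative).
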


\begin{proof}
    Let $x \in U_n \cup V_n$. By Proposition \ref{prop::bipartite_graph_algebras:definition:dense_subset} the corner $p_x C^\ast(Q_n) p_x$ is the closed span of all elements $p_\mu$, where $\mu$ is a loop in $Q_n$ that starts at $x$. By Lemma \ref{sub::lemma:loop_elements_commute} the elements $p_\mu$ commute pairwise. Consequently, the corner $p_x C^\ast(Q_n) p_x$ is a commutative C*-algebra. From this, Equation \eqref{sub::eq:rank_one_representations} follows immediately with general C*-algebra arguments, see e.g. \cite[IV.1.1.7]{blackadar_operator_2006}. Evidently, then $C^\ast(Q_n)$ is $2^{n-1}$-subhomogeneous, for $1 = \sum_{x \in U_n} p_x$ and $|U_n| = 2^{n-1}$.
\end{proof}

\begin{corollary}
    The C*-algebra $C^\ast(Q_n)$ is nuclear for all $n \in \N \setminus \{0\}$.
\end{corollary}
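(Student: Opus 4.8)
The plan is to deduce the corollary at once from the preceding Theorem~\ref{hyp::thm:subhomogeneous:hypercube_algebra_is_subhomogeneous} together with the standard structure theory of type~I C*-algebras; all of the genuine work has already been done in establishing subhomogeneity, so what remains is merely to invoke the general principle that a subhomogeneous C*-algebra is automatically nuclear.

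Concretely, Theorem~\ref{hyp::thm:subhomogeneous:hypercube_algebra_is_subhomogeneous} tells us that $C^\ast(Q_n)$ is $2^{n-1}$-subhomogeneous. Every irreducible representation $\rho$ is then finite-dimensional, so by irreducibility its image is the full matrix algebra acting on the representation space and in particular contains the compact operators; hence $C^\ast(Q_n)$ is liminal and, a fortiori, of type~I. I would then cite the well-known fact that every type~I C*-algebra is nuclear, see e.g.\ \cite{blackadar_operator_2006}.

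If one prefers a self-contained reason for this last implication, the dimension function $\rho \mapsto \dim(\rho)$ on the spectrum is lower semicontinuous, so the irreducible representations of maximal dimension form an open subset and thus correspond to an ideal which is homogeneous; iterating this construction produces a finite composition series
\begin{align*}
    0 = I_0 \subset I_1 \subset \dots \subset I_r = C^\ast(Q_n)
\end{align*}
whose subquotients $I_j / I_{j-1}$ are homogeneous. By the Fell--Tomiyama--Takesaki description a homogeneous C*-algebra is the section algebra of a locally trivial $M_\ell$-bundle over its compact spectrum and is therefore nuclear, and since nuclearity is preserved under extensions it propagates up the series to $C^\ast(Q_n)$.

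There is no real obstacle remaining; the only point demanding a little care is that one should \emph{not} argue nuclearity merely from the embedding of $C^\ast(Q_n)$ into $C(\Delta_{n-1}, M_{2^{n-1}})$ furnished later by Theorem~\ref{hyp::thm:hypercube_cstar_algebras_as_continuous_functions}, since nuclearity is not inherited by arbitrary C*-subalgebras. It is the subhomogeneity (equivalently, type~I) argument above that makes the conclusion legitimate.
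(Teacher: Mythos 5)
Your proposal is correct and matches the paper's (implicit) argument exactly: the paper states this corollary without proof precisely because it follows at once from Theorem~\ref{hyp::thm:subhomogeneous:hypercube_algebra_is_subhomogeneous} and the standard fact that subhomogeneous (hence type~I) C*-algebras are nuclear, which is the route you take. Your additional composition-series justification and the warning against arguing via the later embedding into $C(\Delta_{n-1}, M_{2^{n-1}})$ are both sound, just not needed beyond citing the standard result.
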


\begin{remark}
    In view of Problem \ref{intro::problem:nuclearity_implied_by_K23_not_subgraph} we would like to show more generally for any graph $G$ with $K_{2,3} \not \subset G$ that the corners $p_x C^\ast(G) p_x$ are commutative.
    However, the arguments used in the proof of the previous theorem and its preceding lemmas rely heavily on the particular structure of $Q_n$ as a hypercube, and they cannot be easily generalized.
\end{remark}

\section{Rank-one representations and edge weightings}
\label{sec::hypercube_rank_one_representations}

In this section we describe the rank-one representations of $C^\ast(Q_n)$ using certain complex edge weightings of $Q_n$. This will provide the necessary tools for classifying all irreducible representations of $C^\ast(Q_n)$ in the next section.

Recall that we call a representation $\rho$ of $C^\ast(Q_n)$ rank-one if every projection $\rho(p_x)$ for $x \in U_n \cup V_n$ is either zero or a rank-one projection. In view of Theorem \ref{hyp::thm:subhomogeneous:hypercube_algebra_is_subhomogeneous}, every irreducible representation of $C^\ast(Q_n)$ is rank-one. 

\begin{definition}
    \label{reps::def:admissible_weightings}
    Let $c: E_n \to \C$ be a complex weighting of the edges of $Q_n$ and let 
    $$
    Q_n(c) = (U_n(c), V_n(c), E_n(c))
    $$
    be the subgraph of $Q_n$ induced by the edges $ij \in E_n$ with $c(ij) \neq 0$.
    We call the weighting $c$ \emph{admissible} if it satisfies
    \begin{align}
        \label{wei::eq:admissible_weighting_condition1}
        \sum_{i \in \mathcal{N}(j_1) \cap \mathcal{N}(j_2)} c(ij_1) \overline{c(ij_2)} = \delta_{j_1j_2} \qquad \text{ for all } j_1, j_2 \in V_n(c),
    \end{align}
    as well as
    \begin{align}
        \label{wei::eq:admissible_weighting_condition2}
        \sum_{j \in \mathcal{N}(i_1) \cap \mathcal{N}(i_2)} c(i_1j) \overline{c(i_2j)} = \delta_{i_1i_2} \qquad \text{ for all } i_1, i_2 \in U_n(c),
    \end{align}
    where $\mathcal{N}(x)$ with $x \in U_n \cup V_n$ denotes the set of neighbors of the vertex $x$ in $Q_n$.
\end{definition}

The following lemma is needed to construct rank-one representations from admissible edge weightings. It will also be crucial later when we exploit the description of rank-one representations by edge weightings to classify all irreducible representations of $C^\ast(Q_n)$.

\begin{lemma}
    \label{reps::lemma:admissible_weightings_and_squares}
    Let $c: E_n \to \C$ be an admissible edge weighting of the hypercube $Q_n$, i.e. $c$ satisfies \eqref{wei::eq:admissible_weighting_condition1} and \eqref{wei::eq:admissible_weighting_condition2}. Further, assume that $x_1, x_2, y_1, y_2 \in U_n \cup V_n$ are four vertices which induce a subgraph of $Q_n$ that is isomorphic to a square as in Figure \ref{reps::fig:square_with_edges_efgh}. 
    \begin{enumerate}
        \item If $c(e) \neq 0$ and $c(f) \neq 0$, then we have $c(g) \neq 0$ and $c(h) \neq 0$ as well, and
        \begin{align*}
            \frac{c(e)}{c(f)} = - \overline{\left( \frac{c(g)}{c(h)} \right)}.
        \end{align*}
        \item In that case, we also have 
        \begin{align*}
            |c(e)| = |c(g)| \quad \text{ and } \quad |c(f)| = |c(h)|.
        \end{align*}
        \item The induced subgraph $Q_n(c) \subset Q_n$ is a disjoint union of sub-hypercubes.
    \end{enumerate}
\end{lemma}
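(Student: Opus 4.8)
The plan is to feed the admissibility conditions \eqref{wei::eq:admissible_weighting_condition1} and \eqref{wei::eq:admissible_weighting_condition2} into the single square at hand and extract everything from the two orthogonality relations they produce there. Write the square's $U_n$-vertices as $x_1,x_2$ and its $V_n$-vertices as $y_1,y_2$, so that its four edges are $x_1y_1, x_1y_2, x_2y_1, x_2y_2$, and $e,f,g,h$ are these four labelled as in Figure \ref{reps::fig:square_with_edges_efgh}, with $e,f$ the adjacent pair meeting at one vertex and $g,h$ the adjacent pair meeting at the opposite vertex. By Proposition \ref{hyp::prop:common_neighbors_in_hypercubes} the only common neighbours of $x_1,x_2$ in $Q_n$ are $y_1,y_2$, and the only common neighbours of $y_1,y_2$ are $x_1,x_2$; hence the sums in \eqref{wei::eq:admissible_weighting_condition2} (applied to $i_1=x_1,i_2=x_2$) and in \eqref{wei::eq:admissible_weighting_condition1} (applied to $j_1=y_1,j_2=y_2$) each collapse to exactly two terms, giving two homogeneous relations among $c(x_1y_1),c(x_1y_2),c(x_2y_1),c(x_2y_2)$. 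This reduction, which localises the global admissibility conditions to the square, is the one genuinely geometric input and should be recorded first.

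For part (1), I would read off the relation in which the product $c(e)\overline{c(f)}$ appears: it has the shape $c(e)\overline{c(f)} + c(h)\overline{c(g)} = 0$ (up to the exact bookkeeping of the figure), so $c(h)\overline{c(g)} = -c(e)\overline{c(f)} \neq 0$ by hypothesis, forcing both $c(g)\neq 0$ and $c(h)\neq 0$. The second relation then reads $c(e)\overline{c(h)} + c(f)\overline{c(g)} = 0$; dividing by $c(f)\overline{c(h)}$ (now permissible) yields $c(e)/c(f) = -\overline{c(g)/c(h)}$, the claimed identity. For part (2), I would take absolute values in both relations, obtaining $|c(e)|\,|c(f)| = |c(g)|\,|c(h)|$ and $|c(e)|\,|c(h)| = |c(f)|\,|c(g)|$; multiplying these gives $|c(e)|^2 = |c(g)|^2$ and dividing them gives $|c(f)|^2 = |c(h)|^2$ (all four moduli being nonzero by part (1)), hence $|c(e)| = |c(g)|$ and $|c(f)| = |c(h)|$.

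For part (3), the key observation is that part (1) says precisely that $Q_n(c)$ is closed under squares in the sense of Lemma \ref{hyp::lemma:properties_of_hypercubes2}(3): whenever two adjacent edges of a square of $Q_n$ lie in $Q_n(c)$ (i.e.\ carry nonzero weight), the remaining two edges do as well. Since the square and the two admissibility conditions are symmetric under interchanging the two sides and the two vertices on a side, part (1) applies to any adjacent pair, so this closure holds for every square. Lemma \ref{hyp::lemma:properties_of_hypercubes2}(3) then immediately gives that $Q_n(c)$ is a disjoint union of sub-hypercubes.

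The only real obstacle I anticipate is the bookkeeping: making sure the two collapsed admissibility relations are matched to the correct pairs $\{e,f\}$ and $\{g,h\}$ of the figure so that the signs and conjugations in part (1) come out exactly as stated, and confirming that ``$e,f$ adjacent'' (rather than parallel) is what the hypothesis $c(e),c(f)\neq 0$ encodes --- if $e,f$ were a parallel pair the conclusion would fail, since then $c(g)=c(h)=0$ is consistent with both relations. Everything past the reduction to two relations is elementary algebra.
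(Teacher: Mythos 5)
Your proof is correct and follows essentially the same route as the paper: localize the admissibility conditions to the square via Proposition \ref{hyp::prop:common_neighbors_in_hypercubes}, extract the two collapsed two-term relations, and deduce (3) from (1) together with Lemma \ref{hyp::lemma:properties_of_hypercubes2}(3). One difference is worth recording, and it is in your favor: for part (1) the paper writes down only the relation $c(e)\overline{c(h)} + c(f)\overline{c(g)} = 0$ coming from \eqref{wei::eq:admissible_weighting_condition1} and asserts that the statement follows by a standard calculation; but that single relation is trivially satisfied when $c(g)=c(h)=0$, so by itself it cannot force the nonvanishing of $c(g)$ and $c(h)$. Your step of first invoking the other relation $c(e)\overline{c(f)} + c(h)\overline{c(g)} = 0$ from \eqref{wei::eq:admissible_weighting_condition2}, whose left-hand side contains the nonzero product $c(e)\overline{c(f)}$, is exactly what is needed; it also guarantees that $y_2$ lies in $V_n(c)$, so that \eqref{wei::eq:admissible_weighting_condition1} may legitimately be applied to the pair $y_1,y_2$ afterwards (a point both you and the paper leave implicit, though in your ordering it is harmless). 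For part (2) you multiply and divide the two modulus identities directly, whereas the paper applies part (1) a second time to the adjacent pair $f,g$ and combines the resulting ratios; both are equally valid short computations. Your closing observation that the hypothesis must concern an adjacent rather than a parallel pair of edges is also correct: a weighting supported on two opposite edges of a square is admissible, consistent with part (3).
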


    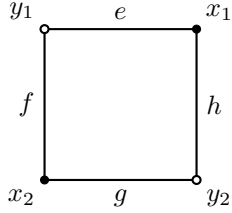
\begin{figure}[ht]
        \begin{tikzpicture}
            \coordinate (A) at (0,0);
            \coordinate (B) at (2,0);
            \coordinate (C) at (2,2);
            \coordinate (D) at (0,2);

            \draw[thick] (A) -- (B) node[midway, below] {$g$};
            \draw[thick] (B) -- (C) node[midway, right] {$h$};
            \draw[thick] (C) -- (D) node[midway, above] {$e$};
            \draw[thick] (D) -- (A) node[midway, left] {$f$};

            \filldraw[black] (A) circle (1.5pt) node[below left] {$x_2$};
            \filldraw[fill=white, draw=black, thick] (B) circle (1.5pt) node[below right] {$y_2$};
            \filldraw[black] (C) circle (1.5pt) node[above right] {$x_1$};
            \filldraw[fill=white, draw=black, thick] (D) circle (1.5pt) node[above left] {$y_1$};
        \end{tikzpicture}
        \label{reps::fig:square_with_edges_efgh}
        \caption{A square in the hypercube $Q_n$ with edges $e, f, g, h$}
    \end{figure}

\begin{proof}
    Ad (1). The vertices $y_1$ and $y_2$ have exactly two common neighbors in $Q_n$, namely $x_1$ and $x_2$, see Proposition \ref{hyp::prop:common_neighbors_in_hypercubes}. Therefore, condition \eqref{wei::eq:admissible_weighting_condition1} or \eqref{wei::eq:admissible_weighting_condition2} for weightings gives
    \begin{align*}
        c(e) \overline{c(h)} + c(f) \overline{c(g)} = 0.
    \end{align*}
    The statement follows immediately by a standard calculation.

    Ad (2). The previous statement entails
    \begin{align*}
        \left| \frac{c(e)}{c(f)} \right| = \left| \frac{c(g)}{c(h)} \right|,
    \end{align*}
    where all terms $c(e), c(f), c(g), c(h)$ are non-zero by assumption and (1). Similarly, one obtains
    \begin{align*}
        \left| \frac{c(f)}{c(g)} \right| = \left| \frac{c(h)}{c(e)} \right|.
    \end{align*}
    It follows
    \begin{align*}
        |c(e)| 
            = |c(g)| \left| \frac{c(f)}{c(g)} \right| \cdot \left| \frac{c(e)}{c(f)} \right|
            = |c(g)| \left| \frac{c(h)}{c(e)} \right| \cdot \left| \frac{c(g)}{c(h)} \right|
            = \frac{|c(g)|^2}{|c(e)|}.
    \end{align*}
    Therefore, $|c(e)|^2 = |c(g)|^2$, and the claim follows.

    Ad (3). In view of (1), $Q_n(c)$ has the following property: Whenever it contains two adjacent edges of a square in $Q_n$, then it contains all four edges of the square. Thus, the claim follows directly from Lemma \ref{hyp::lemma:properties_of_hypercubes2}(3).
\end{proof}

\begin{definition}
    \label{reps::def:representation_rho_c_induced_by_weighting}
    Given an admissible edge weighting $c: E_n \to \C$ of $Q_n$, let 
    $$
    \rho_c: C^\ast(Q_n) \to M_{U_n(c)} \cong B(\C^{U_n(c)})
    $$
    be the unique representation that extends the assignment
    \begin{align*}
        p_i &\mapsto E_{ii} \in M_{U_n(c)}, && i \in U_n(c), \\
        p_j &\mapsto \left[ c(i_1j) \overline{c(i_2j)} \right]_{i_1, i_2 \in U_n} \in M_{U_n(c)}, && j \in V_n(c), \\
        p_x &\mapsto 0, && x \in (U_n \cup V_n) \setminus (U_n(c) \cup V_n(c)).
    \end{align*}
    We say that the representation $\rho_c$ is induced by the weighting $c$. 
\end{definition}

\begin{proposition}
    \label{reps::prop:admissible_weighting_induces_representation}
    The representation $\rho_c$ in the previous definition is well-defined.
    In particular, for every $j \in V_n(c)$ the matrix $\rho_c(p_j)$ is the projection onto the span of the vector
    \begin{align*}
        \psi_j = \left[ c(ij) \right]_{i \in U_n(c)} \in \C^{U_n(c)}.
    \end{align*}
\end{proposition}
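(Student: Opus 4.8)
The plan is to invoke the universal property of the bipartite graph C*-algebra $C^\ast(Q_n)$: it suffices to check that the matrices assigned to the generators $p_x$ are projections in $M_{U_n(c)}$ satisfying the defining relations \eqref{bga::eq:partition_relation} and \eqref{bga::eq:orthogonality_relation}, from which the $*$-homomorphism $\rho_c$ then exists and is unique. The first observation, which also yields the ``in particular'' claim, is that for $j \in V_n(c)$ the matrix assigned to $p_j$ factors as $\psi_j \psi_j^\ast$ with $\psi_j = [c(ij)]_{i \in U_n(c)}$, since $(\psi_j \psi_j^\ast)_{i_1 i_2} = c(i_1 j)\overline{c(i_2 j)}$. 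Such a rank-one matrix is the orthogonal projection onto $\C \psi_j$ precisely when $\|\psi_j\|^2 = 1$, and this is exactly the diagonal case $j_1 = j_2 = j$ of \eqref{wei::eq:admissible_weighting_condition1}, namely $\sum_{i \in \mathcal{N}(j)} |c(ij)|^2 = 1$. The matrices $E_{ii}$ and $0$ are trivially projections, so every generator maps to a projection.

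Next I would verify the two partition-of-unity relations \eqref{bga::eq:partition_relation}. On the $U$-side this is immediate, as $\sum_{i \in U_n} \rho_c(p_i) = \sum_{i \in U_n(c)} E_{ii}$ is the identity of $M_{U_n(c)}$. On the $V$-side I would argue entrywise, adopting the convention $c(ij) = 0$ whenever $\{i,j\} \notin E_n$; since $\rho_c(p_j) = 0$ for $j \notin V_n(c)$, the $(i_1,i_2)$-entry of $\sum_{j \in V_n} \rho_c(p_j)$ equals $\sum_{j \in V_n(c)} c(i_1 j)\overline{c(i_2 j)}$. A product $c(i_1 j)\overline{c(i_2 j)}$ is nonzero only when $j$ is a common neighbor of $i_1$ and $i_2$, so the sum collapses to $\sum_{j \in \mathcal{N}(i_1) \cap \mathcal{N}(i_2)} c(i_1 j)\overline{c(i_2 j)}$, which equals $\delta_{i_1 i_2}$ by \eqref{wei::eq:admissible_weighting_condition2}. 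Hence $\sum_{j \in V_n} \rho_c(p_j) = I$ as well.

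Finally I would check the orthogonality relation \eqref{bga::eq:orthogonality_relation}. For $i \in U_n$, $j \in V_n$ with $\{i,j\} \notin E_n$, if either vertex lies outside $Q_n(c)$ the corresponding image is $0$ and the product vanishes; otherwise $\rho_c(p_i)\rho_c(p_j) = E_{ii}\psi_j\psi_j^\ast$ has $(i_1,i_2)$-entry $\delta_{i_1 i}\, c(ij)\overline{c(i_2 j)}$, which is zero because $c(ij) = 0$ for the non-edge $ij$. Orthogonality of the $p_u$ within $U_n$, and of the $p_v$ within $V_n$, need not be checked separately, since it is forced automatically by the partition-of-unity relation inside the C*-algebra $M_{U_n(c)}$. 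Assembling these three points gives all defining relations, so $\rho_c$ is well-defined.

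I do not expect a genuine obstacle here; the content is a direct translation of \eqref{wei::eq:admissible_weighting_condition1} and \eqref{wei::eq:admissible_weighting_condition2} into matrix identities. The only point requiring care is bookkeeping: maintaining the convention $c \equiv 0$ on non-edges, and observing that the sums defining the matrix entries nominally range over $U_n$ or $V_n$ yet are effectively supported on common neighbors, so that the two admissibility conditions apply verbatim.
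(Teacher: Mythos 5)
Your proof is correct, but it takes a genuinely different route from the paper's at the one nontrivial step, namely the $V$-side partition of unity. The paper first invokes Lemma \ref{reps::lemma:admissible_weightings_and_squares}(3) to see that $Q_n(c)$ is a disjoint union of sub-hypercubes, hence $|U_n(c)| = |V_n(c)|$; combined with the pairwise orthogonality of the $\psi_j$ coming from \eqref{wei::eq:admissible_weighting_condition1}, the $\psi_j$ then form an orthonormal basis of $\C^{U_n(c)}$, so the projections onto their spans sum to the identity. You instead verify $\sum_{j \in V_n} \rho_c(p_j) = 1$ entrywise: each $(i_1,i_2)$-entry collapses to a sum over common neighbors of $i_1$ and $i_2$, and \eqref{wei::eq:admissible_weighting_condition2} applies verbatim. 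Your route is more elementary and self-contained: it avoids the structural lemma on squares and sub-hypercubes entirely, it uses the two admissibility conditions symmetrically (\eqref{wei::eq:admissible_weighting_condition1} for normalization, \eqref{wei::eq:admissible_weighting_condition2} for completeness), and the counting fact $|U_n(c)| = |V_n(c)|$ drops out of your identity for free by taking traces, rather than being needed as an input. What the paper's argument buys is the geometric picture of the $\psi_j$ as an orthonormal basis, which is reused later (e.g. in Lemma \ref{hyp::lemma:properties_of_admissible_weightings}(3)), and the structural lemma it leans on is needed elsewhere in the paper anyway. Your closing observation is also correct: \eqref{bga::eq:partition_relation} and \eqref{bga::eq:orthogonality_relation} are the only defining relations, so orthogonality within each vertex class never has to be checked — projections summing to the unit in a C*-algebra are automatically pairwise orthogonal.
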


\begin{proof}
    The second claim is true since the vector $\psi_j$ is normalized due to \eqref{wei::eq:admissible_weighting_condition1}. Thus, the elements $\rho_c(p_x)$ are projections for all $x \in U_n \cup V_n$.
    We need to check that the operators $\rho_c(p_x)$ for $x \in U_n(c) \cup V_n(c)$ satisfy the relations \eqref{bga::eq:partition_relation} and \eqref{bga::eq:orthogonality_relation} from Definition \ref{bipartite_graph_algebras:definition}. First, let us note that $|U_n(c)| = |V_n(c)|$ holds since by Lemma \ref{reps::lemma:admissible_weightings_and_squares}(3) the graph $Q_n(c)$ is a disjoint union of sub-hypercubes.

    Evidently, the $E_{ii}$ form a partition of unity. At the same time \eqref{wei::eq:admissible_weighting_condition1} guarantees that the vectors $\psi_j$ are orthogonal to each other.  What's more, because of $|V_n(c)| = |U_n(c)|$ the vectors $\psi_j$ form an orthogonal basis of $\C^{U_n(c)}$. Hence, the projections onto the spans of the $\psi_j$ form a partition of unity in $B(\C^{U_n(c)})$ as well. Further, one readily checks that $\psi_j$ is orthogonal to $e_i \in \C^{U_n(c)}$ if $ij \not \in E_n$ since the $i$-th entry of $\psi_j$ vanishes. Thus, the universal property of $C^\ast(Q_n)$ yields the desired representation $\rho_c$.
\end{proof}

Let us shortly summarize the notation that was introduced so far. Given an (admissible) edge weighting $c: E_n \to \C$ of $Q_n$, we denote
\begin{itemize}
    \item by $Q_n(c) = (U_n(c), V_n(c), E_n(c))$ the subgraph of $Q_n$ induced by the edges where $c$ does not vanish,
    \item by $\rho_c: C^\ast(Q_n) \to B(\C^{U_n(c)})$ the representation of $C^\ast(Q_n)$ induced by $c$.
\end{itemize}

\begin{example}
    \label{reps::ex:admissible_weighting_example}
    As an example, consider the square $Q_2$ with edges $e, f, g, h$ as in Figure \ref{reps::fig:square_with_edges_efgh}. For any $t \in [0,1]$ and $\bar{t} := 1-t$ an admissible edge weighting is given by
    \begin{align*}
        c(e) = \sqrt{t}, \quad c(f) = \sqrt{\bar{t}}, \quad c(g) = \sqrt{t}, \quad c(h) = -\sqrt{\bar{t}}.
    \end{align*}
    We sketch this weighting in Figure \ref{reps::fig:admissible_weighting_example}.
    \begin{figure}[ht]
        \begin{tikzpicture}
            \coordinate (A) at (0,0);
            \coordinate (B) at (2,0);
            \coordinate (C) at (2,2);
            \coordinate (D) at (0,2);
            \draw[thick] (A) -- (B) node[midway, below] {$\sqrt{t}$};
            \draw[thick] (B) -- (C) node[midway, right] {$-\sqrt{\bar{t}}$};
            \draw[thick] (C) -- (D) node[midway, above] {$\sqrt{t}$};
            \draw[thick] (D) -- (A) node[midway, left] {$\sqrt{\bar{t}}$};
            \filldraw[black] (A) circle (1.5pt) node[below left] {$x_2$};
            \filldraw[fill=white, draw=black, thick] (B) circle (1.5pt) node[below right] {$y_2$};
            \filldraw[black] (C) circle (1.5pt) node[above right] {$x_1$};
            \filldraw[fill=white, draw=black, thick] (D) circle (1.5pt) node[above left] {$y_1$};
        \end{tikzpicture}
        \caption{An admissible edge weighting of $Q_2$}
        \label{reps::fig:admissible_weighting_example}
    \end{figure}
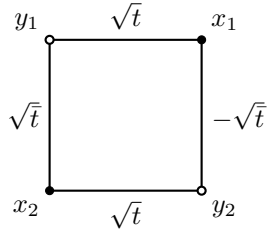
    If $t \in (0,1)$, then $Q_2(c) = Q_2$, but if $t =0$ ($t=1$), then $Q_2(c)$ is obtained from $Q_2$ by removing the edges $e,g$ ($f,h$, resp.). In any event, $Q_2(c) \subset Q_2$ is a disjoint union of sub-hypercubes. 
    
    To compute the representation $\rho_c$ let us assume that $\{x_1, x_2\} = U_2$ and $\{y_1, y_2\} = V_2$. Then, for any $t \in [0,1]$ the representation $\rho_c: C^\ast(Q_2) \to B(\C^2)$ is given
    \begin{align*}
        \rho_c(p_{x_1}) &= \begin{pmatrix}
            1 & 0 \\
            0 & 0
        \end{pmatrix}, & &&
        \rho_c(p_{y_1}) &= \begin{pmatrix}
            t & \sqrt{t \bar{t}} \\
            \sqrt{t \bar{t}} & \bar{t}
        \end{pmatrix} =
        \begin{pmatrix}
            \sqrt{t} & \sqrt{\bar{t}}
        \end{pmatrix}
        \begin{pmatrix}
            \sqrt{t} \\
            \sqrt{\bar{t}}
        \end{pmatrix}, \\
        \rho_c(p_{x_2}) &= \begin{pmatrix}
            0 & 0 \\
            0 & 1
        \end{pmatrix}, & &&
        \rho_c(p_{y_2}) &= \begin{pmatrix}
            \bar{t} & -\sqrt{t \bar{t}} \\
            -\sqrt{t \bar{t}} & t
        \end{pmatrix} =
        \begin{pmatrix}
            -\sqrt{\bar{t}} \\
            \sqrt{t}
        \end{pmatrix}
        \begin{pmatrix}
            -\sqrt{\bar{t}} &  \sqrt{t}
        \end{pmatrix}.
    \end{align*}
\end{example}

\begin{lemma}
    \label{hyp::lemma:properties_of_admissible_weightings}
    Let $c: E_n \to \C$ be an admissible weighting of the hypercube $Q_n$, and let $Q_n(c) \subset Q_n$ be as in Definition \ref{reps::def:admissible_weightings}. Then the following holds:
    \begin{enumerate}
        \item Assume that $Q_n(c) = \bigcup_\ell Q_n^{(\ell)}$ for disjoint sub-hypercubes $Q_n^{(\ell)} \subset Q_n$. Then the representation $\rho_c: C^\ast(Q_n) \to B(\C^{U_n(c)})$ induced by $c$ is a direct sum of irreducible representations $\rho^{(\ell)}$ which are vanishing on $p_x$ for $x \not \in Q_n^{(\ell)}$, respectively. In particular, $\rho_c$ is irreducible if and only if $Q_n(c)$ is a single sub-hypercube of $Q_n$.
        \item If $c^\prime: E_n \to \C$ is another admissible weighting of $Q_n$, then the representations $\rho_c$ and $\rho_{c^\prime}$ are unitarily equivalent if and only if $Q_n(c^\prime) = Q_n(c)$ and there are numbers $\lambda_x \in S^1$ for $x \in U_n(c) \cup V_n(c)$ such that
        \begin{align*}
            c^\prime(ij) = \lambda_i \lambda_j c(ij) \qquad \text{ for all } ij \in E_n(c).
        \end{align*}
        \item For any rank-one representation $\rho: C^\ast(Q_n) \to B(\mathcal{H})$ of the hypercube algebra $C^\ast(Q_n)$ on a Hilbert space $\mathcal{H}$ there exists an admissible weighting $c: E_n \to \C$ such that $\rho$ is unitarily equivalent to the representation $\rho_c$.
    \end{enumerate}
\end{lemma}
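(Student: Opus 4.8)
The plan is to treat the three parts in order, building on Definition \ref{reps::def:representation_rho_c_induced_by_weighting}, Proposition \ref{reps::prop:admissible_weighting_induces_representation}, and the structural result Lemma \ref{reps::lemma:admissible_weightings_and_squares}(3).

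For (1), I would first record that by Lemma \ref{reps::lemma:admissible_weightings_and_squares}(3) the graph $Q_n(c)$ splits into connected components, each a full sub-hypercube $Q_n^{(\ell)}$, so that $U_n(c) = \bigsqcup_\ell U_n^{(\ell)}$ and $\C^{U_n(c)} = \bigoplus_\ell \C^{U_n^{(\ell)}}$. The key point is that $c(ij) = 0$ whenever $i$ and $j$ lie in different components, hence for $j \in V_n^{(\ell)}$ the vector $\psi_j = [c(ij)]_i$ is supported in the block $\C^{U_n^{(\ell)}}$; together with $\rho_c(p_i) = E_{ii}$ this shows that every generator, and therefore the whole representation, is block diagonal with respect to this decomposition. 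The $\ell$-th block is exactly the representation induced by the restriction of $c$ to $Q_n^{(\ell)}$ and vanishes on $p_x$ for $x \notin Q_n^{(\ell)}$. To see each block is irreducible I would compute its commutant via Schur's lemma: a matrix $T$ commuting with all $E_{ii}$ must be diagonal, $T = \mathrm{diag}(t_i)$, and commuting additionally with each rank-one projection onto $\psi_j$ forces $T\psi_j = \mu_j \psi_j$; comparing $i$-th entries along an edge $ij$ with $c(ij) \neq 0$ gives $t_i = \mu_j$. Since the sub-hypercube $Q_n^{(\ell)}$ is connected, walking along alternating paths forces all $t_i$ to coincide, so $T$ is scalar.

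For (2), I would first extract the modulus of the weights from the representation itself: the identity $\rho_c(p_i p_j p_i) = |c(ij)|^2 \rho_c(p_i)$, immediate from $\rho_c(p_i) = E_{ii}$ and $\rho_c(p_j) = \psi_j\psi_j^*$, shows $|c(ij)|$ is a unitary invariant of $\rho_c$. Hence $\rho_c \cong \rho_{c'}$ forces $|c(ij)| = |c'(ij)|$ for every edge, in particular the same support, so $Q_n(c) = Q_n(c')$. Fixing an intertwining unitary $U$ and using $\rho_c(p_i) = \rho_{c'}(p_i) = E_{ii}$, the relation $U E_{ii} U^* = E_{ii}$ shows $U = \mathrm{diag}(\lambda_i)$ with $\lambda_i \in S^1$. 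Then $U \psi_j \psi_j^* U^* = \psi_j' (\psi_j')^*$ between unit vectors forces $U\psi_j = \mu_j \psi_j'$ for some $\mu_j \in S^1$, which entrywise reads $\lambda_i c(ij) = \mu_j c'(ij)$; setting $\lambda_j := \overline{\mu_j}$ yields $c'(ij) = \lambda_i \lambda_j c(ij)$. The converse is a direct check: given the phases, $U := \mathrm{diag}(\lambda_i)$ intertwines $\rho_c$ and $\rho_{c'}$.

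For (3), I would exploit the orthogonality that is automatic here: since $\sum_{i \in U_n} p_i = 1$ is a sum of projections, they are pairwise orthogonal (from $p_i(1-p_i)p_i = 0$ and positivity), and likewise along $V_n$. For a rank-one $\rho$, the nonzero projections $\rho(p_i)$, $i \in U' := \{i : \rho(p_i) \neq 0\}$, are pairwise orthogonal rank-one projections summing to $1_\mathcal{H}$, giving an orthonormal basis $(e_i)_{i \in U'}$ with $\rho(p_i) = e_i e_i^*$ and an identification $\mathcal{H} \cong \C^{U'}$; similarly the $\rho(p_j)$ project onto unit vectors $\psi_j$ with $|V'| = |U'| = \dim\mathcal{H}$, so $(\psi_j)$ is an orthonormal basis. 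I would then set $c(ij) := [\psi_j]_i$ for $i \in U'$, $j \in V'$, and $c := 0$ otherwise; relation \eqref{bga::eq:orthogonality_relation} via $\rho(p_i)\rho(p_j) = 0$ for $ij \notin E_n$ gives $[\psi_j]_i = 0$ off neighbors, so \eqref{wei::eq:admissible_weighting_condition1} and \eqref{wei::eq:admissible_weighting_condition2} reduce exactly to orthonormality of the $\psi_j$ and $\sum_j \psi_j\psi_j^* = 1$. Finally $U_n(c) = U'$, $V_n(c) = V'$, and $\rho_c(p_x) = \rho(p_x)$ on generators, whence $\rho \cong \rho_c$. The main obstacle I expect is the irreducibility step in (1): everything hinges on turning the commutant condition into the edgewise equalities $t_i = \mu_j$ and then using connectivity of the sub-hypercube to collapse them to one scalar, and the bookkeeping of supports (which $c(ij)$ vanish, and on which block each $\psi_j$ lives) must be handled carefully so that both the block decomposition and the commutant computation are airtight; the other parts are comparatively routine once the identifications $\mathcal{H} \cong \C^{U'}$ and $U = \mathrm{diag}(\lambda_i)$ are established.
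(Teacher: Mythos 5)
Your proposal is correct and follows essentially the same route as the paper: the same block decomposition of $\C^{U_n(c)}$ along the connected components of $Q_n(c)$ in (1), the same diagonal-intertwiner argument in (2), and the same construction of the weighting $c$ from the vectors $\psi_j$ in (3). The only deviations are local and equally valid: in (1) you establish irreducibility by showing the commutant is scalar via Schur's lemma (using connectivity to propagate $t_i = \mu_j$ along edges), whereas the paper shows $\rho_c(C^\ast(Q_n)) = M_{U_n(c)}$ by producing matrix units from paths; and in (2) your observation that $\rho_c(p_i p_j p_i) = |c(ij)|^2 \rho_c(p_i)$ makes $|c(ij)|$ a unitary invariant, which yields the edge-support equality $Q_n(c) = Q_n(c^\prime)$ slightly more directly than the paper's derivation.
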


\begin{proof}
    Ad (1). First, let $U_n^{(\ell)} := U_n \cap Q_n^{(\ell)} \subset U_n(c)$ be the set of vertices from $U_n(c)$ contained in $Q_n^{(\ell)}$ for all $\ell$. Then the partition
    \begin{align*}
        U_n(c) = \bigcup_\ell U_n^{(\ell)}
    \end{align*}
    yields a natural block matrix form for the matrices in $M_{U_n(c)} = B(\C^{U_n(c)})$. In particular, the diagonal blocks correspond to the sub-hypercubes $Q_n^{(\ell)}$. We claim that $\rho_c(p_j)$ vanishes outside the diagonal block corresponding to the sub-hypercube that contains $j$. 
    
    Indeed, one readily checks that $\rho_c(p_j)$ has a non-zero entry in the $(i, i^\prime)$-th position if and only if $ij \in E_n(c)$ and $i^\prime j \in E_n(c)$. This is the case if and only if both $i$ and $i^\prime$ are contained in the same sub-hypercube $Q_n^{(\ell)}$ as the vertex $j$. Consequently, $\rho_c(p_j)$ is non-vanishing only on the diagonal block corresponding to the sub-hypercube $Q_n^{(\ell)}$ that contains $j$. It follows that $\rho_c$ is a direct sum of representations $\rho^{(\ell)}$ which are vanishing on $p_x$ for $x \not \in Q_n^{(\ell)}$, respectively.
    
    Further, if $Q_n(c)$ consists of a single sub-hypercube, then for any two vertices $i, i^\prime \in U_n(c)$ there is a path $x_1 x_2 \dots x_m$ in $Q_n(c)$ which connects $i$ and $i^\prime$. One checks that the matrix
    \begin{align*}
        \rho(p_{x_1}) \rho(p_{x_2}) \cdots \rho(p_{x_m})
    \end{align*}
    has a single non-zero entry in the $(i, i^\prime)$-th position. Hence, $\rho_c(C^\ast(Q_n)) = M_{U_n(c)}$ and the representation is irreducible.
    
    Ad (2). If the representations $\rho_c$ and $\rho_{c^\prime}$ are unitarily equivalent, then the sets of vertices $x \in U_n \cup V_n$ with $\rho_c(p_x) \neq 0$ and $\rho_{c^\prime}(p_x) \neq 0$, respectively, coincide. Consequently, $U_n(c) = U_n(c^\prime)$. Let $\mathcal{U} \in B(\C^{U_n(c)})$ be a unitary matrix such that
    \begin{align*}
        \rho_{c^\prime}(\cdot) = \mathcal{U} \rho_c(\cdot) \mathcal{U}^\ast
    \end{align*}
    holds. Because of $\rho_{c^\prime}(p_i) = E_{ii} = \rho_c(p_i)$ for all $i \in U_n(c)$, the unitary $\mathcal{U}$ must be diagonal, and there are numbers $\lambda_i \in S^1$ such that
    \begin{align*}
        \mathcal{U} = \mathrm{diag}((\lambda_i)_{i \in U_n(c)}).
    \end{align*}
    It is not hard to see that $\mathcal{U} \rho_c(p_j) \mathcal{U}^\ast$ is the projection onto the span of $\mathcal{U} \psi_j$ for all $j \in V_n(c)$ where $\psi_j = (c(ij))_{i \in U_n(c)}$ is the vector from Proposition \ref{reps::prop:admissible_weighting_induces_representation}. Let $\psi_j^\prime = (c^\prime(ij))_{i \in U_n(c)} \in \C^{U_n(c)}$ for $j \in V_n(c)$ be the vectors such that $\rho_{c^\prime}(p_j)$ is the projection onto the span of $\psi_j^\prime$. Then, $\mathcal{U} \rho_c(p_j) \mathcal{U}^\ast = \rho_{c^\prime}(p_j)$ implies that the vectors
    \begin{align*}
        \mathcal{U} \psi_j = (\lambda_i c(ij))_{i \in U_n(c)} \quad \text{ and } \quad \psi_j^\prime = (c^\prime(ij))_{i \in U_n(c)}
    \end{align*}
    have the same span. This is true if and only if there are numbers $\lambda_j \in \C \setminus \{0\}$ for $j \in V_n(c)$ with
    \begin{align*}
        \lambda_i \lambda_j c(ij) = c^\prime(ij) \qquad \text{ for all } ij \in E_n(c).
    \end{align*}
    Because of conditions \eqref{wei::eq:admissible_weighting_condition1} and \eqref{wei::eq:admissible_weighting_condition2} the numbers $\lambda_j$ must be of modulus one as well. 
    
    This proves that unitarily equivalent induced representations $\rho_c$ and $\rho_{c^\prime}$ must satisfy the condition from the statement. Conversely, it is easy to see that this condition implies unitary equivalence of $\rho_c$ and $\rho_{c^\prime}$.

    Ad (3). Assume that $\rho: C^\ast(Q_n) \to B(\mathcal{H})$ is a rank-one representation of the hypercube algebra $C^\ast(Q_n)$ on a Hilbert space $\mathcal{H}$. Let $Q_n(\rho) = (U_n(\rho), V_n(\rho), E_n(\rho)) \subset Q_n$ be the subgraph induced by the vertices $x \in U_n \cup V_n$ with $\rho(p_x) \neq 0$. By applying a unitary transformation if necessary we may assume $\mathcal{H} = \C^{U_n(\rho)}$ and $\rho(p_i) = E_{ii}$ for all $i \in U_n(\rho)$. For $j \in V_n(\rho)$, $\rho(p_j)$ is the projection onto the span of some normalized vector $\psi_j \in \C^{U_n(\rho)}$. Let a weighting $c: E_n \to \C$ be given by
    \begin{align*}
        c(ij) := \begin{cases}
            [\psi_j]_i,     & \text{ if } \rho(p_j) \neq 0, \\
            0,              & \text{ otherwise.}
        \end{cases} 
    \end{align*}
    Note that $[\psi_j]_i = 0$ if $ij \not \in E_n$ since in this case $\rho(p_j)$ and $\rho(p_i)$ are orthogonal.
    One has $|U_n(\rho)| = |V_n(\rho)|$ for both the $\rho(p_i)$ and $\rho(p_j)$ for $i \in U_n, j \in V_n$ form a partition of unity in $M_{U_n(\rho)}$. Hence, the $\psi_j$ form an orthonormal basis of $\C^{U_n(\rho)}$ and satisfy \eqref{wei::eq:admissible_weighting_condition1}. The matrix which contains the $\psi_j$ as columns is, thus, unitary, and it follows that its rows form an orthonormal basis of $\C^{U_n(\rho)}$ as well. Therefore, \eqref{wei::eq:admissible_weighting_condition2} is satisfied, and $c$ is an admissible weighting of $Q_n$.
\end{proof}

To summarize, every rank-one representation of $C^\ast(Q_n)$ is (up to unitary equivalence) induced by an admissible edge weighting of $Q_n$. However, two different edge weightings can define unitarily equivalent representations. Therefore, we introduce below a particular class of edge weightings $c_\mathbf{t}$ depending on points $\mathbf{t}$ in a standard simplex. In the next section we will prove that the induced representations $\rho_\mathbf{t}$ cover every irreducible representation of $C^\ast(Q_n)$ in a suitable sense.

The definition below is a straight generalization of the edge weightings of $Q_2$ from Example \ref{reps::ex:admissible_weighting_example}.

\begin{definition}
    \label{weig::def:representation_rho_t}
    For every tuple $\mathbf{t} = [t_0, \dots, t_{n-1}] \in \Delta_{n-1}$ let an edge weighting $c_\mathbf{t}$ of $Q_n$ be given by
    \begin{align*}
        c_\mathbf{t}(ij) = (-1)^{\mathrm{par}_k(i)} \sqrt{t_k} \qquad \text{ for all } ij \in E_n,
    \end{align*}
    where $k < n$ is the unique number with $j = i \# k$ and $\mathrm{par}_k(i)$ is the sum of the first $(k+1)$-binary digits of $i$ modulo $2$ as in Notation \ref{pre::notation:binary_numbers}. 
    Let 
    $$
        \rho_\mathbf{t}: C^\ast(Q_n) \to M_{U_n}
    $$    
    be the representation of $C^\ast(Q_n)$ induced by the weighting $c_\mathbf{t}$.
\end{definition} 

\begin{proposition}
    The representation $\rho_\mathbf{t}$ in the previous definition is well-defined, i.e. the edge weighting $c_\mathbf{t}$ is admissible for all $\mathbf{t} \in \Delta_{n-1}$ and its induced subgraph $Q_n(c_\mathbf{t})$ satisfies $U_n(c_\mathbf{t}) = U_n$.
\end{proposition}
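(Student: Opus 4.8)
The plan is to verify the two admissibility conditions \eqref{wei::eq:admissible_weighting_condition1} and \eqref{wei::eq:admissible_weighting_condition2} directly, together with the support statement $U_n(c_\mathbf{t}) = U_n$. I would dispatch the support statement first, since it is immediate: for $i \in U_n$ and a direction $k < n$, the edge $ij$ with $j = i \# k$ carries the weight $c_\mathbf{t}(ij) = (-1)^{\mathrm{par}_k(i)} \sqrt{t_k}$, which is non-zero exactly when $t_k > 0$. As $\mathbf{t} \in \Delta_{n-1}$ forces $\sum_{k<n} t_k = 1$, at least one coordinate is strictly positive, so every vertex of $U_n$ is incident to an edge on which $c_\mathbf{t}$ does not vanish; hence $U_n(c_\mathbf{t}) = U_n$, and the same reasoning gives $V_n(c_\mathbf{t}) = V_n$.

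Since $c_\mathbf{t}$ is real-valued, the conjugations in the admissibility conditions are harmless. The diagonal cases are then painless: the neighbours of a fixed vertex are obtained by flipping each of the $n$ digits in turn, so
\begin{align*}
\sum_{i \in \mathcal{N}(j)} |c_\mathbf{t}(ij)|^2 = \sum_{k < n} t_k = 1
\end{align*}
for every $j \in V_n$, and symmetrically for every $i \in U_n$; this settles the $j_1 = j_2$ and $i_1 = i_2$ instances of \eqref{wei::eq:admissible_weighting_condition1} and \eqref{wei::eq:admissible_weighting_condition2}.

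The off-diagonal case of \eqref{wei::eq:admissible_weighting_condition1} is the heart of the matter. For distinct $j_1, j_2 \in V_n$, Proposition \ref{hyp::prop:common_neighbors_in_hypercubes} tells us that they have either no common neighbour, in which case the sum is empty and vanishes, or exactly two, which happens when $j_1$ and $j_2$ differ in precisely two digits $k < \ell$. In the latter case the two common neighbours are $i_1 = j_1 \# k$ and $i_2 = j_1 \# \ell$, and reading off which digit each of the four incident edges flips gives
\begin{align*}
c_\mathbf{t}(i_1 j_1) c_\mathbf{t}(i_1 j_2) + c_\mathbf{t}(i_2 j_1) c_\mathbf{t}(i_2 j_2) = \sqrt{t_k t_\ell} \left[ (-1)^{\mathrm{par}_k(i_1) + \mathrm{par}_\ell(i_1)} + (-1)^{\mathrm{par}_k(i_2) + \mathrm{par}_\ell(i_2)} \right].
\end{align*}
The step I expect to be the main obstacle is the parity bookkeeping inside the bracket. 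Here I would use that flipping the $m$-th digit of a number toggles $\mathrm{par}_p$ exactly when $m \leq p$: passing from $j_1$ to $i_1 = j_1 \# k$ toggles both $\mathrm{par}_k$ and $\mathrm{par}_\ell$ (as $k \leq \ell$), whereas passing to $i_2 = j_1 \# \ell$ toggles $\mathrm{par}_\ell$ only. Therefore $\mathrm{par}_k(i_1) + \mathrm{par}_\ell(i_1)$ and $\mathrm{par}_k(i_2) + \mathrm{par}_\ell(i_2)$ differ by one modulo two, the two signs are opposite, and the bracket vanishes.

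It remains to obtain \eqref{wei::eq:admissible_weighting_condition2}, which I would get for free rather than repeating the computation. Collecting the weights into the matrix $M = [c_\mathbf{t}(ij)]_{i \in U_n, \, j \in V_n}$ (with zero entries off $E_n$) yields a genuinely square matrix, since $|U_n| = |V_n| = 2^{n-1}$. Condition \eqref{wei::eq:admissible_weighting_condition1} says exactly that the columns of $M$ are orthonormal, i.e.\ $M^\ast M = I$; for a square matrix this forces $M M^\ast = I$, which is precisely \eqref{wei::eq:admissible_weighting_condition2}. (Alternatively, one repeats the parity argument above with the roles of $U_n$ and $V_n$ interchanged, which is entirely symmetric.) Admissibility together with the support statement then shows that $\rho_\mathbf{t}$ is well-defined.
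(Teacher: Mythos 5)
Your proof is correct and matches the paper's argument almost step for step: the same treatment of the support statement, the same diagonal computation, and the same two-common-neighbours parity analysis for the off-diagonal case of \eqref{wei::eq:admissible_weighting_condition1}. The only departure is minor but pleasant: where the paper disposes of \eqref{wei::eq:admissible_weighting_condition2} by noting that the symmetric computation goes through, you deduce it from \eqref{wei::eq:admissible_weighting_condition1} and $|U_n| = |V_n| = 2^{n-1}$ via $M^\ast M = I \Rightarrow M M^\ast = I$ for square matrices --- essentially the same trick the paper itself uses later in the proof of Lemma \ref{hyp::lemma:properties_of_admissible_weightings}(3).
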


\begin{proof}
    Let us first check $U_n(c_\mathbf{t}) = U_n$. Since $\mathbf{t} = [t_0, \dots, t_{n-1}] \in \Delta_{n-1}$ satisfies $\sum_k t_k = 1$, there exists at least one index $\ell$ with $t_\ell \neq 0$. Consequently, for every $i \in U_n$ one has $c_\mathbf{t}(ij) \neq 0$ if one chooses $j := i \# \ell$. Thus, $i$ is contained in the subgraph $Q_n(c_\mathbf{t})$, and it follows that $U_n(c_\mathbf{t}) = U_n$.

    It remains to check that $c_\mathbf{t}$ satisfies the conditions \eqref{wei::eq:admissible_weighting_condition1} and \eqref{wei::eq:admissible_weighting_condition2} from Definition \ref{reps::def:admissible_weightings}. One immediately observes for any fixed $j \in V_n$ and $i \in U_n$, respectively,
    \begin{align*}
        \sum_{i \in U_n} |c_\mathbf{t}(ij)|^2 = \sum_{k=0}^{n-1} t_k = 1 = \sum_{k=0}^{n-1} t_k = \sum_{j \in V_n} |c_\mathbf{t}(ij)|^2.
    \end{align*}
    On the other hand, for distinct $j_1, j_2 \in U_n$ recall from Proposition \ref{hyp::prop:common_neighbors_in_hypercubes} that there are either no or exactly two common neighbors $i_1, i_2 \in \mathcal{N}(j_1) \cap \mathcal{N}(j_2)$. In the latter case let $\ell < k$ be such that $j_1 = i_1 \# \ell$, $j_2 = i_2 \# \ell$ as well as $j_1 = i_2 \# k$ and $j_2 = i_1 \# k$ hold.
    Then it is
    \begin{align*}
        c(i_1 j_1) &\overline{c(i_1 j_2)} + c(i_2 j_1) \overline{c(i_2 j_2)} \\
            &= (-1)^{\mathrm{par}_\ell(i_1) + \mathrm{par}_k(i_1)} \sqrt{t_\ell} \sqrt{t_k} + (-1)^{\mathrm{par}_k(i_2) + \mathrm{par}_\ell(i_2)} \sqrt{t_k} \sqrt{t_\ell}.
    \end{align*}
    One checks that the two terms in this sum have opposite signs, and therefore the sum vanishes. Indeed, the binary representation of $i_1$ and $i_2$ differs exactly in the $\ell$-th and $k$-thm digit. As $\ell < k$ it follows $\mathrm{par}_\ell(i_1) \neq \mathrm{par}_\ell(i_2)$ and $\mathrm{par}_k(i_1) = \mathrm{par}_k(i_2)$.
    Thus, \eqref{wei::eq:admissible_weighting_condition1} is satisfied. The other condition \eqref{wei::eq:admissible_weighting_condition2} can be checked in the same way. 
\end{proof}

\begin{example}
    \label{hyp::example:representation_rho_t_for_the_cube_Q3}
    For $n=3$ the vertices in $U_3$ are labeled by the binary numbers $000, 011, 101$ and $110$. Similarly, the vertices in $V_3$ are labeled by $001, 010, 100, 111$. Thus, for $\mathbf{t} = [a^2, b^2, c^2] \in \Delta_2$ one verifies that $\rho_\mathbf{t}(p_j)$ for $j \in V_3$ is given by the projection onto $\psi_j \in \C^{U_3}$, respectively, where
    \begin{align*}
        \psi_{001} = \begin{pmatrix}
            a \\ b \\ c \\ 0
        \end{pmatrix},
        \quad
        \psi_{010} = \begin{pmatrix}
            b \\ - a \\ 0 \\ c
        \end{pmatrix},
        \quad
        \psi_{100} = \begin{pmatrix}
            c \\ 0 \\ - a \\ - b
        \end{pmatrix},
        \quad
        \psi_{111} = \begin{pmatrix}
            0 \\ c \\ - b \\ a
        \end{pmatrix}.
    \end{align*}
    Here, we identify $\C^{U_3}$ with $\C^4$ via
    \begin{align*}
        e_{000} := \begin{pmatrix}
            1 \\ 0 \\ 0 \\ 0
        \end{pmatrix},
        \quad
        e_{011} := \begin{pmatrix}
            0 \\ 1 \\ 0 \\ 0
        \end{pmatrix},
        \quad
        e_{101} := \begin{pmatrix}
            0 \\ 0 \\ 1 \\ 0
        \end{pmatrix},
        \quad
        e_{110} := \begin{pmatrix}
            0 \\ 0 \\ 0 \\ 1
        \end{pmatrix}.
    \end{align*}
\end{example}

At the end of this section we prove a technical lemma which describes $\rho_\mathbf{t}$ for a boundary point $\mathbf{t} \in \Delta_{n-1}$ as a direct sum of smaller representations $\rho_\mathbf{s}$ for suitable $\mathbf{s} \in \Delta_{n-2}$. This will be useful in the proof of Theorem \ref{irreps::thm:classification_of_irreps} where the irreducible representations of $C^\ast(Q_n)$ are classified.
To state the lemma let us introduce some notation.

\begin{notation}
    \label{weig::notation:pi_plus_minus}
    For $\ell < n$ and $y = \sum_{k=0}^{n-2} y_k 2^k < 2^{n-1}$ with $y_k \in \{0,1\}$, let
    \begin{align*}
        y^{(+\ell)} &:= \sum_{k=0}^{\ell-1} y_k 2^k + 2^\ell + \sum_{k=\ell}^{n-2} y_k 2^{k+1} < 2^n, \\
        y^{(-\ell)} &:= \sum_{k=0}^{\ell-1} y_k 2^k + \sum_{k=\ell}^{n-2} y_k 2^{k+1} < 2^n,
    \end{align*}
    be the numbers obtained from $y$ by inserting a $1$ or a $0$, respectively, at the $\ell$-th position in the binary representation of $y$. 

    Further, let $\pi^{(+\ell)}, \pi^{(-\ell)}: C^\ast(Q_n) \to C^\ast(Q_{n-1})$ be surjective $*$-homomorphisms given by
    \begin{align*}
        \pi^{(+\ell)}: p_x &\mapsto \begin{cases}
            \tilde{p}_y, & \exists y \in U_{n-1} \cup V_{n-1}: x = (y \# 0)^{(+\ell)}, \\
            0,           & \text{ otherwise},
        \end{cases} \\
        \pi^{(-\ell)}: p_x &\mapsto \begin{cases}
            \tilde{p}_y, & \exists y \in U_{n-1} \cup V_{n-1}: x = y^{(-\ell)}, \\
            0,           & \text{ otherwise}.
        \end{cases}
    \end{align*}
    Here we write $\tilde{p}_y$ for the canonical generators of $C^\ast(Q_{n-1})$ to distinguish them from the generators $p_x$ of $C^\ast(Q_n)$.
\end{notation}

Note that one has 
$$
U_n = \{y^{(-\ell)} \mid y \in U_{n-1}\} \cup \{(y \# 0)^{(+\ell)} \mid y \in U_{n-1}\}
$$ 
for all $\ell < n$. In fact, a number $x < 2^n$ is in $U_n$ iff its binary representation contains an even number of $1$s. 
If $x < 2^n$ is in $U_n$, then it has either a $0$ in the $\ell$-th position (in which case $x = y^{(-\ell)}$ for a suitable $y \in U_{n-1}$) or it has a $1$ in the $\ell$-th position (in which case $x = (y \# 0)^{(+\ell)}$ for a suitable $y \in U_{n-1}$).

The $\ast$-homomorphisms $\pi^{(+\ell)}$ and $\pi^{(-\ell)}$ exist by the universal property of $C^\ast(Q_n)$.

\begin{lemma}
    \label{reps::lemma:decomposition_of_rho_t_for_boundary_points}
    For $\mathbf{s} = [s_0, \dots, s_{n-2}] \in \Delta_{n-2}$ and $\ell < n$ let $\mathbf{t} \in \Delta_{n-1}$ be given by
    \begin{align*}
        \mathbf{t} = [t_0, \dots, t_{n-1}] := [s_0, \dots, s_{\ell-1}, 0, s_\ell, \dots, s_{n-2}].
    \end{align*}
    Then one has the unitary equivalence
    \begin{align*}
        \rho_\mathbf{t} \cong \left(\rho_\mathbf{s} \circ \pi^{(+\ell)}\right) \oplus \left(\rho_\mathbf{s} \circ \pi^{(-\ell)}\right).
    \end{align*}
\end{lemma}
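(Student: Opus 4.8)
The plan is to show that the representation $\rho_\mathbf{t}$ splits as an orthogonal direct sum along a decomposition of the index set $U_n$, and that the two summands are exactly the representations $\rho_\mathbf{s} \circ \pi^{(+\ell)}$ and $\rho_\mathbf{s} \circ \pi^{(-\ell)}$. Since $t_\ell = 0$ by the construction of $\mathbf{t}$, the weighting $c_\mathbf{t}$ vanishes on every edge in $E_n^{(\ell)}$, i.e. on all edges that flip the $\ell$-th binary digit. Therefore the induced subgraph $Q_n(c_\mathbf{t})$ contains no such edge, and since flipping the $\ell$-th digit is the only way to pass between $Q_n^{(+\ell)}$ and $Q_n^{(-\ell)}$, the graph $Q_n(c_\mathbf{t})$ decomposes as a disjoint union of its intersections with $Q_n^{(+\ell)}$ and $Q_n^{(-\ell)}$. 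By Lemma \ref{hyp::lemma:properties_of_admissible_weightings}(1) this disjointness already forces $\rho_\mathbf{t}$ to split as a direct sum of two subrepresentations supported on $U_n^{(+\ell)} := \{x \in U_n \mid [x]_\ell = 1\}$ and $U_n^{(-\ell)} := \{x \in U_n \mid [x]_\ell = 0\}$, respectively.

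**Identifying the summands.**
The core of the argument is to match these two summands with $\rho_\mathbf{s} \circ \pi^{(\pm\ell)}$ using the index bijections from Notation \ref{weig::notation:pi_plus_minus}. First I would fix the unitary identifications $\C^{U_n^{(-\ell)}} \cong \C^{U_{n-1}}$ via $e_{y^{(-\ell)}} \mapsto e_y$ and $\C^{U_n^{(+\ell)}} \cong \C^{U_{n-1}}$ via $e_{(y\#0)^{(+\ell)}} \mapsto e_y$. Under the first identification, the restriction of $\rho_\mathbf{t}$ to the $(-\ell)$-block sends $p_x$ with $x = y^{(-\ell)} \in U_n$ to $E_{yy}$, matching $\rho_\mathbf{s}(\tilde p_y) = \rho_\mathbf{s}(\pi^{(-\ell)}(p_x))$; and on the $V_n$-side I would verify that the weight $c_\mathbf{t}(i^{(-\ell)} j^{(-\ell)})$ of an edge inside $Q_n^{(-\ell)}$ equals $c_\mathbf{s}(ij)$, because deleting the $\ell$-th (here $0$-valued) digit leaves the relevant partial parities $\mathrm{par}_k$ unchanged and relabels the flip-direction $k$ to $k-1$ exactly as $\mathbf{t}$ relabels $s_k$. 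This identifies the projection vectors $\psi_j$ with those of $\rho_\mathbf{s}$, giving the unitary equivalence of the $(-\ell)$-block with $\rho_\mathbf{s} \circ \pi^{(-\ell)}$. The $(+\ell)$-block is handled analogously, the only subtlety being the extra sign coming from the $1$ sitting in the $\ell$-th position; the shift $y \mapsto (y\#0)^{(+\ell)}$ is designed precisely to absorb this parity shift, and by Lemma \ref{hyp::lemma:properties_of_admissible_weightings}(2) a global phase adjustment $\lambda_x \in S^1$ is harmless even if the signs do not match on the nose.

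**Main obstacle.**
The routine parts are the graph-theoretic splitting and the abstract direct-sum statement, which follow immediately from the lemmas already proved. The delicate bookkeeping, and the step I expect to cost the most, is the verification that the weights $c_\mathbf{t}$ restrict correctly under the digit-insertion maps $y \mapsto y^{(\pm\ell)}$, namely that
\begin{align*}
  \mathrm{par}_k\!\left(y^{(-\ell)}\right) = \mathrm{par}_k(y) \text{ for } k<\ell, \qquad \mathrm{par}_{k+1}\!\left(y^{(-\ell)}\right) = \mathrm{par}_{k}(y) \text{ for } k\geq \ell,
\end{align*}
with the analogous identities (carrying an extra $+1$ for positions $\geq \ell$) for $y^{(+\ell)}$. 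Tracking how the flip-index $k$ of an edge is shifted by the insertion, and confirming that the signs $(-1)^{\mathrm{par}_k}$ transform into the signs of $c_\mathbf{s}$ up to a consistent global phase on each block, is where the only real care is required. Once these parity identities are checked, the unitary equivalence $\rho_\mathbf{t} \cong (\rho_\mathbf{s} \circ \pi^{(+\ell)}) \oplus (\rho_\mathbf{s} \circ \pi^{(-\ell)})$ follows by combining them with Lemma \ref{hyp::lemma:properties_of_admissible_weightings}(1) and (2).
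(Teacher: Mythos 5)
Your proposal is correct and follows essentially the same route as the paper: the paper conjugates $\left(\rho_\mathbf{s} \circ \pi^{(+\ell)}\right) \oplus \left(\rho_\mathbf{s} \circ \pi^{(-\ell)}\right)$ by the unitary built from exactly your index bijections $e_{(y\#0)^{(+\ell)}} \leftrightarrow e_y$, $e_{y^{(-\ell)}} \leftrightarrow e_y$, carries out the same parity bookkeeping you describe, and then absorbs the residual signs on the $[x]_\ell = 1$ block (present precisely for flip-indices $k < \ell$) via Lemma \ref{hyp::lemma:properties_of_admissible_weightings}(2) with the explicit gauge $\lambda_x = (-1)^{[x]_\ell \mathrm{par}_\ell(x)}$. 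The only step you leave implicit --- checking that this residual sign pattern really is of the form $\lambda_i \lambda_j$, which is not automatic for an arbitrary sign pattern --- is exactly what that explicit choice of $\lambda_x$ settles.
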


\begin{proof}
    The representation $\rho_\mathbf{t}$ acts on $\C^{U_n}$, while the direct sum on the right-hand side of the equation in the statement acts on $\C^{U_{n-1}} \oplus \C^{U_{n-1}}$. Consider the unitary operator $\mathcal{U}: \C^{U_{n-1}} \oplus \C^{U_{n-1}} \to \C^{U_n}$ given by
    \begin{align*}
        \mathcal{U}: (e_x, 0) \mapsto e_{(x \# 0)^{(+\ell)}}, \qquad
        (0, e_x) \mapsto e_{x^{(-\ell)}},
    \end{align*}
    for all $x \in U_{n-1}$. 
    Let us determine the representation
    \begin{align*}
        \sigma(\cdot) := \mathcal{U} \left(\left(\rho_\mathbf{s} \circ \pi^{(+\ell)}(\cdot)\right) \oplus \left(\rho_\mathbf{s} \circ \pi^{(-\ell)}(\cdot)\right) \right) \mathcal{U}^\ast
    \end{align*}
    of $C^\ast(Q_n)$ on $\C^{U_n}$. For $x \in U_n$ with $[x]_\ell = 1$ we have $x=(y \# 0)^{(+\ell)}$ for a suitable (and unique) $y \in U_{n-1}$. Then, $\pi^{(+\ell)}(p_x) = \tilde{p}_y$ and $\pi^{(-\ell)}(p_x) = 0$. Thus, one easily checks
    \begin{align*}
        \sigma(p_x) = \mathcal{U} \begin{pmatrix}
            \tilde{E}_{yy} & 0 \\
            0      & 0
        \end{pmatrix} \mathcal{U}^\ast = E_{xx},
    \end{align*}
    where we write a bounded operator on $\C^{U_{n-1}} \oplus \C^{U_{n-1}}$ as a $2 \times 2$ block matrix in a natural way, and $\tilde{E}_{yy} \in M_{U_{n-1}}$ is the usual standard matrix unit. One obtains 
    \begin{align*}
        \sigma(p_x) = E_{xx}
    \end{align*}
    for $x \in U_n$ with $[x]_\ell = 0$ similarly.

    Next, let $x \in V_n$ with $[x]_\ell = 1$, and let $y \in V_{n-1}$ be the unique vertex with $x = (y \# 0)^{(+\ell)}$. Then $\pi^{(+\ell)}(p_x) = \tilde{p}_y$ and $\pi^{(-\ell)}(p_x) = 0$. Further, $\tilde{p}_y$ is the projection onto the span of the vector $\tilde{\psi}_y \in \C^{U_{n-1}}$ with entries
    \begin{align*}
        [\tilde{\psi}_y]_i = \begin{cases}
            (-1)^{\mathrm{par}_k(i)} \sqrt{s_k}     &\exists \, k < n-1 : y = i \# k, \\
            0                                       &\text{ otherwise},
        \end{cases}
    \end{align*}
    where $i$ ranges over $U_{n-1}$.
    Consequently, $\sigma(p_x)$ is the projection onto the span of the vector $\varphi_x := \mathcal{U} (\tilde{\psi}_y, 0) \in \C^{U_n}$. Since, the second component of $(\tilde{\psi}_y, 0)$ vanishes it is clear that $[\varphi_x]_i$ vanishes for $i \in U_n$ with $[i]_\ell = 0$. For the other indices $i \in U_n$ there exists a unique $\tilde{i} \in U_{n-1}$ with $i = (\tilde{i} \# 0)^{(+\ell)}$, and one computes
    \begin{align*}
        [\varphi_x]_{i} &= \begin{cases}
            (-1)^{\mathrm{par}_k(\tilde{i})} \sqrt{s_k}     &\exists \, k < n-1 : y = \tilde{i} \# k, \\
            0                                       &\text{ otherwise.}
        \end{cases}
    \end{align*}
    One checks that $y = \tilde{i} \# k$ is equivalent to
    \begin{align*}
            x &= (y \# 0)^{(+\ell)} = (\tilde{i} \# 0)^{(+\ell)} \# k = i \# k, & \text{ if } k < \ell, \\
            x &= (y \# 0)^{(+\ell)} = (\tilde{i} \# 0)^{(+\ell)} \# (k+1) = i \# (k+1), & \text{ if } k \geq \ell.
    \end{align*}
    Further,
    \begin{align*}
        \mathrm{par}_k(\tilde{i}) = \begin{cases}
            - \mathrm{par}_k((\tilde{i} \# 0)^{(+\ell)}) = -\mathrm{par}_k(i)    &\text{ if } k < \ell, \\
            \mathrm{par}_{k+1}((\tilde{i} \# 0)^{(+\ell)}) = \mathrm{par}_{k+1}(i)   &\text{ if } k \geq \ell,
        \end{cases}
    \end{align*}
    Using $\mathbf{t} = [s_0, \dots, s_{\ell-1}, 0, s_\ell, \dots, s_{n-2}]$ we thus obtain
    \begin{align*}
        [\varphi_x]_{i} &= \begin{cases}
            -(-1)^{\mathrm{par}_k(i)} \sqrt{t_k}        &\exists \, k < \ell : x = i \# k, \\
            (-1)^{\mathrm{par}_{k+1}(i)} \sqrt{t_{k+1}}       &\exists \, k \geq \ell : x = i \# (k+1), \\
            0                                           &\text{ otherwise.}
        \end{cases}
    \end{align*}
    Analogously, one checks for $x \in V_n$ with $[x]_\ell = 0$ that $\sigma(p_x)$ is the projection onto the span of the vector $\varphi_x \in \C^{U_n}$ with
    \begin{align*}
        [\varphi_x]_i = \begin{cases}
            (-1)^{\mathrm{par}_k(i)} \sqrt{t_k}                &\exists \, k < \ell : x = i \# k, \\
            (-1)^{\mathrm{par}_{k+1}(i)} \sqrt{t_{k+1}}         &\exists \, k \geq \ell : x = i \# (k+1), \\
            0                                                   &\text{ otherwise.}  \\                                
        \end{cases}
    \end{align*}
    
    Putting everything together and using $t_\ell = 0$, we obtain that $\sigma$ is induced by the edge weighting $c: E_n \to \C$ with
    \begin{align*}
        c(ij) = \begin{cases}
            \varepsilon_{j}^k (-1)^{\mathrm{par}_k(i)} \sqrt{t_k}        &\exists \, k < n : j = i \# k, \\
            0                                           &\text{ otherwise },
        \end{cases}
    \end{align*}
    for all edges $ij \in E_n$, where
    \begin{align*}
        \varepsilon_j^k = \begin{cases}
            -1,    & \text{ if } [j]_\ell = 1 \text{ and } k < \ell, \\
            1,     & \text{ otherwise.}
        \end{cases}
    \end{align*}
    Setting $\lambda_x := (-1)^{[x]_\ell \mathrm{par}_\ell(x)}$ for all $x \in U_n \cup V_n$ it is not hard to verify
    \begin{align*}
        \lambda_i \lambda_j c(ij) = c_\mathbf{t}(ij),
    \end{align*}
    where $c_\mathbf{t}$ is the edge weighting from Definition \ref{weig::def:representation_rho_t}. Thus, $\sigma$ is unitarily equivalent to $\rho_\mathbf{t}$ by Lemma \ref{hyp::lemma:properties_of_admissible_weightings}(2).
\end{proof}

\section{Irreducible representations of hypercube C*-algebras}
\label{sec::irreducible_representations}

Building on the previous sections, we show that any irreducible representation of the hypercube algebra $C^\ast(Q_n)$ is unitarily equivalent to a subrepresentation of the representation $\rho_\mathbf{t}$ from Definition \ref{weig::def:representation_rho_t} for some point $\mathbf{t} = [t_0, \dots, t_{n-1}] \in \Delta_{n-1}$. We call a representation $\rho_1$ a subrepresentation of a representation $\rho$ if there is a third representation $\rho_2$ such that $\rho = \rho_1 \oplus \rho_2$ holds.

Let us briefly outline the proof strategy. We know from the previous sections that every irreducible representation $\rho$ is unitarily equivalent to a representation $\rho_c$ induced by an admissible edge weighting $c: E_n \to \C$. Further, since $\rho_c$ is irreducible, the induced subgraph $Q_n(c) \subset Q_n$ is a single sub-hypercube of $Q_n$. It will turn out that it suffices to prove $\rho_c \cong \rho_\mathbf{t}$ for suitable $\mathbf{t} \in \Delta_{n-1}$ in the case where $Q_n(c) = Q_n$. In this case the edge weighting $c$ is nowhere-vanishing, and we prove the statement in two steps:

\begin{itemize}
    \item First, in Lemma \ref{hyp::lemma:nonvanishing_admissible_weightings_are_almost_ct} we show that any nowhere-vanishing admissible weighting $c$ satisfies $|c(ij)| = \sqrt{t_k}$ for all $ij \in E_n$ with $i = j \# k$ for a suitable point $\mathbf{t} = [t_0, \dots, t_{n-1}] \in \Delta_{n-1}$.
    \item Then, in Lemma \ref{hyp::lemma:nonvanishing_admissible_weightings_are_unitarily_equivalent_to_ct} we prove that such a representation $\rho_c$ is unitarily equivalent to $\rho_\mathbf{t}$. For that we use Lemma \ref{hyp::lemma:properties_of_admissible_weightings}(2) which says that $\rho_c \cong \rho_\mathbf{t}$ holds if and only if there are $\lambda_x \in S^1$ for $x \in U_n(c) \cup V_n(c)$ such that
    \begin{align*}
        c(ij) = \lambda_i \lambda_j c_\mathbf{t}(ij) \qquad \text{ for all } ij \in E_n(c).
    \end{align*}
\end{itemize}

\begin{lemma}
    \label{hyp::lemma:nonvanishing_admissible_weightings_are_almost_ct}
    Assume that $c: E_n \to \C$ is a nowhere-vanishing admissible edge weighting of the hypercube $Q_n$, i.e. $c(ij) \neq 0$ for all edges $ij \in E_n$. 
    
    There is a point $\mathbf{t} = [t_0, \dots, t_{n-1}] \in \Delta_{n-1}$ such that $|c(ij)| = \sqrt{t_k}$ holds for all edges $ij \in E_n$ where $i = j \# k$.
\end{lemma}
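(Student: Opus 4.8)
The plan is to show that the modulus $|c(ij)|$ depends only on the direction $k$ of the edge, i.e. that $|c|$ is constant on each class $E_n^{(k)}$ from Notation \ref{pre::notation:subhypercube_Q_n^(k)_and_E_n^(k)}. Once this is established I may set $t_k := |c(ij)|^2$ for any, hence every, edge $ij \in E_n^{(k)}$; this is well defined and strictly positive because $c$ is nowhere-vanishing, and it gives $|c(ij)| = \sqrt{t_k}$ whenever $i = j \# k$. It then only remains to verify $\sum_{k=0}^{n-1} t_k = 1$ so that $\mathbf{t} = [t_0, \dots, t_{n-1}]$ indeed lies in $\Delta_{n-1}$.

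The heart of the argument is the interplay of two earlier results. First I would observe that in every square of $Q_n$ the two pairs of opposite edges lie in a common direction: if the four vertices differ in the two digits $k \neq \ell$, then traversing the square flips the digits in the order $k, \ell, k, \ell$, so the opposite edges $e, g$ in Figure \ref{reps::fig:square_with_edges_efgh} belong to one of $E_n^{(k)}, E_n^{(\ell)}$ and $f, h$ to the other. Since $c$ is nowhere-vanishing, all four edge weights are non-zero, so Lemma \ref{reps::lemma:admissible_weightings_and_squares}(2) applies and yields $|c(e)| = |c(g)|$ and $|c(f)| = |c(h)|$. In other words, two edges of $E_n^{(k)}$ that occur as the parallel pair of a common square carry equal modulus. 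Matching this against the generating relation of Lemma \ref{hyp::lemma:properties_of_hypercubes2}(2) — recall from its proof that $i_1 j_1 \sim_0 i_2 j_2$ holds precisely when $j_2 = i_1 \# \ell$ and $i_2 = j_1 \# \ell$ for some $\ell \neq k$, which is exactly the situation where $i_1 j_1$ and $i_2 j_2$ are the two direction-$k$ edges of the square $Q_n(i_1, i_2, j_1, j_2)$ — I conclude that $i_1 j_1 \sim_0 i_2 j_2$ forces $|c(i_1 j_1)| = |c(i_2 j_2)|$.

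Now ``having equal modulus under $c$'' is itself an equivalence relation on $E_n^{(k)}$ containing the generators $\sim_0$, hence it contains the full relation they generate. By Lemma \ref{hyp::lemma:properties_of_hypercubes2}(2) that generated relation has $E_n^{(k)}$ as a single class, so all edges in $E_n^{(k)}$ share one modulus, as required. Finally, to obtain $\sum_k t_k = 1$, I would fix a single vertex $j \in V_n$ and apply the diagonal case $j_1 = j_2 = j$ of the admissibility condition \eqref{wei::eq:admissible_weighting_condition1}: the neighbours of $j$ in $Q_n$ are exactly the $n$ vertices $i = j \# k$ for $k < n$, one per direction, so the condition reads $\sum_{i \in \mathcal{N}(j)} |c(ij)|^2 = \sum_{k=0}^{n-1} t_k = 1$.

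The step I expect to demand the most care is not any computation but the precise matching of the combinatorial relation $\sim_0$ with the geometric statement of Lemma \ref{reps::lemma:admissible_weightings_and_squares}(2): one must check that two same-direction edges sharing a square really are the \emph{opposite} pair $\{e, g\}$ or $\{f, h\}$ to which part (2) assigns equal modulus, and not an adjacent pair. Everything else reduces to bookkeeping with the two cited lemmas together with the nowhere-vanishing hypothesis, which is what lets the square lemma be invoked at every square.
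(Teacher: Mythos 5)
Your proof is correct and follows essentially the same route as the paper's: both combine Lemma \ref{reps::lemma:admissible_weightings_and_squares}(2) (opposite edges of a square have equal modulus) with Lemma \ref{hyp::lemma:properties_of_hypercubes2}(2) (each direction class $E_n^{(k)}$ is a single $\sim_0$-class) to get constant modulus per direction, then normalize via an admissibility condition at a single vertex. The only cosmetic differences are that the paper fixes $t_k := |c(0\,[0\#k])|^2$ at the base vertex $0$ and uses condition \eqref{wei::eq:admissible_weighting_condition2} there, whereas you establish constancy first and normalize with \eqref{wei::eq:admissible_weighting_condition1}; your explicit check that $\sim_0$-related edges are the \emph{opposite} pair of their square is a detail the paper leaves implicit.
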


\begin{proof}
    Since $c$ is nowhere-vanishing, we have $Q_n(c) = Q_n$. Let us set
    \begin{align*}
        t_k = |c(0 [0 \# k])|^2
    \end{align*}
    for all $k < n$. Note hat $\sum_{k=0}^{n-1} t_k = 1$ holds by condition \eqref{wei::eq:admissible_weighting_condition2} for admissible edge weightings, so that $\mathbf{t} = [t_0, \dots, t_{n-1}]$ is indeed in $\Delta_{n-1}$.
    
    It remains to show $|c(ij)| = \sqrt{t_k}$ for all edges $ij \in E_n$ with $i = j \# k$. Recall that we denote the set of all these edges by $E_n^{(k)}$. Now, the claim follows directly from Lemma \ref{reps::lemma:admissible_weightings_and_squares}(2) in combination with Lemma \ref{hyp::lemma:properties_of_hypercubes2}(2), since the subset of edges in $E_n^{(k)}$ with $|c(e)| = \sqrt{t_k}$ is non-empty (it contains $0 [0 \# k]$) and closed under taking opposite edges in a square, i.e. under the equivalence relation from Lemma \ref{hyp::lemma:properties_of_hypercubes2}(2).
\end{proof}

\begin{lemma}
    \label{hyp::lemma:nonvanishing_admissible_weightings_are_unitarily_equivalent_to_ct}
    Let $c: E_n \to \C$ be a nowhere-vanishing admissible weighting of the hypercube $Q_n$. Then there is a tuple $\mathbf{t} = [t_0, \dots, t_{n-1}] \in \Delta_{n-1}$ such that $\rho_c$ is unitarily equivalent to $\rho_{\mathbf{t}}$ where the latter is the representation induced by the weighting $c_\mathbf{t}$ from Definition \ref{weig::def:representation_rho_t}. 
\end{lemma}

\begin{proof}
    In view of Lemma \ref{hyp::lemma:properties_of_admissible_weightings}(2) the statement is equivalent to the existence of numbers $\lambda_x \in S^1$ for $x \in U_n \cup V_n$ with
    \begin{align}
        \label{hyp::eq:c_unitarily_equivalent_to_ct}
        c(ij) = \lambda_i \lambda_j c_\mathbf{t}(ij) 
    \end{align}
    for all edges $ij \in E_n$.

    First, recall Lemma \ref{hyp::lemma:nonvanishing_admissible_weightings_are_almost_ct} and choose $\mathbf{t} = [t_0, \dots, t_{n-1}] \in \Delta_{n-1}$ such that 
    $$
    |c(ij)| = \sqrt{t_k} \quad \text{ for all } ij \in E_n \text{ where } i = j \# k
    $$
    holds.
    We prove \eqref{hyp::eq:c_unitarily_equivalent_to_ct} for all edges $ij \in E_n^{(0)} \cup E_n^{(1)} \cup \dots \cup E_n^{(K)}$ by induction over $K$, where $E_n^{(k)}$ is the set of edges $ij \in E_n$ with $i = j \# k$ and $k < n$. For $K=0$ one simply chooses 
    \begin{align*}
        \lambda_i := (-1)^{\mathrm{par}_0(i)} \frac{c(ij)}{|c(ij)|} \qquad \text{ for all } i \in U_n \text{ and } j := i \# 0
    \end{align*}
    as well as $\lambda_j := 1$ for all $j \in V_n$. Then \eqref{hyp::eq:c_unitarily_equivalent_to_ct} holds for all edges $ij \in E_n^{(0)}$ since 
    \begin{align*}
        \lambda_i \lambda_j c_\mathbf{t}(ij) = (-1)^{\mathrm{par}_0(i)}  \frac{c(ij)}{|c(ij)|} (-1)^{\mathrm{par}_0(i)} \sqrt{t_0} = c_{ij}
    \end{align*}
    is true for all edges $ij \in E_n^{(0)}$.

    For the induction step, we may
    assume without loss of generality
    \begin{align*}
        c_\mathbf{t}(ij) = c(ij) \qquad \text{ for all } ij \in E_n^{(\ell)} \text{ for some } \ell < K.
    \end{align*}
    Let us choose
    \begin{align*}
        \lambda_i := (-1)^{\mathrm{par}_K(i)} \frac{c(ij)}{|c(ij)|} \qquad \text{ for all } i \in U_n \text{ with } [i]_K = 0 \text{ and } j := i \# K,
    \end{align*}
    as well as
    \begin{align*}
        \lambda_j := (-1)^{\mathrm{par}_K(i)} \frac{c(ij)}{|c(ij)|} \qquad \text{ for all } j \in V_n \text{ with } [j]_K = 0 \text{ and } i := j \# K.
    \end{align*}
    For all other $i \in  U_n$, $j \in V_n$ simply set $\lambda_i = \lambda_j = 1$. We first check \eqref{hyp::eq:c_unitarily_equivalent_to_ct} for all edges $ij \in E_n^{(\ell)}$ with $\ell < K$. 
    
    Indeed, if $[i]_K = 1$, then $[j]_K = 1$ holds as well and the equality is evident. Otherwise, we have both $[i]_K = 0$ and $[j]_K = 0$, where $i$ and $j$ differ in the $\ell$-th binary digit. Therefore, one obtains
    \begin{align}
        \label{hyp::eq:induction_step_for_weightings_consistency_with_previous_levels}
        \begin{aligned}
        \lambda_i \lambda_j c_\mathbf{t}(ij) 
            &= (-1)^{\mathrm{par}_K(i)} \frac{c(i[i \# K])}{|c(i[i \# K])|} (-1)^{\mathrm{par}_K(j \#K)} \frac{c([j \# K] j)}{|c([j \# K] j)|} c(ij),
        \end{aligned}
    \end{align}
    where we use the induction hypothesis to get $c_\mathbf{t}(ij) = c(ij)$. The vertices $i, j, i \# K, j \# K$ induce a square in $Q_n$ as sketched below.
    \begin{center}
        \begin{tikzpicture}
            \coordinate (A) at (0,0);
            \coordinate (B) at (2,0);
            \coordinate (C) at (2,2);
            \coordinate (D) at (0,2);

            \draw[thick] (A) -- (B) node[midway, below] {$g$};
            \draw[thick] (B) -- (C) node[midway, right] {$h$};
            \draw[thick] (C) -- (D) node[midway, above] {$e$};
            \draw[thick] (D) -- (A) node[midway, left] {$f$};

            \filldraw[fill=white, draw=black, thick] (A) circle (1.5pt) node[below left] {$i$};
            \filldraw[black] (B) circle (1.5pt) node[below right] {$j$};
            \filldraw[fill=white, draw=black, thick] (C) circle (1.5pt) node[above right] {$j\# K$};
            \filldraw[black] (D) circle (1.5pt) node[above left] {$i\# K$};
        \end{tikzpicture}
    \end{center}
    One immediately observes 
    \begin{align*}
       \mathrm{par}_K(i) = \mathrm{par}_K(j \# K) 
       \quad \text{ and } \quad
       \mathrm{par}_\ell(i) = \mathrm{par}_\ell(j \# K) + 1.
    \end{align*}
    From Lemma \ref{reps::lemma:admissible_weightings_and_squares}(1) and the induction hypothesis we get 
    \begin{align*}
        \frac{c(f)}{\overline{c(h)}} = - \frac{c(e)}{\overline{c(g)}} = - \frac{(-1)^{\mathrm{par}_\ell(j\# K)} \sqrt{t_\ell}}{(-1)^{\mathrm{par}_\ell(i)} \sqrt{t_\ell}} 
        = 1.
    \end{align*}
    Consequently, $c(i[i\# K]) = c(f) = \overline{c(h)} = \overline{c([j\# K] j)}$.
    Inserting this into \eqref{hyp::eq:induction_step_for_weightings_consistency_with_previous_levels} yields $\lambda_i \lambda_j c_\mathbf{t}(ij) = c(ij)$ as desired. 
    
    Thus, we have proved that the unitary transformation of the weighting $c$ given by the numbers $\lambda_i$ and $\lambda_j$ does not change the weighting of the edges $ij \in E_n^{(0)} \cup \dots \cup E_n^{(K-1)}$. To complete the induction step it only remains to show $c(ij) = \lambda_i \lambda_j c_\mathbf{t}(ij)$ for all $ij \in E_n^{(K)}$. For this, assume first $[i]_K = 0$. Then $[j]_K = 1$ and we have
    \begin{align*}
        \lambda_i \lambda_j c_\mathbf{t}(ij) = (-1)^{\mathrm{par}_K(i)} \frac{c(ij)}{|c(ij)|}  (-1)^{\mathrm{par}_K(i)} \sqrt{t_K} = c(ij).
    \end{align*}
    Otherwise, one has $[i]_K = 1$ and $[j]_K = 0$. It follows again
    \begin{align*}
        \lambda_i \lambda_j c_\mathbf{t}(ij) = (-1)^{\mathrm{par}_K(i)} \frac{c(ij)}{|c(ij)|}  (-1)^{\mathrm{par}_K(i)} \sqrt{t_K} = c(ij).
    \end{align*}
\end{proof}

\begin{thm}
    \label{irreps::thm:classification_of_irreps}
    Let $\rho: C^\ast(Q_n) \to B(\mathcal{H})$ be an irreducible representation of the hypercube algebra $C^\ast(Q_n)$ on a Hilbert space $\mathcal{H}$. Then there is a point $\mathbf{t} \in \Delta_{n-1}$ such that $\rho$ is unitarily equivalent to a subrepresentation of the representation $\rho_\mathbf{t}$ induced by the weighting $c_\mathbf{t}$ from Definition \ref{weig::def:representation_rho_t}.
\end{thm}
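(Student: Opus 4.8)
The plan is to argue by induction on $n$, reducing an irreducible representation supported on a proper sub-hypercube to an irreducible representation of $C^\ast(Q_{n-1})$ and then re-assembling it via the boundary decomposition in Lemma \ref{reps::lemma:decomposition_of_rho_t_for_boundary_points}.

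First I would set the stage. By Theorem \ref{hyp::thm:subhomogeneous:hypercube_algebra_is_subhomogeneous} the representation $\rho$ is rank-one, so by Lemma \ref{hyp::lemma:properties_of_admissible_weightings}(3) we may assume $\rho = \rho_c$ for an admissible weighting $c$, and by Lemma \ref{hyp::lemma:properties_of_admissible_weightings}(1) irreducibility forces $Q_n(c)$ to be a single sub-hypercube $Q^\prime \subseteq Q_n$. If $Q^\prime = Q_n$, then $c$ is nowhere-vanishing and Lemmas \ref{hyp::lemma:nonvanishing_admissible_weightings_are_almost_ct} and \ref{hyp::lemma:nonvanishing_admissible_weightings_are_unitarily_equivalent_to_ct} already give $\rho_c \cong \rho_\mathbf{t}$ for some $\mathbf{t} \in \Delta_{n-1}$, which is trivially a subrepresentation of itself; this also settles the base case $n = 1$, where every admissible weighting is nowhere-vanishing. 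So the remaining work is the inductive step when $Q^\prime \subsetneq Q_n$ is a proper sub-hypercube.

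In that case Lemma \ref{hyp::lemma:properties_of_hypercubes2}(1) produces an index $k < n$ with $Q^\prime \subseteq Q_n^{(-k)}$ or $Q^\prime \subseteq Q_n^{(+k)}$; I treat the first case, the second being symmetric with $\pi^{(+k)}$ in place of $\pi^{(-k)}$. Then $\rho_c(p_x) = 0$ for every $x$ with $[x]_k = 1$, and I would define a representation $\bar\rho$ of $C^\ast(Q_{n-1})$ on the same Hilbert space by $\bar\rho(\tilde p_y) := \rho_c(p_{y^{(-k)}})$. The one genuine check is that this respects the relations: relation \eqref{bga::eq:partition_relation} follows because $\sum_{x \in U_n,\, [x]_k = 0} \rho_c(p_x) = \rho_c(\sum_{x \in U_n} p_x) = 1$ once the vanishing terms are discarded (and likewise for $V_n$), while \eqref{bga::eq:orthogonality_relation} follows because $y \mapsto y^{(-k)}$ identifies $Q_{n-1}$ with the induced subgraph $Q_n^{(-k)}$ and hence carries non-edges to non-edges. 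By construction $\rho_c = \bar\rho \circ \pi^{(-k)}$, and since $\pi^{(-k)}$ is surjective and $\rho_c$ is irreducible, $\bar\rho$ is irreducible.

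Finally I would invoke the induction hypothesis on $\bar\rho$ to obtain $\mathbf{s} \in \Delta_{n-2}$ with $\bar\rho$ unitarily equivalent to a subrepresentation of $\rho_\mathbf{s}$, and set $\mathbf{t} := [s_0, \dots, s_{k-1}, 0, s_k, \dots, s_{n-2}] \in \Delta_{n-1}$. Composition with the fixed homomorphism $\pi^{(-k)}$ preserves both unitary equivalence and the relation of being a direct summand, so $\bar\rho \circ \pi^{(-k)}$ is a subrepresentation of $\rho_\mathbf{s} \circ \pi^{(-k)}$; and $\rho_\mathbf{s} \circ \pi^{(-k)}$ is itself a direct summand of $\rho_\mathbf{t}$ by Lemma \ref{reps::lemma:decomposition_of_rho_t_for_boundary_points}. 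Chaining these with $\rho \cong \rho_c = \bar\rho \circ \pi^{(-k)}$ shows that $\rho$ is unitarily equivalent to a subrepresentation of $\rho_\mathbf{t}$, completing the induction. I expect the main obstacle to be essentially bookkeeping, namely verifying the factorization $\rho_c = \bar\rho \circ \pi^{(-k)}$ and, in the symmetric case, correctly handling the bit-flip twist $(y \# 0)^{(+k)}$ built into $\pi^{(+k)}$, rather than anything conceptually deep, since the decomposition lemma performs the substantive work of re-embedding into a boundary representation $\rho_\mathbf{t}$.
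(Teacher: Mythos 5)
Your proof is correct and follows essentially the same route as the paper: induction on $n$, reduction via Lemma \ref{hyp::lemma:properties_of_admissible_weightings} to an induced representation $\rho_c$ supported on a single sub-hypercube, the nowhere-vanishing case handled by Lemma \ref{hyp::lemma:nonvanishing_admissible_weightings_are_unitarily_equivalent_to_ct}, and the proper-sub-hypercube case factored through $\pi^{(\pm k)}$ and reassembled with Lemma \ref{reps::lemma:decomposition_of_rho_t_for_boundary_points}. The only differences are cosmetic: you spell out the factored representation $\bar\rho$ and its relation checks explicitly (the paper simply asserts the factorization), and you fold the base case $n=1$ into the nowhere-vanishing case rather than treating $C^\ast(Q_1) \cong \C$ directly.
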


\begin{proof}
    We prove the statement by induction over $n$. For $n=1$ the hypercube $Q_1$ has two vertices and one edge, $C^\ast(Q_1) \cong \C$, $\Delta_0 = \{[1]\}$ and one readily checks that $\rho_{[1]}$ is the unique irreducible representation of $C^\ast(Q_1)$.

    For the induction step let $\rho: C^\ast(Q_n) \to B(\mathcal{H})$ be an irreducible representation. By Theorem \ref{hyp::thm:subhomogeneous:hypercube_algebra_is_subhomogeneous}, $\rho$ is rank-one, and in view of Lemma \ref{hyp::lemma:properties_of_admissible_weightings}(3) we may assume $\rho = \rho_c$ for some admissible weighting $c: E_n \to \C$ of the hypercube $Q_n$. Further, since $\rho$ is irreducible we know from the same lemma that $c$ is supported on a single sub-hypercube $Q_n(c)$ of $Q_n$.

    First assume $Q_n(c) = Q_n$. Then $c$ is nowhere-vanishing, and by Lemma \ref{hyp::lemma:nonvanishing_admissible_weightings_are_unitarily_equivalent_to_ct} there is a $\mathbf{t} \in \Delta_{n-1}$ such that $\rho_c$ is unitarily equivalent to $\rho_\mathbf{t}$.

    If $Q_n(c) \subsetneq Q_n$, then by Lemma \ref{hyp::lemma:properties_of_hypercubes2}(1) there is some $\ell < n$ such that $Q_n(c) \subset Q_n^{(+\ell)}$ or $Q_n(c) \subset Q_n^{(-\ell)}$, where $Q_n^{(\pm \ell)}$ is the sub-hypercube induced by all vertices $x \in U_n \cup V_n$ with $[x]_\ell = 0$ or $[x]_\ell = 1$ depending on the sign $\pm$. Let us assume $Q_n \subset Q_n^{(+\ell)}$. In this case, $\rho_c$ factors through the surjective map $\pi^{(+\ell)}: C^\ast(Q_n) \to C^\ast(Q_{n-1})$ from Notation \ref{weig::notation:pi_plus_minus} since $\rho_c$ vanishes on all $p_x$ with $[x]_\ell = 0$. Thus, there is an irreducible representation $\sigma: C^\ast(Q_{n-1}) \to B(\mathcal{H})$ with
    \begin{align*}
        \rho_c = \sigma \circ \pi^{(+\ell)}.
    \end{align*}
    By the induction hypothesis there is some $\mathbf{s} \in \Delta_{n-2}$ such that $\sigma$ is unitarily equivalent to a subrepresentation of $\rho_\mathbf{s}$. Consequently, $\rho_c$ is unitarily equivalent to a subrepresentation of $\rho_\mathbf{s} \circ \pi^{(+\ell)}$. However, this is again a subrepresentation of $\rho_\mathbf{t}$ for $\mathbf{t} = [s_0, \dots, s_{\ell-1}, 0, s_\ell, \dots, s_{n-2}] \in \Delta_{n-1}$ by Lemma \ref{reps::lemma:decomposition_of_rho_t_for_boundary_points}.
\end{proof}

\section{Explicit description of hypercube C*-algebras}
\label{sec::explicit_description_of_hypercube_algebras}

Finally, we are ready to describe the hypercube C*-algebras $C^\ast(Q_n)$ explicitly as algebra of continuous functions on the simplex $\Delta_{n-1}$. First, let us introduce a useful notation.

\begin{notation}
    For $n \in \N$ and $\mathbf{t} = [t_0, \dots, t_{n-1}] \in \Delta_{n-1}$ let 
    $$
    Q_n(\mathbf{t}) \subset Q_n
    $$ 
    be the subgraph obtained by removing all edges $ij \in E_n^{(k)}$ with $t_k = 0$, where $E_n^{(k)} = \{ ij \in E_n \mid i = j \# k \}$ as in Notation \ref{pre::notation:subhypercube_Q_n^(k)_and_E_n^(k)}. Evidently, $Q_n(\mathbf{t})$ partitions $Q_n$ into a disjoint union of sub-hypercubes, where $Q_n(\mathbf{t})$ consists of $2^\alpha$-sub-hypercubes of size $2^{n-1-\alpha}$ if $\mathbf{t}$ has $\alpha$ zeros (see the proof of Lemma \ref{hyp::lemma:properties_of_hypercubes2}(1)). 

    We say that a matrix $A \in M_{U_n}$ is in \emph{$\mathbf{t}$-block diagonal form} if it satisfies
    \begin{align*}
        A_{i_1i_2} = 0 \quad \text{whenever } i_1, i_2 \text{ are in distinct connected components of } Q_n(\mathbf{t}).
    \end{align*}
    In this case every block of $A$ corresponds to a connected component of $Q_n(\mathbf{t})$. 
\end{notation}

\begin{example}
    As an example consider the square $Q_2$ as sketched in Figure \ref{reps::ex:admissible_weighting_example}. For $\mathbf{t} = [1,0] = [t, \bar{t}] \in \Delta_1$ the graph $Q_2(\mathbf{t})$ consists of the two horizontal edges. Both connected components contain exactly one vertex from $U_2$ and thus a matrix $A \in M_{U_2} \cong M_2$ is in $\mathbf{t}$-block diagonal form if and only if it is diagonal. 
\end{example}

\begin{lemma}
    \label{expl::lemma:projections_are_block_diagonal}
    For every $\mathbf{t} = [t_0, \dots, t_{n-1}] \in \Delta_{n-1}$ and $x \in U_n \cup V_n$ the projection $\rho_\mathbf{t}(p_x) \in M_{U_n}$ is in $\mathbf{t}$-block diagonal form where $\rho_\mathbf{t}: C^\ast(Q_n) \to M_{U_n}$ is the representation from Definition \ref{weig::def:representation_rho_t}.
\end{lemma}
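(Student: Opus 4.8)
The plan is to read off directly from the inducing weighting $c_\mathbf{t}$ which matrix entries of $\rho_\mathbf{t}(p_x)$ can be non-zero, and then match the support of these entries against the connected components of $Q_n(\mathbf{t})$. I would split the argument into the two cases $x \in U_n$ and $x \in V_n$, the first being trivial and the second carrying the content.

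For $x = i \in U_n$ the claim is immediate. By Definition \ref{reps::def:representation_rho_c_induced_by_weighting} one has $\rho_\mathbf{t}(p_i) = E_{ii}$, whose only non-zero entry sits at the diagonal position $(i,i)$; since a vertex always lies in the same connected component of $Q_n(\mathbf{t})$ as itself, $\rho_\mathbf{t}(p_i)$ is trivially in $\mathbf{t}$-block diagonal form.

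The essential case is $x = j \in V_n$. Here Definition \ref{reps::def:representation_rho_c_induced_by_weighting} gives
\begin{align*}
  [\rho_\mathbf{t}(p_j)]_{i_1 i_2} = c_\mathbf{t}(i_1 j)\,\overline{c_\mathbf{t}(i_2 j)} \qquad \text{for all } i_1, i_2 \in U_n,
\end{align*}
so this entry is non-zero precisely when both factors $c_\mathbf{t}(i_1 j)$ and $c_\mathbf{t}(i_2 j)$ are non-zero. The key observation is a description of the support of $c_\mathbf{t}$: by Definition \ref{weig::def:representation_rho_t} one has $c_\mathbf{t}(ij) = (-1)^{\mathrm{par}_k(i)} \sqrt{t_k}$ for $j = i \# k$, so $c_\mathbf{t}(ij) \neq 0$ holds if and only if $t_k \neq 0$, that is, if and only if the edge $ij$ survives in $Q_n(\mathbf{t})$ (recall that $Q_n(\mathbf{t})$ is obtained by deleting exactly the edges in $E_n^{(k)}$ with $t_k = 0$).

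It then follows that a non-zero entry at position $(i_1, i_2)$ forces both $i_1 j$ and $i_2 j$ to be edges of $Q_n(\mathbf{t})$, so that $i_1 j i_2$ is a path in $Q_n(\mathbf{t})$ and hence $i_1$ and $i_2$ lie in a common connected component. Taking the contrapositive, whenever $i_1$ and $i_2$ belong to distinct connected components the entry $[\rho_\mathbf{t}(p_j)]_{i_1 i_2}$ vanishes, which is exactly the assertion that $\rho_\mathbf{t}(p_j)$ is in $\mathbf{t}$-block diagonal form. I do not anticipate a genuine obstacle here: the argument is a direct unwinding of the definitions, and the only point requiring care is the precise correspondence between the vanishing of $c_\mathbf{t}$ on an edge and the removal of that edge in $Q_n(\mathbf{t})$.
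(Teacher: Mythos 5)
Your proposal is correct and follows essentially the same route as the paper: treat $x \in U_n$ trivially via $\rho_\mathbf{t}(p_i) = E_{ii}$, and for $x = j \in V_n$ observe that a non-vanishing entry $c_\mathbf{t}(i_1 j)\,\overline{c_\mathbf{t}(i_2 j)}$ forces both $i_1 j$ and $i_2 j$ to be edges of $Q_n(\mathbf{t})$, hence $i_1, i_2$ lie in the same connected component. The paper phrases this as a case distinction (whether or not $j$ is a common neighbor of $i_1$ and $i_2$ in $Q_n$) rather than your contrapositive, but the content is identical.
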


\begin{proof}
    For $i \in U_n$ we have $\rho_\mathbf{t}(p_i) = E_{ii}$, which is clearly in $\mathbf{t}$-block diagonal form. For $j \in V_n$ recall that $\rho(p_j)$ is the projection onto the span of the vector $\psi_j \in \C^{U_n}$ given by
    \begin{align*}
        [\psi_j]_i = \begin{cases}
            (-1)^{\mathrm{par}_k(i)} \sqrt{t_k}, & \text{ if } j = i \# k \text{ for some } k < n, \\
            0, & \text{ otherwise}.
        \end{cases}
    \end{align*}
    If $i, i^\prime \in U_n$ are in different connected components of $Q_n(\mathbf{t})$, then there are two possibilities: If $j$ is not a common neighbor of $i$ and $i^\prime$ in $Q_n$, then $[\psi_j]_i = 0$ or $[\psi_j]_{i^\prime} = 0$. In any event it follows that the $(i, i^\prime)$-th entry of $\rho_\mathbf{t}(p_j)$ vanishes. If, on the other hand, $j$ is a common neighbor of $i$ and $i^\prime$, then there are $\ell, k < n$ with $j = i \# \ell$ and $j = i^\prime \# k$. Since $i$ and $i^\prime$ are in different connected components of $Q_n(\mathbf{t})$, we must have $t_k = 0$ or $t_\ell = 0$. Again this entails that the $(i, i^\prime)$-th entry of $\rho_\mathbf{t}(p_j)$ vanishes.
    Thus, $\rho_\mathbf{t}(p_j)$ is in $\mathbf{t}$-block diagonal form.
\end{proof}

\begin{thm}
    \label{hyp::thm:hypercube_cstar_algebras_as_continuous_functions}
    The C*-algebra $C^\ast(Q_n)$ is isomorphic to the algebra of continuous functions $f: \Delta_{n-1} \to M_{U_n} \cong M_{2^{n-1}}$ such that $f(\mathbf{t})$ is in $\mathbf{t}$-block diagonal form for all $\mathbf{t} \in \Delta_{n-1}$.
\end{thm}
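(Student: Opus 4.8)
The plan is to build the isomorphism from the representations $\rho_\mathbf{t}$ of Definition~\ref{weig::def:representation_rho_t}, letting $\mathbf{t}$ range over the simplex. Concretely, I would first define a candidate map
\begin{align*}
  \Phi: C^\ast(Q_n) \to C(\Delta_{n-1}, M_{U_n}), \qquad \Phi(a)(\mathbf{t}) = \rho_\mathbf{t}(a).
\end{align*}
To see that this is a well-defined $\ast$-homomorphism I would check that the generator functions $\mathbf{t} \mapsto \rho_\mathbf{t}(p_x)$ are continuous --- their matrix entries are, up to a fixed sign, products $\sqrt{t_{k_1} t_{k_2}}$ of square roots of the barycentric coordinates, hence continuous on $\Delta_{n-1}$ --- and that they satisfy the defining relations \eqref{bga::eq:partition_relation} and \eqref{bga::eq:orthogonality_relation} pointwise, which they do since each $\rho_\mathbf{t}$ is a representation. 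The universal property of $C^\ast(Q_n)$ then produces $\Phi$. By Lemma~\ref{expl::lemma:projections_are_block_diagonal} the images $\Phi(p_x)(\mathbf{t})$ lie in the $\mathbf{t}$-block diagonal subalgebra $D_\mathbf{t} \subset M_{U_n}$; since $D_\mathbf{t}$ is a C*-subalgebra this persists for all $a$, so $\Phi$ takes values in $A_n$.

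For injectivity I would use the classification of irreducible representations. If $\Phi(a) = 0$ then $\rho_\mathbf{t}(a) = 0$ for every $\mathbf{t} \in \Delta_{n-1}$. By Theorem~\ref{irreps::thm:classification_of_irreps} every irreducible representation of $C^\ast(Q_n)$ is unitarily equivalent to a subrepresentation of some $\rho_\mathbf{t}$, so it also annihilates $a$. As the irreducible representations are jointly faithful, $a = 0$, and $\Phi$ is injective. In particular $B := \Phi(C^\ast(Q_n))$ is a closed, unital C*-subalgebra of $A_n$.

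The surjectivity $B = A_n$ is the main point, and I would treat $A_n$ as a $C(\Delta_{n-1})$-algebra via the central scalar functions $\mathbf{t} \mapsto g(\mathbf{t}) \cdot 1$. Two ingredients are needed. First, $B$ should contain all these scalar functions: computing $\rho_\mathbf{t}(p_i\, p_{i \# k}\, p_i) = t_k E_{ii}$ from the formula for $\psi_{i \# k}$ and summing over $i \in U_n$ gives
\begin{align*}
  \Phi\Bigl(\sum_{i \in U_n} p_i\, p_{i \# k}\, p_i\Bigr)(\mathbf{t}) = t_k \cdot 1,
\end{align*}
so the barycentric coordinate functions lie in $B$; together with the unit they separate the points of $\Delta_{n-1}$ and hence generate $C(\Delta_{n-1}) \cdot 1 \subseteq B$ by the commutative Stone--Weierstrass theorem. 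Second, $\Phi$ should be fiberwise onto, i.e.\ $\mathrm{ev}_\mathbf{t}(B) = \rho_\mathbf{t}(C^\ast(Q_n)) = D_\mathbf{t}$ for each $\mathbf{t}$. At an interior point $\rho_\mathbf{t}$ is irreducible onto all of $M_{U_n} = D_\mathbf{t}$; at a boundary point Lemma~\ref{hyp::lemma:properties_of_admissible_weightings}(1) writes $\rho_\mathbf{t}$ as a direct sum of irreducible representations indexed by the connected components of $Q_n(\mathbf{t})$, and these are pairwise inequivalent because they vanish on different sets of generators $p_x$, so their direct sum is onto the full block-diagonal algebra $D_\mathbf{t}$.

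With these two facts in hand the conclusion follows from a standard partition-of-unity gluing: given $f \in A_n$ and $\varepsilon > 0$, fiberwise surjectivity and continuity provide, for each $\mathbf{t}_0$, an element $a_{\mathbf{t}_0}$ with $\|\Phi(a_{\mathbf{t}_0}) - f\| < \varepsilon$ on a neighborhood of $\mathbf{t}_0$; covering the compact simplex by finitely many such neighborhoods and patching with a scalar partition of unity $\{g_r\} \subset C(\Delta_{n-1}) \cdot 1 \subseteq B$ yields an element of $B$ within $\varepsilon$ of $f$. Since $B$ is closed, $f \in B$ and thus $B = A_n$. I expect the delicate step to be verifying the fiberwise surjectivity at the boundary --- specifically the inequivalence of the component subrepresentations, which is what prevents the image of $\rho_\mathbf{t}$ from collapsing into a proper diagonal subalgebra of $D_\mathbf{t}$ --- while the continuity, the scalar-function computation, and the gluing are routine.
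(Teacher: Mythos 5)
Your proof is correct, and its skeleton --- the homomorphism $\Phi$ obtained from the universal property and the continuity of $\mathbf{t} \mapsto \rho_\mathbf{t}(p_x)$, injectivity via Theorem \ref{irreps::thm:classification_of_irreps}, and the identification of each fiber of the image with the full $\mathbf{t}$-block diagonal algebra --- matches the paper's proof; but you diverge genuinely in two places. First, for fiberwise surjectivity the paper argues by hand: for $i, i'$ in the same connected component of $Q_n(\mathbf{t})$ the product of the projections $p_x^{\mathbf{t}}$ along a connecting path is a nonzero multiple of the matrix unit $E_{ii'}$, so the fiber image contains all matrix units of every block. You instead invoke the decomposition of $\rho_\mathbf{t}$ into irreducible summands from Lemma \ref{hyp::lemma:properties_of_admissible_weightings}(1) and note that these summands are pairwise inequivalent because they vanish on different generators; this is exactly the point that needs care (without inequivalence the image could sit ``diagonally'' inside $D_\mathbf{t}$), and your argument for it is valid. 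Second, for the passage from fiberwise to global surjectivity the paper simply cites the Stone--Weierstrass theorem for C*-algebras \cite[Theorem 11.1.8]{dixmier_c-algebras_1977}, whereas you avoid that machinery: your computation $\Phi\bigl(\sum_{i \in U_n} p_i\, p_{i \# k}\, p_i\bigr)(\mathbf{t}) = t_k \cdot 1$ is correct and shows that the barycentric coordinates, hence (by commutative Stone--Weierstrass and closedness of the image) all of $C(\Delta_{n-1}) \cdot 1$, lie in $B$, after which the partition-of-unity gluing over the compact simplex is the standard argument that a $C(X)$-subalgebra containing the scalars and having full fibers is everything. Your route is longer but self-contained and makes the $C(\Delta_{n-1})$-algebra structure explicit; the paper's route is shorter but leaves the separation hypothesis of the noncommutative Stone--Weierstrass theorem implicit --- your scalar-function computation is in fact one natural way to see that pure states over distinct base points are separated, so the two proofs complement each other.
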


\begin{proof}
    For every $\mathbf{t} \in \Delta_{n-1}$ and $x \in U_n \cup V_n$ let $p_x^{\mathbf{t}} := \rho_\mathbf{t}(p_x) \in M_{U_n}$. One readily checks that the map $\Delta_{n-1} \to M_{U_n}$ given by $\mathbf{t} \mapsto p_x^{\mathbf{t}}$ is continuous for all $x \in U_n \cup V_n$. Thus, the universal property of $C^\ast(Q_n)$ yields a $\ast$-homomorphism
    \begin{align*}
        \varphi: C^\ast(Q_n) \to C(\Delta_{n-1}, M_{U_n}), \quad
        p_x \mapsto (p_x^{\mathbf{t}})_{\mathbf{t} \in \Delta_{n-1}}.
    \end{align*}
    It remains to show that $\varphi$ is injective and that its image consists of all continuous functions $f: \Delta_{n-1} \to M_{U_n}$ such that $f(\mathbf{t})$ is in $\mathbf{t}$-block diagonal form for all $\mathbf{t} \in \Delta_{n-1}$.

    To show injectivity, it suffices to note that every irreducible representation $\rho$ of $C^\ast(Q_n)$ factors through $\varphi$. Indeed, by Theorem \ref{irreps::thm:classification_of_irreps} every irreducible representation is unitarily equivalent to a subrepresentation of some $\rho_\mathbf{t}$ for $\mathbf{t} \in \Delta_{n-1}$.

    For surjectivity, first observe that for every $\mathbf{t} \in \Delta_{n-1}$ and $x \in U_n \cup V_n$ the projection $p_x^{\mathbf{t}}$ is in $\mathbf{t}$-block diagonal form by Lemma \ref{expl::lemma:projections_are_block_diagonal}.
    On the other hand, whenever $i, i^\prime \in U_n$ are in the same connected component of $Q_n(\mathbf{t})$, then there is a path $i= i_1j_1i_2j_2 \dots j_{m-1} i_m = i^\prime$ in $Q_n(\mathbf{t})$ that connects $i$ and $i^\prime$. Then one has
    \begin{align*}
        p_{i_1}^{\mathbf{t}} p_{j_1}^{\mathbf{t}} p_{i_2}^{\mathbf{t}} p_{j_2}^{\mathbf{t}} \dots p_{j_{m-1}}^{\mathbf{t}} p_{i_m}^{\mathbf{t}} \neq 0,
    \end{align*}
    and the matrix on the left-hand side has a single non-zero entry at the $(i,i^\prime)$-th position. Thus, 
    \begin{align*}
        \varphi(\Delta_{n-1})|_\mathbf{t} := \{ \varphi(x)|_\mathbf{t} \mid x \in C^\ast(Q_n) \} \subset M_{U_n}
    \end{align*}
    is the algebra of matrices in $\mathbf{t}$-block diagonal form. Now, the Stone-Weierstrass theorem for C*-algebras implies that $\varphi$ is surjective, see e.g. \cite[Theorem 11.1.8]{dixmier_c-algebras_1977}.
\end{proof}

\section{Application to magic isometries}
\label{sec::application_to_magic_isometries}

In this final section we apply our findings about the $C^\ast$-algebra of the cube $Q_3$ in order to solve a problem about magic unitaries. We discussed this problem and its background from \cite[Section 5]{banica_noncommutative_2012} in the introduction. Recall the notions of an $n \times m$-magic isometry and an $n \times n$-magic unitary from Definition \ref{intro::def:magic_isometry_unitary}. 

The general question is whether a given $n \times m$-magic isometry can be completed to an $n \times n$-magic unitary by adding further projections. In this section we investigate the particular case of a $2 \times 4$-magic isometry and its completion to a $4 \times 4$-magic unitary. Thus, the problem we solve is the following.

\begin{problem}
    \label{app::problem:2x4_quantum_sudoku_fill_up}
    Can every $2 \times 4$-magic isometry $P = (P_{ij})_{i=1,2; j=1,2,3,4} \subset M_{2,4}(A)$ with entries in a $C^\ast$-algebra $A = C^\ast(P_{ij})$ be filled up to a $4 \times 4$-magic unitary $U = (U_{ij})_{i,j=1}^4 \subset M_4(B)$ with entries in another $C^\ast$-algebra $B$ in the sense that there is an embedding $A \hookrightarrow B$ with $P_{ij} \mapsto U_{ij}$ for $i=1,2$ and $j=1,2,3,4$?
\end{problem}

The following is a simple observation which connects $2 \times 4$-magic isometries to the hypercube $C^\ast$-algebra $C^\ast(Q_3)$.

\begin{proposition}
    \label{app::prop:2x4_magic_isometry_as_hypercube_algebra}
    The universal $C^\ast$-algebra generated by the entries $p_{ij}$ with $i=1,2$ and $j=1,2,3,4$ of a $2 \times 4$-magic isometry is isomorphic to the hypercube C*-algebra $C^\ast(Q_3)$ via
    \begin{align*}
        p_{11} \mapsto p_{000}, & & p_{21} \mapsto p_{111}, \\
        p_{12} \mapsto p_{011}, & & p_{22} \mapsto p_{100}, \\
        p_{13} \mapsto p_{110}, & & p_{23} \mapsto p_{001}, \\
        p_{14} \mapsto p_{101}, & & p_{24} \mapsto p_{010},
    \end{align*}
    where the $p_x$ on the right-hand side are the canonical generators of $C^\ast(Q_3)$ as in Example \ref{hyp::example:representation_rho_t_for_the_cube_Q3}.
\end{proposition}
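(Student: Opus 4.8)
The plan is to realise both $C^\ast$-algebras as universal objects presented by the same generators and relations, so that the stated bijection of generators extends to a pair of mutually inverse $\ast$-homomorphisms. Denote by $A$ the universal unital $C^\ast$-algebra generated by projections $p_{ij}$ (with $i \in \{1,2\}$ and $j \in \{1,2,3,4\}$) subject to the magic-isometry relations of Definition \ref{intro::def:magic_isometry_unitary}, that is, $\sum_{j} p_{1j} = 1 = \sum_{j} p_{2j}$ together with $p_{1j} p_{2j} = 0$ for each $j$. First I would record the elementary fact that a finite family of projections summing to the unit is automatically pairwise orthogonal. Applied to each row of $P$ this shows that the only column relations not already forced are exactly the four relations $p_{1j} p_{2j} = 0$; applied to $U_3$ and $V_3$ it shows the analogous automatic orthogonalities inside $C^\ast(Q_3)$. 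Thus on both sides the ``same-row'' and ``same-part'' orthogonalities are free consequences of the partition-of-unity relations and need not be matched separately.

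The heart of the argument is the combinatorial matching of relations. Under the proposed assignment the first row $(p_{1j})_j$ is sent bijectively onto the even-parity vertices $U_3 = \{000, 011, 110, 101\}$ and the second row $(p_{2j})_j$ onto the odd-parity vertices $V_3 = \{111, 100, 001, 010\}$; hence the two partition-of-unity relations of $A$ correspond precisely to the two relations \eqref{bga::eq:partition_relation} of $C^\ast(Q_3)$. It then remains to match the column orthogonalities with \eqref{bga::eq:orthogonality_relation}. Here I would use that in $Q_3$ every vertex of $U_3$ has exactly one non-neighbour in $V_3$, namely its antipode differing in all three binary digits: neighbours lie at Hamming distance $1$, and a vertex of opposite parity lies at distance $1$ or $3$, so each $u \in U_3$ misses precisely the unique vertex at distance $3$. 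A direct inspection of the four columns confirms that the pairs $(p_{1j}, p_{2j})$ correspond exactly to the four antipodal pairs $(000,111)$, $(011,100)$, $(110,001)$, $(101,010)$, which are the only non-edges between $U_3$ and $V_3$. Consequently the four column relations $p_{1j} p_{2j} = 0$ correspond precisely to the orthogonality relation \eqref{bga::eq:orthogonality_relation}.

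With this dictionary established the remainder is a routine application of universal properties. The images of the generators $p_{ij}$ in $C^\ast(Q_3)$ are projections satisfying the defining relations of $A$, so the universal property of $A$ yields a $\ast$-homomorphism $\Phi \colon A \to C^\ast(Q_3)$; conversely, the elements of $A$ indexed via the inverse of the displayed bijection are projections satisfying \eqref{bga::eq:partition_relation} and \eqref{bga::eq:orthogonality_relation}, so the universal property of $C^\ast(Q_3)$ yields $\Psi \colon C^\ast(Q_3) \to A$. Since $\Phi$ and $\Psi$ invert each other on generators, and the generators generate, they are mutually inverse $\ast$-isomorphisms, which proves the claim. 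The one point I expect to require genuine care is the \emph{exhaustiveness} of the matching in the second paragraph, namely that the set of column relations and the set of non-edge relations of $Q_3$ coincide and that no relation is overlooked on either side; this rests entirely on the antipode count $|U_3| \cdot 3 = 12$ edges out of $16$ possible cross pairs, leaving exactly the four antipodal non-edges.
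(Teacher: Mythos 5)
Your proposal is correct: the paper states this proposition without proof as a ``simple observation,'' and your argument—matching the two presentations generator-by-generator, checking that the row/partition relations and the column/non-edge relations correspond exactly (with within-row and within-part orthogonalities being automatic consequences of projections summing to the unit), and then invoking the universal properties in both directions—is precisely the routine verification the paper has in mind. The key combinatorial point, that the four column pairs map exactly onto the four antipodal non-edges of $Q_3$, is verified correctly.
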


In what follows we will identify the two C*-algebras from the previous proposition and denote the canonical generators of $C^\ast(Q_3)$ by $p_{ij}$ with $i=1,2$ and $j=1,2,3,4$.

Recall that the universal C*-algebra generated by the entries of a $4 \times 4$-magic unitary $u = (u_{ij})_{i,j=1}^4$ is known as $C(S_4^+)$, the underlying C*-algebra of the quantum permutation group on four points introduced by Wang in \cite{wang_quantum_1998}. This C*-algebra is well-understood, see e.g. \cite{banica_structure_2006,banica_flat_2017,banica_integration_2007,banica_representations_2009,jung_models_2020,katsura_magic_2022}. 

As suggested in \cite{banica_noncommutative_2012}, we will make use of the following faithful representation of $C(S_4^+)$ which is due to Banica and Collins, see also \cite{banica_structure_2006}.

\begin{thm}[{\cite[Theorem 4.1]{banica_integration_2008}}]
    Let 
    \begin{align*}
        c_1 = \begin{pmatrix}
            1 & 0 \\
            0 & 1
        \end{pmatrix}, \;
        c_2 = \begin{pmatrix}
            0 & 1 \\
            1 & 0
        \end{pmatrix}, \;
        c_3 = \begin{pmatrix}
            0 & -i \\
            i & 0
        \end{pmatrix}, \;
        c_4 = \begin{pmatrix}
            1 & 0 \\
            0 & -1
        \end{pmatrix}
    \end{align*}
    be the Pauli matrices, and for every matrix $X \in M_2$ let $\Pi(X) \in M_4$ be the projection onto $\mathrm{span}(X) \subset M_2$ after identifying the Hilbert spaces $M_2$ and $\C^4$. 
    
    The $\ast$-homomorphism $\Phi: C(S_4^+) \to C(\mathrm{SU}(2), M_4)$ given by
    \begin{align*}
        \Phi: u_{ij} \mapsto (\Pi(c_i x c_j))_{x \in \mathrm{SU}(2)} =: (\sigma_x(u_{ij}))_{x \in \mathrm{SU}(2)} \quad \text{ for } i,j \leq 4,
    \end{align*}
    defines a faithful representation of $C(S_4^+)$.
\end{thm}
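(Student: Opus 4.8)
This is the Banica--Collins model for $C(S_4^+)$; since it is quoted from \cite{banica_integration_2008} I only sketch how one would prove it. Throughout, equip $M_2$ with the Hilbert--Schmidt inner product $\langle A, B\rangle = \mathrm{Tr}(A^\ast B)$, under which the Pauli matrices $c_1, c_2, c_3, c_4$ form an orthogonal basis of $M_2$. The proof splits into an easy well-definedness part and the substantial faithfulness part.

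\emph{Well-definedness.} Since $C(S_4^+)$ is universal for the entries of a $4\times 4$-magic unitary, it suffices to check that for each fixed $x \in \mathrm{SU}(2)$ the matrix $(\Pi(c_i x c_j))_{i,j \leq 4} \in M_4(M_4)$ is a magic unitary. Each $\Pi(c_i x c_j)$ is a rank-one projection onto the line $\C\, c_i x c_j \subset M_2 \cong \C^4$; note $c_i x c_j \neq 0$ as a product of invertibles. Because $A \mapsto c_i A$ and $A \mapsto A c_j$ are unitaries of $(M_2, \langle\cdot,\cdot\rangle)$ (the $c_i$ and $x$ being unitary), for a fixed row index $i$ the family $\{c_i x c_j\}_{j\leq 4}$ is again an orthogonal basis of $M_2$, so the $\Pi(c_i x c_j)$ are pairwise orthogonal projections summing to $1_{M_4}$; the same holds along each column. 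Continuity in $x$ is clear, so we obtain a continuous field of magic unitaries and the universal property yields the unital $\ast$-homomorphism $\Phi$.

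\emph{Faithfulness.} This is the crux. Let $h$ be the Haar state of $C(S_4^+)$, and let $\omega$ be the state on $C(\mathrm{SU}(2), M_4)$ given by $\omega(a) = \int_{\mathrm{SU}(2)} \mathrm{tr}(a(x))\, dx$, where $dx$ is normalized Haar measure and $\mathrm{tr}$ the normalized trace on $M_4$. The plan is to establish the identity $h = \omega \circ \Phi$ and then deduce injectivity. For the identity I would show that $\phi := \omega\circ\Phi$ is invariant under the comultiplication $\Delta(u_{ij}) = \sum_k u_{ik}\otimes u_{kj}$, i.e. $(\phi\otimes\mathrm{id})\Delta = \phi(\cdot)\,1 = (\mathrm{id}\otimes\phi)\Delta$; by uniqueness of the Haar state this forces $\phi = h$. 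Equivalently, and more concretely, one compares the joint moments
\[
\omega\big(\Phi(u_{i_1 j_1}) \cdots \Phi(u_{i_k j_k})\big) = \int_{\mathrm{SU}(2)} \mathrm{tr}\big(\Pi(c_{i_1} x c_{j_1}) \cdots \Pi(c_{i_k} x c_{j_k})\big)\, dx
\]
with the values of $h(u_{i_1 j_1}\cdots u_{i_k j_k})$ supplied by the Weingarten formula for $S_4^+$; their agreement determines $h = \omega\circ\Phi$ on the dense $\ast$-subalgebra generated by the $u_{ij}$, hence everywhere. Granting this, faithfulness is formal: $S_4^+$ is coamenable, so $h$ is faithful on $C(S_4^+)$, while $\omega$ is faithful on $C(\mathrm{SU}(2), M_4)$ (being built from the faithful trace on $M_4$ and the faithful Haar measure on $\mathrm{SU}(2)$); thus $\Phi(a)=0$ gives $h(a^\ast a)=\omega(\Phi(a^\ast a))=0$ and therefore $a=0$.

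\emph{Main obstacle.} The entire difficulty lies in the identity $h = \omega\circ\Phi$. Its conceptual source is the exceptional isomorphism $S_4^+ \cong \mathrm{SO}(3)_{-1}$ together with the double cover $\mathrm{SU}(2) \to \mathrm{SO}(3)$: the adjoint action $A \mapsto x A x^\ast$ fixes $c_1 = 1$ and rotates $\mathrm{span}(c_2, c_3, c_4)$, and the Pauli model encodes exactly this twisted rotation. Carrying out the moment comparison above --- equivalently, verifying that the model corepresentation has the same intertwiner spaces as the fundamental corepresentation of $S_4^+$ --- is the technical heart of the argument, and is where one must actually exploit the structure of $\mathrm{SU}(2)$.
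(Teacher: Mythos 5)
The paper does not prove this statement at all: it is imported verbatim, with citation, as Theorem 4.1 of Banica--Collins, and the paper only \emph{uses} it (in the lemmas of the final section). So there is no internal proof to compare against; I can only assess your sketch against the original argument, which it in fact closely follows.

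The parts you argue in full are correct. For well-definedness, the Hilbert--Schmidt orthogonality of $\{c_i x c_j\}_{j \leq 4}$ for fixed $i$ (and of $\{c_i x c_j\}_{i \leq 4}$ for fixed $j$), coming from the fact that left and right multiplication by unitaries are unitaries of $(M_2, \langle \cdot, \cdot \rangle)$, does show that each $(\Pi(c_i x c_j))_{i,j}$ is a magic unitary, and the universal property then yields $\Phi$. The faithfulness reduction is also sound: $\omega$ is faithful on $C(\mathrm{SU}(2), M_4)$, the Haar state $h$ is faithful on the \emph{full} C*-algebra $C(S_4^+)$ because $S_4^+$ is coamenable, and granting $h = \omega \circ \Phi$ this forces $\Phi$ to be injective. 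Two caveats. First, coamenability of $S_4^+$ must be quoted as an independent prior result (it follows from the $\mathrm{SO}(3)$-type fusion rules and polynomial growth of the dual; note $S_n^+$ is coamenable only for $n \leq 4$) --- as stated it could look circular, since one common route to coamenability is via this very model. Second, the identity $h = \omega \circ \Phi$, i.e.\ the invariance of $\omega \circ \Phi$ under the coproduct (equivalently, the Weingarten moment comparison), is exactly the substance of the Banica--Collins theorem, and you only name it rather than prove it; this is where the conjugation action of $\mathrm{SU}(2)$ on $\mathrm{span}(c_2, c_3, c_4)$ and the identification $S_4^+ \cong \mathrm{SO}_{-1}(3)$ actually get used. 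For a statement the paper itself only cites, leaving that step as a pointer to the literature is reasonable, but one should be clear that your proposal is a correct proof outline rather than a self-contained proof.
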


We can reformulate Problem \ref{app::problem:2x4_quantum_sudoku_fill_up} in terms of universal C*-algebras as follows.

\begin{lemma}
    \label{app::lemma:reformulation_of_quantum_sudoku_fill_up_problem}
    Problem \ref{app::problem:2x4_quantum_sudoku_fill_up} has an affirmative answer if the following condition ($\ast$) holds: Every irreducible representation of $C^\ast(Q_3)$ factors through the unique $\ast$-homomorphism $\varphi: C^\ast(Q_3) \to C(S_4^+)$ with
    \begin{align*}
        \label{hyp::eq:phi_map_from_C*(Q3)_to_C(S4+)}
        \varphi: p_{ij} \mapsto u_{ij} \quad \text{ for all } i \leq 2, j \leq 4,
    \end{align*}
    where we identify the generators of $C^\ast(Q_3)$ with the $p_{ij}$ as in Proposition \ref{app::prop:2x4_magic_isometry_as_hypercube_algebra}. 
    
    This means that whenever $\rho: C^\ast(Q_3) \to B(\mathcal{H})$ is an irreducible representation, then there exists a  representation $\sigma: C(S_4^+) \to B(\mathcal{H})$ such that
    \begin{align*}
        \rho = \sigma \circ \varphi.
    \end{align*}
\end{lemma}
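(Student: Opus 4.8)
The plan is to exploit the universal property of $C^\ast(Q_3)$ for $2 \times 4$-magic isometries together with condition ($\ast$), and to manufacture the sought magic unitary out of a faithful family of irreducible representations of the given algebra $A$. The decisive feature is that ($\ast$) is stated for \emph{irreducible} representations, which is exactly what lets it survive a passage to an arbitrary quotient of $C^\ast(Q_3)$.

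First I would fix an arbitrary $2 \times 4$-magic isometry $P = (P_{ij})_{i \leq 2, j \leq 4}$ with $A = C^\ast(P_{ij})$. By Proposition \ref{app::prop:2x4_magic_isometry_as_hypercube_algebra} and the universal property of $C^\ast(Q_3)$, there is a surjective $\ast$-homomorphism $q: C^\ast(Q_3) \to A$ with $q(p_{ij}) = P_{ij}$. Then I would pick a separating family $\{\pi_\alpha\}_\alpha$ of irreducible representations $\pi_\alpha: A \to B(\mathcal{H}_\alpha)$ (such a family exists for any C*-algebra, since pure states separate points), so that $\pi := \bigoplus_\alpha \pi_\alpha$ is a faithful representation of $A$ on $\mathcal{H} := \bigoplus_\alpha \mathcal{H}_\alpha$, because $\ker \pi = \bigcap_\alpha \ker \pi_\alpha = \{0\}$.

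The central step is to route each $\pi_\alpha$ through $C(S_4^+)$. Composing with $q$ gives representations $\rho_\alpha := \pi_\alpha \circ q$ of $C^\ast(Q_3)$; since $q$ is surjective one has $\rho_\alpha(C^\ast(Q_3)) = \pi_\alpha(A)$, so $\rho_\alpha$ has the same commutant as $\pi_\alpha$ and is therefore again irreducible. Now condition ($\ast$) applies to each $\rho_\alpha$ and produces a representation $\sigma_\alpha: C(S_4^+) \to B(\mathcal{H}_\alpha)$ with $\rho_\alpha = \sigma_\alpha \circ \varphi$. As $\ast$-homomorphisms are contractive, the direct sum $\sigma := \bigoplus_\alpha \sigma_\alpha$ is a well-defined representation of $C(S_4^+)$ on $\mathcal{H}$, and by construction $\sigma \circ \varphi = \bigoplus_\alpha (\sigma_\alpha \circ \varphi) = \bigoplus_\alpha (\pi_\alpha \circ q) = \pi \circ q$.

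Finally I would set $B := \sigma(C(S_4^+)) \subseteq B(\mathcal{H})$ and $U_{ij} := \sigma(u_{ij})$ for $i, j \leq 4$. Since $\sigma$ is a $\ast$-homomorphism and $(u_{ij})_{i,j\leq 4}$ is a magic unitary in $M_4(C(S_4^+))$, the family $(U_{ij})_{i,j \leq 4}$ is a $4 \times 4$-magic unitary in $M_4(B)$. For the first two rows the identity $\sigma \circ \varphi = \pi \circ q$ gives $U_{ij} = \sigma(u_{ij}) = \sigma(\varphi(p_{ij})) = \pi(q(p_{ij})) = \pi(P_{ij})$, so $\pi$ carries $A$ into $B$ with $P_{ij} \mapsto U_{ij}$; being faithful, $\pi$ is the required embedding $A \hookrightarrow B$, and $P$ has been filled up to $U$. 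I expect the main obstacle to be precisely the handling of an \emph{arbitrary} quotient $A$ rather than the universal algebra $C^\ast(Q_3)$: the argument hinges on the observation that pulling an irreducible representation of $A$ back along the surjection $q$ yields an irreducible representation of $C^\ast(Q_3)$, which is what allows ($\ast$) to be invoked, and which explains why the hypothesis is formulated for irreducible representations in the first place. (Note that no type-I or subhomogeneity assumption on $A$ is needed, only that its irreducible representations separate points.)
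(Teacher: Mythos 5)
Your proof is correct, and it reaches the conclusion by a route that is recognizably different from the paper's, even though both arguments hinge on the same key observation: an irreducible representation of $A$ pulls back along the surjection $q: C^\ast(Q_3) \to A$ to an irreducible representation of $C^\ast(Q_3)$ (same image, hence same commutant), and this is precisely where condition ($\ast$) is applied. The paper proceeds abstractly: it takes $J \subset C(S_4^+)$ to be the closed ideal generated by $\varphi(\ker q)$, sets $B := C(S_4^+)/J$, obtains a candidate map $\psi: A \to B$ from the factorization diagram, and then must \emph{verify} that $\psi$ is injective --- which it does by showing that every irreducible representation of $A$ factors through $\psi$, using ($\ast$) together with the fact that $\sigma$ kills $J$. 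You instead work concretely: you choose a separating family of irreducible representations of $A$, route each one through $C(S_4^+)$ via ($\ast$), form the direct sum $\sigma = \bigoplus_\alpha \sigma_\alpha$, and take $B := \sigma(C(S_4^+)) \subseteq B(\mathcal{H})$, so that the embedding is the faithful representation $\pi = \bigoplus_\alpha \pi_\alpha$ itself and injectivity is built in by the choice of a separating family rather than checked afterwards. Both proofs ultimately rest on the same standard fact (irreducible representations separate points of a C*-algebra); the paper's quotient construction buys a canonical, representation-free description of $B$ as the largest quotient of $C(S_4^+)$ compatible with the relations of $A$, while your construction avoids the ideal bookkeeping and the commuting diagram at the cost of working inside a concrete $B(\mathcal{H})$. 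One detail worth making explicit in your write-up: each $\sigma_\alpha$ is automatically unital, since $\sigma_\alpha(1) = \sigma_\alpha(\varphi(1)) = \rho_\alpha(1) = 1$ by irreducibility of $\rho_\alpha$, so the rows and columns of $(U_{ij})$ indeed sum to the unit $\sigma(1)$ of $B$ and your embedding $\pi$ is unital; this is immediate but it is what makes $(U_{ij})$ a magic unitary in $M_4(B)$ rather than merely a family of projections.
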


Note that the existence and uniqueness of the map $\varphi$ follows from the universal property of $C(S_4^+)$.

\begin{proof}
    Let $(P_{ij})_{i=1,2; j=1,2,3,4} \subset M_{2,4}(A)$ be a magic isometry where $A = C^\ast(P_{ij})$. The elements $P_{ij}$ satisfy the relations \eqref{bga::eq:partition_relation} and \eqref{bga::eq:orthogonality_relation}; therefore there is a unique (surjective) $\ast$-homomorphism $\pi: C^\ast(Q_3) \to A$ with $\pi(p_{ij}) = P_{ij}$ for $i=1,2$ and $j=1,2,3,4$. Let $I := \mathrm{ker}(\pi)$ be the kernel of $\pi$, and let $J \subset C(S_4^+)$ be the closed ideal generated by $\varphi(I)$ with quotient map $\pi_J: C(S_4^+) \to C(S_4^+) / J$. Since $\pi_J \circ \varphi$ vanishes on $I$, it factors through $\pi$ as in the diagram below.
    \begin{center}
        \begin{tikzcd}
            C^\ast(Q_3) \arrow[r, "\varphi"] \arrow[d, "\pi"'] & C(S_4^+) \arrow[d, "\pi_J"] \\
            A \arrow[r, dashed, "\psi"] & C(S_4^+) / J
        \end{tikzcd}
    \end{center}   
    Evidently, the magic isometry $(P_{ij})_{i \leq 2; j \leq 4}$ can be filled up to the magic unitary $(\pi_J(u_{ij}))_{i,j \leq 4}$ if the map $\psi$ is injective.
 
    To prove that this is true, it suffices to show that every irreducible representation $\rho^\prime$ of $A$ factors through $\psi$. But that is implied by ($*$) as follows. If $\rho^\prime: A \to B(\mathcal{H})$ is irreducible, then the same goes for $\rho := \rho^\prime \circ \pi_I$. By ($*$) that representation factors as $\rho = \sigma \circ \varphi$ for a suitable irreducible representation $\sigma: C(S_4^+) \to B(\mathcal{H})$. As $\rho$ vanishes on $I$, $\sigma$ vanishes on $J$ and therefore factors through $\pi_J$ as $\sigma = \sigma^\prime \circ \pi_J$. This means that $\rho^\prime$ factors through $\psi$ as $\rho^\prime = \sigma^\prime \circ \psi$.
\end{proof}

To prove that condition ($\ast$) in Lemma \ref{app::lemma:reformulation_of_quantum_sudoku_fill_up_problem} is true, we combine our knowledge about $C^\ast(Q_3)$ with the faithful representations of $C(S_4^+)$ due to Banica and Collins mentioned above.
Indeed, all we need is a technical computation.

\begin{lemma}
    \label{hyp::lemma:representation_rho_t_factors_thorugh_sigma_x}
    Let $\mathbf{t} = [a^2,b^2,c^2] \in \Delta_2$ with $a, b, c \geq 0$ be given and set
    \begin{align*}
        x := \begin{pmatrix}
            is & t + iu \\
            -t + iu & -is
        \end{pmatrix}
        \in M_2,
    \end{align*}
    where
    \begin{align*}
        s : = \frac{a}{\sqrt{2}\sqrt{c+1}},
        & & t : = \frac{b}{\sqrt{2}\sqrt{c+1}},
        & & u : = \sqrt{\frac{c+1}{2}}.
    \end{align*}
    Then $x \in \mathrm{SU}(2)$ and $\sigma_x \circ \varphi$ is unitarily equivalent to $\rho_\mathbf{t}$. Here $\sigma_x$ is the representation of $C^\ast(S_4^+)$ from the previous theorem and $\rho_\mathbf{t}$ is the representation of $C^\ast(Q_3)$ induced by the weighting $c_\mathbf{t}$ from Definition \ref{weig::def:representation_rho_t}.
\end{lemma}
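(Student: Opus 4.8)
The plan is to verify the two assertions directly by computation, building an explicit intertwining unitary out of the first row of the magic unitary.

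First I would check $x \in \mathrm{SU}(2)$. The matrix $x$ already has the shape $\begin{pmatrix}\alpha & \beta \\ -\bar\beta & \bar\alpha\end{pmatrix}$ with $\alpha = is$ and $\beta = t + iu$, so it suffices to confirm $|\alpha|^2 + |\beta|^2 = s^2 + t^2 + u^2 = 1$. Substituting the given values and using $a^2 + b^2 = 1 - c^2 = (1-c)(1+c)$, the cross term collapses: $s^2 + t^2 = \frac{a^2+b^2}{2(c+1)} = \frac{1-c}{2}$, and adding $u^2 = \frac{c+1}{2}$ gives $1$. Hence $x \in \mathrm{SU}(2)$.

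For the unitary equivalence, recall that $\sigma_x \circ \varphi$ sends each generator $p_{ij}$ to $\Pi(c_i x c_j)$ and acts on $M_2 \cong \C^4$. Since $c_1 = I$ and $x$ is unitary, the four matrices $x c_1, x c_2, x c_3, x c_4$ form an orthogonal frame of $M_2$, so the first-row projections $\Pi(x c_j) = \sigma_x \circ \varphi(p_{1j})$ are exactly the rank-one projections onto this frame. I would therefore define the unitary $W \colon M_2 \to \C^{U_3}$ sending $x c_j / \sqrt 2$ to the standard basis vector $e_x$ indexed by the $U_3$-vertex assigned to $p_{1j}$ in Proposition \ref{app::prop:2x4_magic_isometry_as_hypercube_algebra}. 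By construction $W$ intertwines the first-row projections with $\rho_\mathbf{t}(p_{1j}) = E_{xx}$, leaving the phases $\lambda_j \in S^1$ in $W(x c_j/\sqrt 2) = \lambda_j e_x$ free to be fixed later.

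The heart of the argument is the second row. Writing $M := x^\ast c_2 x$, the identity $c_2 x = x M$ gives $c_2 x c_j = x(M c_j)$, so it remains to expand each $M c_j$ in the Pauli basis and transport it through $W$. The key computation is $M = a\, c_4 + b\, c_3 + c\, c_2$; this is where the specific values of $s,t,u$ earn their keep, since $2su = a$, $2tu = b$ and $u^2 - s^2 - t^2 = c$ precisely because $\mathbf{t} \in \Delta_2$. Decomposing $M c_j$ via the Pauli multiplication rules then yields explicit coordinate vectors for $W(c_2 x c_j / \sqrt 2)$, and I would compare these, up to a common unimodular factor, with the vectors $\psi_{001}, \psi_{010}, \psi_{100}, \psi_{111}$ of Example \ref{hyp::example:representation_rho_t_for_the_cube_Q3} describing $\rho_\mathbf{t}(p_{2j})$. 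Matching moduli fixes the correspondence between columns and antipodal vertex pairs, and matching signs pins down the phases $\lambda_j$ (a choice such as $\lambda = (1,-i,i,-i)$ works). Once the frame vectors agree up to phase, $W$ intertwines the second-row projections as well, and since the $p_{ij}$ generate $C^\ast(Q_3)$ this yields $W (\sigma_x \circ \varphi)(\cdot) W^\ast = \rho_\mathbf{t}(\cdot)$; alternatively one may phrase the phase-matching through Lemma \ref{hyp::lemma:properties_of_admissible_weightings}(2).

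The main obstacle I anticipate is purely the bookkeeping in this last step: keeping the Pauli signs, the factors of $i$, and the $U_3$/$V_3$ vertex labels consistent so that the computed frame agrees with the $\psi_j$ up to a single global phase per column. In particular one must be careful that the column labeling induced by the Pauli matrices $c_j$ is matched (possibly after relabeling columns $3$ and $4$) with the vertex assignment of Proposition \ref{app::prop:2x4_magic_isometry_as_hypercube_algebra}; by contrast, the check $x \in \mathrm{SU}(2)$ and the clean form $M = a\,c_4 + b\,c_3 + c\,c_2$ are the routine parts.
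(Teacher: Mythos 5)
Your proposal is correct and takes essentially the same route as the paper's proof: your intertwiner $W$ built from the frame $(x c_j)_j$ is precisely the paper's passage from $\sigma_x$ to $\sigma_x'(u_{ij}) = \Pi(x^\ast c_i x c_j)$ via left multiplication by $x^\ast$, your identity $x^\ast c_2 x = a\,c_4 + b\,c_3 + c\,c_2$ (resting on $2su=a$, $2tu=b$, $u^2-s^2-t^2=c$) is exactly the paper's computation of the second-row vectors in the Pauli basis, and your phase correction $\mathrm{diag}(1,-i,i,-i)$ is the same diagonal unitary the paper applies before comparing with the vectors $\psi_j$ of Example \ref{hyp::example:representation_rho_t_for_the_cube_Q3}.
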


\begin{proof}
    For $x \in \mathrm{SU}(2)$ it suffices to check that $s^2+t^2+u^2 = 1$. Indeed,
    \begin{align*}
        s^2+t^2+u^2 = \frac{a^2+b^2}{2(c+1)} + \frac{c+1}{2}
            = \frac{a^2 + b^2 + (c+1)^2}{2(c+1)}
            = \frac{1 + 2c + 1}{2(c+1)} = 1.
    \end{align*}
    Next, recall that $\Pi(c_i x c_j)$ is the projection onto $\mathrm{span}(c_i x c_j)$ after identifying $M_2$ with $\C^4$. The obtained representation $\sigma_x$ is unitarily equivalent to $\sigma_x^\prime$ given by 
    \begin{align*}
        \sigma_x^\prime: u_{ij} \mapsto \Pi(x^\ast c_i x c_j),
    \end{align*}
    for $x^\ast$ is a unitary transformation. 
    Indeed, if $X \in M_4$ is any unitary matrix, then the map $\C^4 \cong M_2 \ni A \mapsto XA \in M_2 \cong \C^4$ is a unitary transformation.
    For the identification of $M_2$ and $\C^4$ we may use the Pauli matrices and identify
    \begin{align*}
        x_1 c_1 + x_2 c_2 + x_3 c_3 + x_4 c_4 = \begin{pmatrix}
            x_1 \\ x_2 \\ x_3 \\ x_4
        \end{pmatrix}.
    \end{align*}
    Then a calculation confirms
    \begin{align*}
        x^\ast c_1 x c_4 &= \begin{pmatrix}
            0 \\ 0 \\ 0 \\ 1
        \end{pmatrix},&
        x^\ast c_1 x c_3 &= \begin{pmatrix}
            0 \\ 0 \\ 1 \\ 0
        \end{pmatrix},&
        x^\ast c_1 x c_2 &= \begin{pmatrix}
            0 \\ 1 \\ 0 \\ 0
        \end{pmatrix}, &
        x^\ast c_1 x c_1 &= \begin{pmatrix}
            1 \\ 0 \\ 0 \\ 0
        \end{pmatrix},  \\
        x^\ast c_2 x c_4 &= \begin{pmatrix}
            a \\ ib \\ -ic \\ 0
        \end{pmatrix}, &
        x^\ast c_2 x c_3 &= \begin{pmatrix}
            b \\ -ia \\ 0 \\ ic
        \end{pmatrix},&
        x^\ast c_2 x c_2 &= \begin{pmatrix}
            c \\ 0 \\ ia \\ -ib
        \end{pmatrix}, &
        x^\ast c_2 x c_1 &= \begin{pmatrix}
            0 \\ c \\ b \\ a
        \end{pmatrix},
    \end{align*}
    using
    \begin{align*}
        c = 2u^2 - 1 = u^2 - s^2 - t^2,
        & & b = 2tu, 
        & & a = 2su.
    \end{align*}
    By applying the unitary transformation $U = \mathrm{diag}(1, -i, i, -i)$ the vectors in the second row are transformed into
    \begin{align*}
        \varphi_{24} &= \begin{pmatrix}
            a \\ b \\ c \\ 0
        \end{pmatrix}, &
        \varphi_{23} &= \begin{pmatrix}
            b \\ -a \\ 0 \\ c
        \end{pmatrix}, &
        \varphi_{22} &= \begin{pmatrix}
            c \\ 0 \\ -a \\ -b
        \end{pmatrix}, &
        \varphi_{21} &= \begin{pmatrix}
            0 \\ -ic \\ ib \\ -ia
        \end{pmatrix}.
    \end{align*}
    Thus, a comparison with the vectors $\psi_j$ for $j \in U_3$ from Example \ref{hyp::example:representation_rho_t_for_the_cube_Q3} immediately yields that $\rho_{\mathbf{t}}$ and $\sigma_x$ are unitarily equivalent.
\end{proof}

\begin{proposition}
    \label{hyp::prop:quantum_sudoku_fill_up}
    The statement from Problem \ref{app::problem:2x4_quantum_sudoku_fill_up} is true.
\end{proposition}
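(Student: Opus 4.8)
The plan is to verify condition ($\ast$) of Lemma \ref{app::lemma:reformulation_of_quantum_sudoku_fill_up_problem}, which reduces the entire statement to the following: every irreducible representation $\rho$ of $C^\ast(Q_3)$ admits a factorization $\rho = \sigma \circ \varphi$ with $\sigma$ a genuine representation of $C(S_4^+)$ on the same Hilbert space. By Theorem \ref{irreps::thm:classification_of_irreps} I may assume that $\rho$ is unitarily equivalent to a subrepresentation of $\rho_\mathbf{t}$ for some $\mathbf{t} \in \Delta_2$, and since the coordinates of $\mathbf{t}$ are non-negative I write $\mathbf{t} = [a^2, b^2, c^2]$ with $a, b, c \geq 0$ so that Lemma \ref{hyp::lemma:representation_rho_t_factors_thorugh_sigma_x} becomes applicable. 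The generic case is then immediate: if $\mathbf{t}$ lies in the interior of $\Delta_2$, the weighting $c_\mathbf{t}$ is nowhere-vanishing, $Q_3(c_\mathbf{t}) = Q_3$ is a single sub-hypercube, and $\rho_\mathbf{t}$ is irreducible by Lemma \ref{hyp::lemma:properties_of_admissible_weightings}(1). Hence $\rho = \rho_\mathbf{t}$, and Lemma \ref{hyp::lemma:representation_rho_t_factors_thorugh_sigma_x} supplies some $x \in \mathrm{SU}(2)$ with $\rho \cong \sigma_x \circ \varphi$. This disposes of all four-dimensional irreducible representations.

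The main obstacle is the boundary of $\Delta_2$. As soon as $\mathbf{t}$ has a vanishing coordinate, $\rho_\mathbf{t}$ is no longer irreducible — by Lemma \ref{reps::lemma:decomposition_of_rho_t_for_boundary_points} it splits as $\left(\rho_\mathbf{s} \circ \pi^{(+\ell)}\right) \oplus \left(\rho_\mathbf{s} \circ \pi^{(-\ell)}\right)$ — and $\rho$ is only a \emph{proper} subrepresentation of it. Here the factorization $\rho_\mathbf{t} = \sigma_x \circ \varphi$ does not transfer to $\rho$ for free: the subspace carrying $\rho$ reduces the smaller algebra $\rho_\mathbf{t}(C^\ast(Q_3))$, but it need not reduce the larger algebra $\sigma_x(C(S_4^+))$, so compressing $\sigma_x$ to it yields merely a unital completely positive map rather than a $\ast$-representation. (A direct computation at such a boundary point, where $x = i(u\, c_2 + t\, c_3 + s\, c_4)$ implements a rotation by $\pi$, confirms that $\sigma_x(u_{3j})$ fails to preserve the relevant block decomposition.) Since the reformulation in Lemma \ref{app::lemma:reformulation_of_quantum_sudoku_fill_up_problem} genuinely uses multiplicativity of $\sigma$, I expect to treat the lower-dimensional irreducibles directly rather than through restriction of $\sigma_x$.

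Every such $\rho$ is supported on a proper sub-hypercube of $Q_3$, hence on a square $Q_2$ (the two-dimensional irreducibles, of dimension $2^{2-1}=2$) or on a single edge $Q_1$ (the one-dimensional ones). In the two-dimensional case, the support is a face of $Q_3$; since the antipodal column-pairs of the magic-isometry labelling from Proposition \ref{app::prop:2x4_magic_isometry_as_hypercube_algebra} differ in the coordinate that the face fixes, the two rows of the associated $2 \times 4$ magic isometry occupy disjoint columns, so (after relabelling) $\rho$ corresponds to rows $(P, 1-P, 0, 0)$ and $(0, 0, Q, 1-Q)$ for rank-one projections $P, Q$. Such a matrix completes to the $4 \times 4$ magic unitary
\begin{align*}
    \begin{pmatrix}
        P & 1-P & 0 & 0 \\
        0 & 0 & Q & 1-Q \\
        1-P & P & 0 & 0 \\
        0 & 0 & 1-Q & Q
    \end{pmatrix},
\end{align*}
whose universal property yields a representation $\sigma$ of $C(S_4^+)$ on the given two-dimensional space with $\sigma \circ \varphi = \rho$. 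In the one-dimensional case $\rho$ is determined by a single edge, which forces exactly one non-zero entry in each of the two rows at distinct columns $j_1 \neq j_2$; choosing any permutation $\pi \in S_4$ with $\pi(1) = j_1$ and $\pi(2) = j_2$ gives a character of $C(S_4^+)$, i.e. a classical point of $S_4 \subset S_4^+$, factoring $\rho$. Assembling the interior, two-dimensional, and one-dimensional cases establishes condition ($\ast$), and Lemma \ref{app::lemma:reformulation_of_quantum_sudoku_fill_up_problem} then yields the proposition. The delicate point throughout is precisely the boundary transfer in the second paragraph; everything else is either the already-proved generic factorization or the two explicit finite completions above.
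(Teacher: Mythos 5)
Your proposal is correct, and it is in fact more careful than the paper's own proof. The paper's argument consists of exactly the reduction in your first paragraph: condition ($\ast$) of Lemma \ref{app::lemma:reformulation_of_quantum_sudoku_fill_up_problem} suffices, every irreducible representation is unitarily equivalent to a subrepresentation of some $\rho_\mathbf{t}$ by Theorem \ref{irreps::thm:classification_of_irreps}, and every $\rho_\mathbf{t}$ factors through $\varphi$ by Lemma \ref{hyp::lemma:representation_rho_t_factors_thorugh_sigma_x} --- and there it ends. In other words, the paper silently uses the implication ``a subrepresentation of a representation that factors through $\varphi$ again factors through $\varphi$,'' which is precisely the step you refuse to take for granted. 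You are right to refuse: condition ($\ast$) demands a $\ast$-homomorphism defined on all of $C(S_4^+)$ acting on the same Hilbert space (the proof of Lemma \ref{app::lemma:reformulation_of_quantum_sudoku_fill_up_problem} needs $\sigma$ to be multiplicative in order to conclude that it kills the whole ideal $J$ generated by $\varphi(I)$), the implication fails for general inclusions (a one-dimensional subrepresentation of the identity representation of $M_2$, viewed through the unital inclusion $\C 1 \subset M_2$, does not factor), and compressing $\sigma_x$ to the invariant subspace of a boundary subrepresentation only yields a unital completely positive map; your observation that $\sigma_x(u_{3j})$ breaks the relevant block structure at boundary points is correct and shows that $\sigma_x(C(S_4^+))$ is strictly larger than $\rho_\mathbf{t}(C^\ast(Q_3))$ there, so the failure is real. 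Your handling of the lower-dimensional irreducibles then closes the gap correctly: the two-dimensional ones are supported on faces of $Q_3$, where under the labelling of Proposition \ref{app::prop:2x4_magic_isometry_as_hypercube_algebra} the two rows indeed occupy disjoint column pairs (same-column entries correspond to antipodal vertices, which never lie on a common face; this holds for all six faces), and your two-block completion is a legitimate $4 \times 4$ magic unitary; the one-dimensional ones are supported on edges, where adjacency is equivalent to $j_1 \neq j_2$, so the required permutation character exists. Both routes share the same skeleton, but the paper's proof, read literally, only covers the irreducible representations that are themselves of the form $\rho_\mathbf{t}$ (the four-dimensional ones, coming from interior points of $\Delta_2$), whereas yours establishes condition ($\ast$) in full: what the paper's version buys is brevity, what yours buys is a complete argument with elementary, self-contained constructions for the boundary cases.
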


\begin{proof}
    By Lemma \ref{app::lemma:reformulation_of_quantum_sudoku_fill_up_problem} it suffices to show that every irreducible representation $\rho: C^\ast(Q_3) \to B(\mathcal{H})$ factors through $\varphi$.
    However, by Theorem \ref{irreps::thm:classification_of_irreps} every irreducible representation of $C^\ast(Q_3)$ is unitarily equivalent to a subrepresentation of some $\rho_{\mathbf{t}}$ with $\mathbf{t} \in \Delta_2$. Further, by Lemma \ref{hyp::lemma:representation_rho_t_factors_thorugh_sigma_x} every $\rho_{\mathbf{t}}$ factors through $\varphi$. 
\end{proof}

\printbibliography

\end{document}